\newcommand{\C}{\mathbb{C}}
\newcommand\res{\mathop{\hbox{\vrule height 7pt width .5pt depth 0pt
\vrule height .5pt width 6pt depth 0pt}}\nolimits}
\definecolor{green}{rgb}{0,.5,0}
\newcommand\eps{\varepsilon}
\renewcommand\div{\mathrm{div\,}}
\newcommand\sym{\mathrm{sym}}
\newcommand\loc{\mathrm{loc}}
\newcommand\dist{\mathrm{dist}}
\newcommand\R{\mathbb{R}}
\newcommand\N{\mathbb{N}}
\newcommand\calH{\mathcal{H}}
\newcommand{\calL}{\mathcal{L}}
\newtheorem{theorem}{Theorem}[section]
\newtheorem{proposition}[theorem]{Proposition}
\newtheorem{lemma}[theorem]{Lemma}
\newtheorem{remark}[theorem]{Remark}
\newtheorem{corollary}[theorem]{Corollary}
\numberwithin{equation}{section}
\newcounter{Nummer}
\newcommand{\Fh}{F_h}
\newcommand{\FFh}{\mathscr{F}_h}
\newcommand{\Finf}{F_\infty}
\newcommand{\FFinf}{\mathscr{F}_\infty}
\newcommand{\Exc}{\mathscr{E}}
\newcommand{\Fz}{\mathscr{F}_{\mu,0}}
\newcommand{\FF}{\mathscr{F}_{\mu,\cost}}
\newcommand{\cost}{\kappa}
\newcommand{\zetaq}{\zeta^{p}}
\newcommand{\zetaqu}{\zeta^{p-1}}
\begin{document}
\begin{center}
  {\Large A note on the {H}ausdorff dimension of the singular set of solutions to elasticity type systems
%Partial regularity for vectorial elliptic problems of the elasticity type
}\\[5mm]
{\today}\\[5mm]
Sergio Conti$^{1}$, Matteo Focardi$^{2}$, and Flaviana Iurlano$^{3}$\\[2mm]
{\em $^{1}$
 Institut f\"ur Angewandte Mathematik,
Universit\"at Bonn\\ 53115 Bonn, Germany}\\[1mm]
{\em $^{2}$ DiMaI, Universit\`a di Firenze\\ 50134 Firenze, Italy}\\[1mm]
{\em $^{3}$ Laboratoire Jacques-Louis Lions, Sorbonne Université (Paris 6)\\ 75005 Paris, France}\\[3mm]
    \begin{minipage}[c]{0.8\textwidth}
    \end{minipage}
\end{center}
\begin{abstract}
We prove partial regularity for minimizers to elasticity type energies   in the nonlinear framework {with $p$-growth, $p>1$,}
in dimension $n\geq 3$. It is an open problem in such a setting either to establish full regularity 
or to provide counterexamples. In particular, we give an estimate on the Hausdorff dimension of 
the potential singular set by proving that is strictly less than 
{$n-(p^*\wedge 2)$, and actually $n-2$ in the autonomous case} (full regularity is well-known in dimension $2$). 

The latter result is instrumental to establish existence for the strong formulation of Griffith 
type models in brittle fracture with nonlinear constitutive relations, accounting for damage and 
plasticity in space dimensions $2$ and $3$.
\end{abstract}
\section{Introduction}
In this paper we investigate partial regularity of local minimizers for a class of energies whose 
prototype is 
\begin{equation*}%\label{eqgriffintro2}
  \int_{\Omega} \frac 1p\left(\big(\mathbb{C}e(u)\cdot e(u)+\mu\big)^{\sfrac p2}-\mu^{\sfrac p2}\right)dx
  +\cost\int_{\Omega} |u(x)-g(x)|^p\,dx 
\end{equation*}
for $u\in W^{1,p}(\Omega;\R^n)$, $\Omega\subset\R^n$ bounded and open, $p\in(1,\infty)$ (see below 
for the precise assumptions on the relevant quantities). In addition, we establish an estimate on 
the Hausdorff dimension of the related singular set. 

The main motivations for our study arise from Griffith's variational approach to brittle fracture. 
In such a model the equilibrium state of an elastic solid body deformed by external forces is determined by 
the minimization of an energy in which a bulk term and a surface one are in competition 
(see \cite{FrancfortMarigo1998,BourdinFrancfortMarigo2008,dm-toa}). The former represents the 
elastic stored energy in the uncracked part of the  body, instead the latter is related to the energy 
spent to create a crack, and it is typically proportional to the measure of the crack surface itself.
As a model case, for $p\in(1,\infty)$ and $\kappa,\, \mu\geq 0$ one looks for minimizers $(\Gamma,u)$ of
\begin{equation}\label{eqgriffintro}
 E[\Gamma,u]:= \int_{\Omega\setminus\Gamma} 
 \frac 1p\left(\big(\mathbb{C}e(u)\cdot e(u)+\mu\big)^{\sfrac p2}-\mu^{\sfrac p2}\right)\,dx
 +\kappa\int_{\Omega\setminus\Gamma} |u(x)-g(x)|^p\,dx 
 + 2\beta\calH^{n-1}({\Gamma\cap\Omega})
\end{equation}
over all closed 
sets $\Gamma\subset{\overline\Omega}$ and all deformations $u\in C^1(\Omega\setminus\Gamma;{\R^n})$ subject 
to suitable boundary and irreversibility conditions. Here $\Omega\subset\R^n$ is the reference configuration, 
the function $\kappa|\xi-g(x)|^p\in C^0(\Omega\times \R^n)$ represents external volume forces, 
$e(u)=(\nabla u+\nabla u^T)/2$ is the elastic strain, $\C\in \R^{(n\times n)\times (n\times n)}$ is the matrix 
of elastic coefficients, $\beta>0$ the surface energy.
More precisely, the energy in \eqref{eqgriffintro} for $p=2$ corresponds to classical Griffith's fracture model, 
while densities having $p$-growth with $p\neq 2$ may be instrumental for a variational formulation of fracture 
with nonlinear constitutive relations, accounting for damage and plasticity (see for example 
\cite[Sections~10-11]{hutchinson1989course} and references therein).

In their seminal work \cite{DegiorgiCarrieroLeaci1989}, De~Giorgi, Carriero and Leaci have introduced a viable 
strategy to prove existence of minimizers for the corresponding scalar energy, 
\begin{equation}\label{eqgMSintro}
 E_\mathrm{MS}[\Gamma,u]:=\int_{\Omega\setminus\Gamma} \frac1p\Big(\big(|Du|^2+\mu\big)^{\sfrac p2}-\mu^{\sfrac p2}\Big)\,dx
 +\cost\int_{\Omega\setminus\Gamma}|u(x)-g(x)|^p\,dx 
 + 2\beta\calH^{n-1}(\Gamma{\cap\Omega}) \,,
\end{equation}
better known for $p=2$ as the Mumford and Shah functional in image segmentation (cf. the book \cite{AmbrosioFuscoPallara} 
for more details on the Mumford and Shah model and related ones). 
From a mechanical perspective the scalar setting matches the case of anti-plane deformations 
$u:\Omega\setminus\Gamma\to\R$.
Following a customary idea in the Calculus of Variations, the functional $E_\mathrm{MS}$ is first relaxed in a wider space, 
so that existence of minimizers can be obtained. 
The appropriate functional setting in the scalar framework is provided by a suitable subspace of $BV$ functions.
Surface discontinuities in the distributional derivative of the deformation $u$ are then allowed, they are concentrated 
on a $(n-1)$-dimensional (rectifiable) set $S_u$. Then, existence for the strong formulation 
is recovered by establishing a mild regularity result for minimizers $u$ of the weak counterpart: the essential 
closedness of the jump set $S_u$, namely $\calH^{n-1}(\Omega\cap\overline{S_u}\setminus S_u)=0$, complemented with
smoothness of $u$ on $\Omega\setminus\overline{S_u}$. 
Given this, $(u,\overline{S_u})$ turns out to be a minimizing couple for \eqref{eqgriffintro}.

In the approach developed by De~Giorgi, Carriero and Leaci in \cite{DegiorgiCarrieroLeaci1989},
regularity issues for local minimizers of the restriction of $E_\mathrm{MS}$ in \eqref{eqgMSintro} to Sobolev functions,
such as decay properties of the $L^p$ norm of the corresponding gradient, play a key role for establishing both the essential 
closedness of $S_u$ for a minimizer $u$ of \eqref{eqgriffintro} and the smoothness of $u$ itself on $\Omega\setminus\overline{S_u}$. 
Nowadays, these are standard subjects in elliptic regularity theory (cf. for instance the books 
\cite{Giaquinta83,Giusti,GiaquintaMartinazzi2012}).

Following such a streamline of ideas, in a recent paper \cite{ContiFocardiIurlano17} we have proved existence in the 
two dimensional framework for the functional in \eqref{eqgriffintro} for suitably regular $g$ (see also 
\cite{ContiFocardiIurlano2016-CRAS} that settles the case $p=2$). 
In passing we mention that the domain of the relaxed 
functional is provided for the current problem by a suitable subset, $SBD$ (actually $GSBD$), of the space 
$BD$ ($GBD$) of functions with (generalized) bounded deformation (we omit the precise definitions since they are 
inessential for the purposes of the current paper and rather refer to \cite{ContiFocardiIurlano17,ChambolleContiIurlano2017}).
More in details, our modification of the De~Giorgi, Carriero, and Leaci approach rests on three main ingredients:
the compactness and the asymptotic analysis of sequences in $SBD$ having vanishing jump energy; 
the approximation in energy of general $(G)SBD$ maps with more regular ones; and the decay and 
smoothness properties of local minimizers of the functional in \eqref{eqgriffintro} when restricted to Sobolev functions.
The compactness issue is dealt with in \cite{ContiFocardiIurlano17} in the two dimensional case {and in \cite{ChambolleContiIurlano2017} in higher dimensions, 
{in both papers for all $p>1$.}} 
The asymptotic analysis {is performed in \cite{ContiFocardiIurlano17} and} holds without dimensional limitations. The approximation property holds in any dimension 
as well, it is established in the companion paper \cite{ContiFocardiIurlanoGBD}.
Instead, the regularity properties of local minimizers of energies like
\begin{equation}\label{eqgriffintro2}
  \int_{\Omega} \frac 1p\left(\big(\mathbb{C}e(u)\cdot e(u)+\mu\big)^{\sfrac p2}-\mu^{\sfrac p2}\right)dx+
  \cost\int_{\Omega} |u(x)-g(x)|^p\,dx 
\end{equation}
on $W^{1,p}(\Omega;\R^n)$ are the object of investigation in the current paper. 
More generally, we study smoothness of local minimizers of elastic-type energies 
\begin{equation}\label{introFF2}
\FF(u)=\int_\Omega f_\mu(e(u))\,dx+\cost\int_{\Omega}|u-g|^pdx,
\end{equation}
on $W^{1,p}(\Omega;\R^n)$, $n\geq 2$, for $f_\mu$ satisfying suitable convexity, smoothness and growth conditions 
(see Section~\ref{s:hpfmu} for the details). {We carry over the analysis in any dimension
since the results of the current paper, {together with
the compactness property established in \cite{ChambolleContiIurlano2017} mentioned above, imply} 
%permit to conclude 
a corresponding existence result for the minimizers of \eqref{introFF2} in the physical dimension $n=3$, for any $p>1$, and for $\mu>0$}. In this respect it is essential for us to derive an estimate on the Hausdorff dimension 
of the (potential) singular set, and prove that it is strictly less than $n-1$. {We recall that if $p=2$ the regularity properties of the aforementioned local minimizers are well-known, so that the corresponding existence result for the minimizers of \eqref{introFF2} follows straightforwardly {from \cite{ContiFocardiIurlano17} in dimension $n=2$ and from 
\cite{ChambolleContiIurlano2017} in any dimension.}}

The starting point of our study is the equilibrium system satisfied by minimizers of \eqref{introFF2} that reads as 
\begin{equation}\label{introFF}
-\div(\nabla f_\mu(e(u)))+\cost p|u-g|^{p-2}(u-g)=0,
\end{equation}
in the distributional sense on $\Omega$.
Variants of \eqref{introFF} have been largely studied in fluid dynamics (we refer to the monograph \cite{FuchsSeregin}
for all the details). In this context the system \eqref{introFF} with $\kappa=0$ is coupled with a divergence-free constraint 
and represents a stationary generalized Stokes system. It describes a steady flow of a fluid when the velocity $u$ is 
small and the convection can be neglected. 
To our knowledge all contributions present in literature and concerning \eqref{introFF} are in this framework, 
apart from the case $p=2$ which is classical, see for example \cite{Giaquinta83,GiaquintaMartinazzi2012,Morrey-Multiple}.

Under the divergence-free constraint and $\kappa=0$, regularity of solutions has been established first for 
$p\geq2$ and every $\mu\geq0$, see \cite{FuchsSeregin,BreitDieningFuchs}, then for $1<p<2$ and every $\mu\geq0$ 
in the planar setting, see \cite{BildhauerFuchs,BildhauerFuchsZhong05,DieningKaplickySchwarzacher14} 
(the first two papers actually deal with the more general case of integrands satisfying $p-q$ growth conditions, 
the latter with the case of growth in terms of $N$-functions). $L^q$ estimates for \eqref{introFF} in the $3$-dimensional setting 
have been obtained in \cite{DieningKaplicky} for every $\mu\geq0$. 
Regularity up to the boundary for the second derivative of solutions is proved for $p>2$ and $\mu>0$ in 
\cite{BeiraoCrispo}. 

We stress explicitly that we have not been able to find in literature the mentioned estimate on the Hausdorff dimension of 
the singular set. Moreover, we also point out that the special structure of our lower order term does not fit the usual 
assumptions in literature (see for instance \cite[Theorem~1.2]{KristensenMingione} in the case of the $p$-laplacian).
Despite this, it is possible to extend the results of this paper to a wider class of energies, as those satisfying
for instance the conditions \cite[(1.1)-(1.2)]{KristensenMingione} building upon the ideas and techniques developed in 
\cite{KristensenMingione,KristensenMingione05,KristensenMingione07} (see also \cite{Mingione08} for a complete report).

{In conclusion, 
%since we were not able to find the exact statements needed for our purposes in the current literature, 
we provide here} detailed proofs for the decay estimates (with $\kappa,\mu\geq0$, see Proposition~\ref{p:decayVmu} and 
Corollary~\ref{c:decayVmu bis}) and for full or partial regularity of solutions (the former for $n=2$, the latter for 
$n\geq 3$ and $\mu>0$, see Section~\ref{ss:enhanced}). 
We stress that if $n\geq 3$ it is a major open problem to prove or disprove full regularity
even in the non degenerate, i.e.~$\mu>0$, symmetrized $p$-laplacian case for $p\neq 2$. In these regards, if $n\geq 3$ 
we provide an estimate of the Hausdorff dimension of the potential singular set that seems to have been overlooked 
in the literature. In particular, the potential singular set has dimension strictly less than {$n-1$}.
%this being particularly relevant in view of the possible extension of \cite{ContiFocardiIurlano17} 
% in higher dimensions.

Finally, we resume briefly the structure of the paper. In Section~\ref{ss:prelim} we introduce the notation and the 
(standard) assumptions on the class of integrands $f_\mu$. We also recall the basic properties of the nonlinear potential 
$V_\mu$, an auxiliary function commonly employed in literature for regularity results in the non quadratic case. In addition, 
we review the framework of shifted $N$-functions introduced in \cite{DieningEttwein}, that provides the right technical tool 
for deriving Caccioppoli's type inequalities for energies depending on the symmetrized gradient. Caccioppoli's inequalities 
are the content of Section~\ref{ss:caccioppoli}, as a consequence of those in Section~\ref{s:decay} we derive the mentioned 
decay properties of the $L^2$ norm of $V_\mu(e(u))$. 
We remark that the Morrey type estimates in Section~\ref{s:decay} and the improvement in Corollary~\ref{c:decayVmu bis} 
are helpful for the purposes of \cite{ContiFocardiIurlano17,ChambolleContiIurlano2017} only for $n\in\{2,3\}$ in view of the decay rate established 
there. Partial regularity with an estimate on the Hausdorff dimension of the singular set are the objects of 
Section~\ref{ss:enhanced}.
More precisely, the higher integrability of $V_\mu(e(u))$ is addressed in Section~\ref{ss:HI}, from this the full
regularity of local minimizers in the two dimensional case easily follows by Sobolev embedding (cf. Section~\ref{ss:2d}).
Section~\ref{ss:partial autonomous} deals with %partial regularity of minimizers and the Hausdorff dimension estimate in 
the autonomous case $\cost=0$, for which we use a linearization argument in the spirit of vectorial regularity results (the 
needed technicalities for these purposes are collected in Appendix~\ref{a:technical}). 
The non-autonomous case is then a consequence of a perturbative approach as in the classical paper \cite{GiaquintaGiusti83} 
(see Section~\ref{ss:partial non-autonomous}).

\section{Preliminaries}\label{ss:prelim}

With $\Omega$ we denote an open and bounded Lipschitz set in $\R^n$, $n\geq 2$. The Euclidean scalar product is
indicated by $\langle \cdot,\cdot\rangle$.
We use standard notation for Lebesgue and Sobolev spaces. By $s^\ast$ we denote the Sobolev exponent 
of $s$ if $s\in[1,n)$, otherwise it can be any positive number strictly bigger than $n$.
If $w\in L^1(B;\R^n)$, $B\subseteq\Omega$, we set
\begin{equation}\label{e:media}
 (w)_B:=\fint_B w(y)dy.
\end{equation}
% In particular, if $B=B_{2r}(x_0)\setminus B_{r}(x_0)$, we use the short hand notation 
% \begin{equation}\label{e:media annulus}
% (w)_{x_0,r}:=\fint_{B_{2r}(x_0)\setminus B_{r}(x_0)} w(y)dy.
% \end{equation}
In what follows we shall use the standard notation for difference quotients
\begin{equation}\label{e:rapporti incrementali}
 \triangle_{s,h}v(x):=\frac{1}{h}(v(x+h\epsilon_s)-v(x)),\quad
  \tau_{s,h}v(x):=h\,\triangle_{s,h}v(x),%=v(x+h\epsilon_s)-v(x),
\end{equation}
if $x\in\Omega_{s,h}:=\{x\in\Omega:\,x+h\epsilon_s\in\Omega\}$ and $0$ otherwise in $\Omega$,
where $v:\Omega\to\R^n$ is any measurable map and $\epsilon_s$ is any coordinate unit vector 
of $\R^n$.

\subsection{Assumptions on the integrand}\label{s:hpfmu}

For given $\mu\geq 0$ and $p>1$ we consider a function $f_\mu:\R^{n\times n}_\sym\to\R$ satisfying
\begin{itemize}

\item[(Reg)] $f_\mu\in C^2(\R^{n\times n}_\sym)$ if $p\in(1,2)$ and $\mu>0$ or $p\in[2,\infty)$ 
and $\mu\geq 0$, while $f_0\in C^1(\R^{n\times n}_\sym)\cap C^2(\R^{n\times n}_\sym\setminus\{0\})$ 
if $p\in(1,2)$;

\item[(Conv)] for all $p\in(1,\infty)$ and for all symmetric matrices 
$\xi$ and $\eta\in\R^{n\times n}_\sym$ we have
\begin{equation}\label{e:bdhessf}
\frac 1c\big(\mu+|\xi|^2\big)^{\sfrac p2-1}|\eta|^2
\leq\langle\nabla^2 f_\mu(\xi)\eta,\eta\rangle\leq 
c\big(\mu+|\xi|^2\big)^{\sfrac p2-1}|\eta|^2,
\end{equation}
with $c=c(p)>0$, unless $\mu=|\xi|=0$ and $p\in(1,2)$. {We further assume $f_\mu(0)=0$ and $Df_\mu(0)=0$.}
 \end{itemize}

\begin{remark}
The prototype functions we have in mind for applications to the mentioned Griffith fracture 
model are defined by 
\begin{equation}\label{e:fmu}
f_\mu(\xi)=\frac 1p\left(\big(\mathbb{C}\xi\cdot\xi+\mu\big)^{\sfrac p2}-\mu^{\sfrac p2}\right),
\end{equation}
for all $\mu\geq 0$ and $p\in(1,\infty)$. Clearly (Reg) is satisfied, moreover we have
\[%\begin{equation}\label{e:gradf}
 \nabla f_\mu(\xi)=\big(\mathbb{C}\xi\cdot\xi+\mu\big)^{\sfrac p2-1}\mathbb{C}\xi
\]%\end{equation}
(with $\nabla f_0(0)=0$), and in addition 
\begin{equation}\label{e:hessf}
 \nabla^2 f_\mu(\xi)=\big(\mathbb{C}\xi\cdot\xi+\mu\big)^{\sfrac p2-2}
 \Big((p-2)\mathbb{C}\xi\otimes \mathbb{C}\xi+(\mathbb{C}\xi\cdot\xi+\mu)\mathbb{C}\Big)
\end{equation}
(with $\nabla^2 f_0(0)=0$ if $p\in(2,\infty)$, $\nabla^2f_0(0)=\C$ if $p=2$, $\nabla^2f_0(0)$ 
undefined if $p\in (1,2)$). The lower inequality in \eqref{e:bdhessf} is clearly satisfied for $p\in[2,\infty)$; 
to check it if $p\in(1,2)$ consider the quantity
\begin{equation*}
\alpha:= (p-2)(\C\xi\cdot \eta)^2 + (\C\xi\cdot\xi+\mu)(\C\eta\cdot\eta).
\end{equation*}
Since $\C$ defines a scalar product on the space of symmetric matrices, Cauchy-Schwarz inequality
\[
\C\xi\cdot \eta \le (\C\xi\cdot\xi)^{1/2}
(\C\eta\cdot\eta)^{1/2}
\]
yields for $p\in(1,2)$ 
\begin{equation*}
\alpha\ge [(p-2)(\C\xi\cdot \xi) + (\C\xi\cdot\xi+\mu)](\C\eta\cdot\eta)
\ge (p-1) (\C\xi\cdot\xi+\mu)(\C\eta\cdot\eta),
\end{equation*}
the other inequality in \eqref{e:bdhessf} can be proved analogously.
\end{remark}

Note that from (Conv) we deduce the $p$-growth conditions
\begin{equation}\label{e:growth condition}
{c^{-1}(|\xi|^{2}+\mu)^{\sfrac p2-1}|\xi|^2 \leq f_\mu(\xi)\leq c(|\xi|^{2}+\mu)^{\sfrac p2-1}|\xi|^2 }
\end{equation}
and
\begin{equation}\label{e:growth condition nabla}
|\nabla f_\mu(\xi)|\leq c{  (|\xi|^{2}+\mu)^{\sfrac p2-1} |\xi|}
\end{equation}
for all $\xi\in\R^{n\times n}_\sym$ with $c={c(p)}>0$ {(see also Lemma \ref{l:tecnico} below).}
Therefore, for all $\cost,\,\mu\geq0$, the functional $\FF:W^{1,p}(\Omega;\R^n)\to\R$ given by 
\begin{equation}\label{e:FF}
\FF(v)=\int_\Omega f_\mu(e(v))dx+\cost\int_{\Omega}|v-g|^pdx
\end{equation}
is well-defined.

\subsection{The nonlinear potential $V_\mu$}

In what follows it will also be convenient to introduce the auxiliary function
$V_\mu:\R^{n\times n}\to\R^{n\times n}$,
\[
 V_\mu(\xi):=(\mu+|\xi|^2)^{\sfrac{(p-2)}{4}}\xi,
\]
with $V_0(0)=0$ {(we do not highlight the $p$ dependence for the sake of simplicity).}
\begin{remark}\label{r:Vmu}
 Note that $|V_0(\xi)|^2=|\xi|^p$ for every $\xi\in\R^{n\times n}$, and for all $\mu>0$
\begin{equation}\label{e:stima banana}
  |V_\mu(\xi)|^2\leq(\mu+|\xi|^2)^{\sfrac p2}= |V_\mu(\xi)|^2+\mu(\mu+|\xi|^2)^{\sfrac p2-1}
 { \le c|V_\mu(\xi)|^2+c\,\mu^{\sfrac p2}}
\end{equation}
%A direct calculation (separating the regions $|w|^2\le \mu$ and $|w|^2>\mu$ if $p\ge 2$)
%shows that
%\begin{equation}\label{e:stima banana}
% \|V_\mu(w)\|^2_{L^2(A)}\leq\int_A(\mu+|w|^2)^{\sfrac p2}dx\leq c\,\|V_\mu(w)\|^2_{L^2(A)}
% +c\,\mu^{\sfrac p2}{\cal L}^n(A)
%\end{equation}
with $c=c(p)>0$.%, for all $w\in L^p(A,\R^{n\times n})$.
\end{remark}
% note: the constant does not depend on n, one is 2 and the other 2^(p/2) 
%(pf: distinguish \mu<|\xi|^2 and the other case).

The following two basic lemmas will be needed in this section 
(see \cite[Lemma~2.1 and Lemma~2.2]{AcerbiFusco89} {and \cite[Lemma 8.3]{Giusti}} for more details). 
%{Giusti dimostra la versione con $(1-t)^r$ che ci serve nel lemma 4.7 FI}
\begin{lemma}\label{l:tecnico}
 For every $\gamma>-\sfrac12$, {$r\geq 0$}, and $\mu\geq 0$ we have
 \begin{equation}\label{e:tecnica}
  c_1\leq\frac{\int_0^1\big(\mu+|\eta+t(\xi-\eta)|^2\big)^\gamma{(1-t)^r} dt}{(\mu+|\xi|^2+|\eta|^2)^\gamma}
  \leq c_2,
 \end{equation}
for all $\xi$, $\eta\in\R^k$ such that $\mu+|\xi|^2+|\eta|^2\ne 0$, with $c_i=c_i(\gamma,r)>0$.
\end{lemma}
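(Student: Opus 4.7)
My plan is to prove the two inequalities in \eqref{e:tecnica} as separate cases depending on the sign of $\gamma$, setting $R^2:=\mu+|\xi|^2+|\eta|^2$ and $v(t):=\eta+t(\xi-\eta)$ throughout. The starting point is the elementary one-sided bound $|v(t)|\le|\eta|+|\xi-\eta|\le 2|\eta|+|\xi|$, which yields $\mu+|v(t)|^2\le C(\mu+|\xi|^2+|\eta|^2)=CR^2$ uniformly in $t\in[0,1]$. Multiplying by $(1-t)^r$ and integrating (recall $\int_0^1(1-t)^r\,dt=\frac{1}{r+1}$) immediately gives the upper bound in \eqref{e:tecnica} when $\gamma\ge0$ and the lower bound when $\gamma\le0$.

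For the complementary lower bound when $\gamma\ge0$, I would exhibit an explicit subinterval $I\subset[0,1]$ on which simultaneously $\mu+|v(t)|^2\ge cR^2$ and $(1-t)^r\ge c'(r)>0$, doing a three-case analysis according to which of $\mu,|\eta|^2,|\xi|^2$ dominates $R^2$. If $\mu\ge R^2/3$ any subinterval works. If $|\eta|^2\ge R^2/3$, a continuity argument combined with the explicit expansion $s(t)=\mu+|\eta|^2+2t\langle\eta,\xi-\eta\rangle+t^2|\xi-\eta|^2$ shows $s(t)\ge s(0)/2$ for $t\in[0,1/8]$, where $(1-t)^r$ is bounded below by $(7/8)^r$. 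The case $|\xi|^2\ge R^2/3$ is the mildly delicate one: the natural interval is $[7/8,1]$, and although $(1-t)^r$ is small there, its integral $(1/8)^{r+1}/(r+1)$ is a positive constant depending only on $r$, which suffices.

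The main obstacle is the upper bound when $-1/2<\gamma<0$, since the integrand may blow up where the segment $[\eta,\xi]$ passes near the origin. Here I would exploit the exact identity
\[
\mu+|v(t)|^2=\beta^2+a^2(t-t_0)^2,\qquad a:=|\xi-\eta|,\quad \beta^2:=\mu+\rho^2,
\]
where $\rho$ is the distance of the origin from the affine line through $\eta$ and $\xi$, and $t_0$ is the corresponding projection parameter. Bounding $(1-t)^r\le 1$ and substituting $u=a(t-t_0)/\beta$ converts the integral into $\frac{\beta^{2\gamma+1}}{a}\int_I(1+u^2)^\gamma\,du$; the hypothesis $\gamma>-1/2$ is exactly the one making the integrand $(1+u^2)^\gamma$ behave like $|u|^{2\gamma}$ with $2\gamma+1>0$, so the truncated integral is estimated by $C(\gamma)(1+L^{2\gamma+1})$ with $L=a\max(|t_0|,|1-t_0|)/\beta$. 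The identity $R^2=2\beta^2-\mu+a^2(t_0^2+(1-t_0)^2)$ then allows me to compare the resulting expression with $R^{2\gamma}$ uniformly, treating separately the subcases $a\lesssim\beta$ (so $L\lesssim 1$ and the estimate collapses to $\beta^{2\gamma}\le CR^{2\gamma}$ when combined with $\beta\ge cR$ forced by the subcase assumption) and $a\gtrsim\beta$ (so the $L^{2\gamma+1}$ term dominates and, after simplification, yields $a^{2\gamma}\max(|t_0|,|1-t_0|)^{2\gamma+1}\le CR^{2\gamma}$). Organising this case split cleanly—rather than the underlying calculation—is the real work.
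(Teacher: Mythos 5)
The easy half is handled soundly: your one-sided bound on $|v(t)|$ gives both the $\gamma\ge0$ upper bound and the $\gamma\le0$ lower bound, and your three-case split on which of $\mu,|\eta|^2,|\xi|^2$ dominates $R^2$ together with the intervals $[0,\sfrac18]$ and $[\sfrac78,1]$ delivers the $\gamma\ge0$ lower bound; the paper argues similarly (treating $|\xi|\le|\eta|$ on $[0,\sfrac13]$) but simply cites \cite[Lemma~2.1]{AcerbiFusco89} for the remaining upper bound rather than reproducing the computation.

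The hard case, the upper bound for $-\sfrac12<\gamma<0$, contains a genuine gap. The inequality $\int_I(1+u^2)^\gamma\,du\le C(\gamma)(1+L^{2\gamma+1})$ is true, but it is the bound for the symmetric interval $[-L,L]$ and is far too lossy when $I$ is short and sits far from the origin. Take $\mu=0$, $\eta=(1,\delta)$, $\xi=(1+2\delta,\delta)$ with $\delta\downarrow0$: then $a=2\delta$, $\rho=\beta=\delta$, $t_0=-\sfrac1{2\delta}$, $M:=\max(|t_0|,|1-t_0|)\approx\sfrac1{2\delta}$, $L\approx\sfrac1\delta$, and $R^2\approx2$. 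Your subcase $a\gtrsim\beta$ applies, yet the claimed conclusion $a^{2\gamma}M^{2\gamma+1}\le CR^{2\gamma}$ reads $\sfrac1{2\delta}\le C\,2^\gamma$, which fails. (The integral itself is of course fine: the integrand is $\approx(1-t)^r$ on all of $[0,1]$; the problem is that $\int_I(1+u^2)^\gamma\,du$ is of order $\delta^{-2\gamma}$ while $C(1+L^{2\gamma+1})$ is of order $\delta^{-2\gamma-1}$, and the extra $\delta^{-1}$ survives the prefactor $\beta^{2\gamma+1}/a$.) The same example with $a=\delta/2$ shows $a\lesssim\beta$ forces neither $L\lesssim1$ nor $\beta\ge cR$, so the dichotomy on $a$ versus $\beta$ is not the right one. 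The correct split is on $t_0$: if $t_0\notin[-1,2]$ then $\min_{t\in[0,1]}(t-t_0)^2\ge\tfrac15\big(t_0^2+(1-t_0)^2\big)$, hence $\mu+|v(t)|^2\ge cR^2$ pointwise on $[0,1]$ and the upper bound is immediate without any integral estimate; if $t_0\in[-1,2]$ then $M\le2$, so $L\le2a/\beta$, and combining $\int_I\le\min\{a/\beta,\ C(1+L^{2\gamma+1})\}$ with $R^2\le 2\beta^2+5a^2$ lets the argument you outline go through.
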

\begin{proof}
If $\gamma\geq 0$ the upper bound follows easily by $|\eta+t(\xi-\eta)|^2\le |\eta|^2+|\xi|^2$ and the
monotonicity of $(0,\infty)\ni s\mapsto(\mu+ s)^\gamma$ with $c_2=1$.
To prove the lower bound we observe that  if $|\xi|\leq|\eta|$ then 
\[
|\eta+t(\xi-\eta)|\geq |\eta|-t|\xi|-t|\eta| \ge \frac13|\eta|\qquad \forall t\in[0,\sfrac 13],
\]
which implies  the other inequality with $c_1=c_1(\gamma)$. 

The lower bound for $\gamma<0$ is analogous to the previous upper bound. 
The remaining upper bound requires an explicit computation and the integrability
assumption $\gamma>-\sfrac12$, see \cite[Lemma~2.1]{AcerbiFusco89}, which results in
$c_2=\sfrac 8{(2\gamma+1)}$. 
\end{proof}
\begin{lemma}\label{l:tecnico2}
 For every $\gamma>-\sfrac12$ and $\mu\geq 0$ we have
 \begin{equation}\label{e:tecnica2}
  c_3|\xi-\eta|\leq\frac{|(\mu+|\xi|^2)^\gamma\xi-(\mu+|\eta|^2)^\gamma\eta|}
  {(\mu+|\xi|^2+|\eta|^2)^\gamma}\leq c_4\,|\xi-\eta|,
 \end{equation}
 for all $\xi$, $\eta\in\R^n$ such that $\mu+|\xi|^2+|\eta|^2\ne 0$, with $c_i=c_i(\gamma)>0$.
 \end{lemma}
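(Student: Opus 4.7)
\textbf{Proof plan for Lemma \ref{l:tecnico2}.} The natural strategy is to recognize the numerator as $F(\xi)-F(\eta)$ for the vector field
\[
F(\zeta) := (\mu+|\zeta|^2)^{\gamma}\zeta,
\]
and to interpolate via the fundamental theorem of calculus
\[
F(\xi)-F(\eta) = \int_0^1 \nabla F\bigl(\eta+t(\xi-\eta)\bigr)(\xi-\eta)\,dt.
\]
A direct computation gives
\[
\nabla F(\zeta) = (\mu+|\zeta|^2)^{\gamma}\mathrm{Id} + 2\gamma(\mu+|\zeta|^2)^{\gamma-1}\,\zeta\otimes\zeta,
\]
so the operator norm of $\nabla F(\zeta)$ is controlled by $(1+2|\gamma|)(\mu+|\zeta|^2)^{\gamma}$. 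This already settles the upper bound: after taking absolute values inside the integral, an application of Lemma~\ref{l:tecnico} with exponent $\gamma$ and $r=0$ gives
\[
|F(\xi)-F(\eta)| \leq c(\gamma)\,|\xi-\eta|\int_0^1 (\mu+|\eta+t(\xi-\eta)|^2)^\gamma\,dt \leq c_4(\gamma)\,|\xi-\eta|\,(\mu+|\xi|^2+|\eta|^2)^{\gamma}.
\]

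For the lower bound I would pair $F(\xi)-F(\eta)$ with $\xi-\eta$ and estimate the bilinear form $\langle\nabla F(\zeta)v,v\rangle$ from below. One has
\[
\langle\nabla F(\zeta)v,v\rangle = (\mu+|\zeta|^2)^{\gamma}|v|^2 + 2\gamma(\mu+|\zeta|^2)^{\gamma-1}\langle\zeta,v\rangle^2.
\]
When $\gamma\geq 0$ the second term is nonnegative and the bound $\geq (\mu+|\zeta|^2)^\gamma|v|^2$ is immediate. When $-\sfrac12<\gamma<0$, Cauchy--Schwarz $\langle\zeta,v\rangle^2\leq|\zeta|^2|v|^2$ yields
\[
\langle\nabla F(\zeta)v,v\rangle \geq (\mu+|\zeta|^2)^{\gamma-1}\bigl(\mu+(1+2\gamma)|\zeta|^2\bigr)|v|^2 \geq (2\gamma+1)(\mu+|\zeta|^2)^{\gamma}|v|^2,
\]
where the assumption $\gamma>-\sfrac12$ is precisely what ensures positivity. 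Integrating in $t$ and invoking Lemma~\ref{l:tecnico} once more produces
\[
\langle F(\xi)-F(\eta),\xi-\eta\rangle \geq c(\gamma)\,(\mu+|\xi|^2+|\eta|^2)^{\gamma}|\xi-\eta|^2,
\]
and a final application of Cauchy--Schwarz on the left-hand side delivers the lower estimate.

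The one issue that requires care is the case $\mu=0$ and $-\sfrac12<\gamma<0$, in which $F$ fails to be $C^1$ at the origin, so the segment $[\eta,\xi]$ may pass through a singularity. This is handled by noting that either both $\xi$ and $\eta$ are nonzero and lie on a line not through the origin (so $F$ is smooth along the segment except possibly at one point, and the integrand $(\mu+|\eta+t(\xi-\eta)|^2)^\gamma$ is integrable thanks to $\gamma>-\sfrac12$), or by an approximation with $\mu_k\downarrow 0$ passing to the limit in the already-established estimates, whose constants depend only on $\gamma$. I expect the computational heart of the argument, i.e.\ the positivity of the quadratic form under the sharp restriction $\gamma>-\sfrac12$, to be the only genuinely delicate point; everything else reduces to bookkeeping with Lemma~\ref{l:tecnico}.
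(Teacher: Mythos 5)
Your proposal is correct and matches the paper's proof in all essential respects: the paper introduces the scalar potential $h(\xi)=\frac{1}{2(\gamma+1)}(\mu+|\xi|^2)^{\gamma+1}$ and works with $\nabla h=F$ and $\nabla^2 h=\nabla F$, which is exactly your computation in thinly disguised form, and it obtains the same two-sided bound $\bigl(1\wedge(1+2\gamma)\bigr)|v|^2\le (\mu+|\zeta|^2)^{-\gamma}\langle\nabla^2 h(\zeta)v,v\rangle\le \bigl(1\vee(1+2\gamma)\bigr)|v|^2$ before integrating along the segment and invoking Lemma~\ref{l:tecnico}. The paper also handles $\mu=0$ exactly as you suggest, by passing to the limit $\mu\downarrow 0$ using that the constants depend only on $\gamma$.
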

\begin{proof}
 Assume $\mu>0$ and consider the smooth convex function 
 $h(\xi):=\frac{1}{2(\gamma+1)}(\mu+|\xi|^2)^{\gamma+1}$. For all $\xi\in\R^n$ we have
 \[
 \nabla h(\xi)=(\mu+|\xi|^2)^\gamma\xi,\qquad%\text{and}\quad
 \nabla^2 h(\xi)=(\mu+|\xi|^2)^{\gamma}\Big(Id+2\gamma\frac{\xi\otimes\xi}{\mu+|\xi|^2}\Big).
 \]
 Noting that for all $\xi$, $\eta\in\R^n$ it holds
 \[
 \big(1\wedge (1+2\gamma)\big)|\eta|^2
 \leq\frac{\langle\nabla^2 h(\xi)\eta,\eta\rangle}{(\mu+|\xi|^2)^{\gamma}}
 \leq\big(1\vee (1+2\gamma)\big)|\eta|^2,
 \]
 the conclusion follows easily from 
 $\nabla h(\xi)-\nabla h(\eta)=\int_0^1\nabla^2h(\eta + t(\xi-\eta)) (\xi-\eta) dt$
 and Lemma~\ref{l:tecnico} with 
 $c_3=\big(1\wedge (1+2\gamma)\big)c_1$ and $c_4=\big(1\vee (1+2\gamma)\big)c_2$ 
 being $c_1$ and $c_2$ the constants there.
 
 If $\mu=0$ we can simply pass to the limit in formula \eqref{e:tecnica2} as $\mu\downarrow 0$,
 since $c_3$ and $c_4$ depend only on $\gamma$.
 \end{proof}

We collect next %in the specific case of interest here, several results contained in 
several properties of $V_\mu$ instrumental for the developments in what follows.
\begin{lemma}\label{l:Vmu}
For all $\xi,\,\eta\in\R^{n\times n}$ and for all $\mu\geq 0$ we have
\begin{itemize}
  \item[(i)] if $p\geq 2$: $c|V_\mu(\xi-\eta)|\leq |V_\mu(\xi)-V_\mu(\eta)|$ 
  for some $c=c(p)>0$, and for all $L>0$ there exists $c=c(\mu,L)>0$ such that 
  $|V_\mu(\xi)-V_\mu(\eta)|\leq c|V_\mu(\xi-\eta)|$ if $|\eta|\leq L$;
  \item[(ii)] if $p\in (1,2)$:
  $|V_\mu(\xi-\eta)|\geq c|V_\mu(\xi)-V_\mu(\eta)|$ for some $c=c(p)>0$, and 
  for all $L>0$ there exists $c=c(\mu,L)>0$ such that 
  $|V_\mu(\xi-\eta)|\leq c|V_\mu(\xi)-V_\mu(\eta)|$ if $|\eta|\leq L$;
    \item[(iii)] $|V_\mu(\xi+\eta)|\leq c(p)(|V_\mu(\xi)|+|V_\mu(\eta)|)$ for all $p\in(1,\infty)$;
  \item[(iv)] $(2(\mu\vee|\xi|^2))^{\sfrac{p}2-1}|\xi|^2\leq|V_\mu(\xi)|^2\leq |\xi|^p$
  %(2(\mu\wedge|\xi|^2))^{\sfrac{p}2-1}|\xi|^2$ 
  if $p\in(1,2)$, $|\xi|^p\leq|V_\mu(\xi)|^2\leq %(2\mu)^{\sfrac p2-1}|\xi|^2+{c(p)}|\xi|^p$ $
  {2^{\sfrac p2-1}(\mu^{p/2-1}|\xi|^2+|\xi|^p)} $ if $p\geq 2$;
  \item[(v)] $\xi\mapsto|V_\mu(\xi)|^2$ is convex for all $p\in[2,\infty)$; for all $p\in(1,2)$ we have 
  $(\mu^{\sfrac{(2-p)}{2}}+|\xi|^{2-p})^{-1}|\xi|^2\leq|V_\mu(\xi)|^2\leq c(p)(\mu^{\sfrac{(2-p)}{2}}+|\xi|^{2-p})^{-1}|\xi|^2$ 
  and $\xi\mapsto(\mu^{\sfrac{(2-p)}{2}}+|\xi|^{2-p})^{-1}|\xi|^2$ is convex.
 \end{itemize}
\end{lemma}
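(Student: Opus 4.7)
The plan is to reduce each item to elementary scalar computations plus one invocation of Lemma~\ref{l:tecnico2}.

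Items (iv) and (iii) are direct. For (iv) I expand $|V_\mu(\xi)|^2=(\mu+|\xi|^2)^{(p-2)/2}|\xi|^2$ and split on the sign of $(p-2)/2$: for $p\in(1,2)$, the bounds $|\xi|^2\le\mu+|\xi|^2\le 2\max(\mu,|\xi|^2)$ combined with the negativity of the exponent furnish the two sides; for $p\ge 2$, I use $(\mu+|\xi|^2)^{(p-2)/2}\ge|\xi|^{p-2}$ and $(\mu+|\xi|^2)^{(p-2)/2}\le c(p)(\mu^{(p-2)/2}+|\xi|^{p-2})$ (subadditivity or convexity of $s\mapsto s^{(p-2)/2}$ according as this exponent is $\le 1$ or not). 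For (iii), the scalar function $t\mapsto(\mu+t^2)^{(p-2)/2}t^2$ is nondecreasing on $[0,\infty)$ (its derivative equals $(\mu+t^2)^{(p-4)/2}t(pt^2+2\mu)\ge 0$), so assuming $|\eta|\le|\xi|$ without loss of generality gives $|V_\mu(\xi+\eta)|^2\le(\mu+4|\xi|^2)^{(p-2)/2}\cdot 4|\xi|^2$; a comparison of $(\mu+4|\xi|^2)^{(p-2)/2}$ with $(\mu+|\xi|^2)^{(p-2)/2}$ (handling each sign of $p-2$) bounds this by $c(p)|V_\mu(\xi)|^2\le c(p)(|V_\mu(\xi)|^2+|V_\mu(\eta)|^2)$, and the claim follows upon taking a square root.

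Items (i) and (ii) are the heart of the lemma. The idea is to apply Lemma~\ref{l:tecnico2} with $\gamma=(p-2)/4$ (admissible since $p>1$ gives $\gamma>-\tfrac12$), which yields
\begin{equation*}
|V_\mu(\xi)-V_\mu(\eta)|\,\asymp\,(\mu+|\xi|^2+|\eta|^2)^{(p-2)/4}|\xi-\eta|.
\end{equation*}
Since by definition $|V_\mu(\xi-\eta)|=(\mu+|\xi-\eta|^2)^{(p-2)/4}|\xi-\eta|$, both inequalities reduce to comparing the two prefactors. The unconditional bound in each of (i) and (ii) follows at once from $|\xi-\eta|^2\le 2(|\xi|^2+|\eta|^2)$, the sign of $(p-2)/4$ selecting the direction of the inequality. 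For the reverse bound under $|\eta|\le L$, I use $|\xi|^2+|\eta|^2\le 2|\xi-\eta|^2+3L^2$ and then absorb $L^2\le(L^2/\mu)(\mu+|\xi-\eta|^2)$; this both produces the estimate and transparently explains why the resulting constant must depend on $\mu$.

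For (v), the case $p\ge 2$ is settled by a direct Hessian computation: one finds
\begin{equation*}
\nabla^2|V_\mu(\xi)|^2=(\mu+|\xi|^2)^{(p-4)/2}\Bigl[(p|\xi|^2+2\mu)\,\Id+(p-2)(p|\xi|^2+4\mu)\tfrac{\xi\otimes\xi}{\mu+|\xi|^2}\Bigr],
\end{equation*}
and both bracket coefficients are nonnegative for $p\ge 2$, giving positive semidefiniteness. For $p\in(1,2)$, the two-sided bound on $|V_\mu|^2$ follows from the elementary chain $\tfrac12(\mu^{(2-p)/2}+|\xi|^{2-p})\le(\mu+|\xi|^2)^{(2-p)/2}\le\mu^{(2-p)/2}+|\xi|^{2-p}$ (subadditivity of $s\mapsto s^{(2-p)/2}$ together with $\max(\mu,|\xi|^2)\ge\tfrac12(\mu+|\xi|^2)$), upon inverting and multiplying by $|\xi|^2$. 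The main technical nuisance is the convexity of the majorant $\xi\mapsto|\xi|^2/(\mu^{(2-p)/2}+|\xi|^{2-p})$: writing it as $H(|\xi|^2)$ with $H(t)=t/(a+t^{(2-p)/2})$, a radial Hessian analysis reduces the problem to nonnegativity of an explicit one-variable expression, checkable by direct (if tedious) calculation, or alternatively deducible from standard facts about the shifted $N$-functions alluded to in the introduction.
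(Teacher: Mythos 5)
Your proposal is correct and reaches the same conclusions, and the essential mechanism is the same as the paper's: Lemma~\ref{l:tecnico2} with $\gamma=(p-2)/4$ is the engine for (i), and a Hessian computation handles (v) for $p\ge2$ while the $\ell^1$-$\ell^{2/(2-p)}$ norm comparison on $(\mu^{(2-p)/2},|\xi|^{2-p})$ handles (v) for $p<2$. The genuine difference is in the sub-quadratic range: for items (ii), (iii), (iv) with $p\in(1,2)$ the paper simply cites \cite[Lemma~2.1]{CarozzaFuscoMingione98}, whereas you give a unified, self-contained argument by applying Lemma~\ref{l:tecnico2} symmetrically for both signs of $p-2$ and reducing all of (iii), (iv) to elementary scalar monotonicity/subadditivity facts. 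Your treatment is a bit longer but has the virtue of making explicit that the two-sided inequalities in (i) and (ii) are really the same statement with the direction of each bound flipping with the sign of $(p-2)/4$, and of exposing concretely why the constant in the second halves of (i) and (ii) must blow up as $\mu\downarrow0$ (the bound $L^2\le(L^2/\mu)(\mu+|\xi-\eta|^2)$ is exactly the place where $\mu>0$ is used; for $\mu=0$ and $p\neq2$ that direction genuinely fails, e.g.\ taking $\xi\to\eta$ with $|\eta|=L$). Your Hessian formula for $|V_\mu(\cdot)|^2$ is correct and matches, after a trivial rearrangement, the quantity the paper checks for nonnegativity. The one soft spot is the final convexity claim in (v) for $p\in(1,2)$, which you leave as ``checkable by direct calculation'' without doing it; the paper is equally terse here, so this is consistent with the source, but for a fully self-contained proof you would want to verify that $t\mapsto t^2/(a+t^{2-p})$ is convex and nondecreasing on $[0,\infty)$ and then invoke that the composition of a convex nondecreasing function with the convex map $\xi\mapsto|\xi|$ is convex.
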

\begin{proof}
If $p\in[2,\infty)$, property (i) follows from Lemma~\ref{l:tecnico2}, 
while properties (iii) and (iv) are simple consequences of the very definition of $V_\mu$.  
% (\cite[Lemma~2.2]{AcerbiMingione02} item (i) above corresponds to items 
% (c) and (d) there, (iii) above to (b) there, and (iv) above to (e) there.)

Instead, for the case $p\in(1,2)$ we refer to \cite[Lemma~2.1]{CarozzaFuscoMingione98}.
More precisely, item (ii) above is contained in items (v) and (vi) there, 
(iii) above in (iii) there, and (iv) above in (i) there.

Finally, (v) follows by a simple computation. Indeed, first note that 
$|V_\mu(\xi)|^2=\phi_\mu(|\xi|^2)$, where $\phi_\mu(t):=(\mu+t)^{\sfrac p2-1}t$ for $t\geq 0$.
Then $\xi\mapsto|V_\mu(\xi)|^2$ is convex if and only if for all $\eta,\,\xi\in\R^n$
\[
 2\phi_\mu'(|\xi|^2)|\eta|^2+4\phi_\mu''(|\xi|^2)\langle \eta,\xi\rangle^2\geq 0.
\]
Using the explicit formulas for the first and second derivatives of $\phi_\mu$
this amounts to prove for all $\eta,\,\xi\in\R^n$
\[
 (\mu+|\xi|^2)\big(2\mu+ p|\xi|^2\big)|\eta|^2
 +\big(4(p-2)\mu+p%(\frac p2-1)
 (p-2)|\xi|^2\big)\langle \eta,\xi\rangle^2\geq 0.
\]
In particular the conclusion is straightforward for $p\geq 2$. 
Instead, for $p\in(1,2)$ we follow \cite[Section~3]{DuzGrotKr}.
We first observe that $\|x\|_{\frac{2}{2-p}}\leq\|x\|_1\leq2^{\sfrac{p}{2}}\|x\|_{\frac{2}{2-p}}$ 
applied to the vector $(\mu^{\sfrac{(2-p)}{2}},\xi^{2-p})$ gives 
$(\mu^{\sfrac{(2-p)}{2}}+|\xi|^{2-p})^{-1}|\xi|^2\leq|V_\mu(\xi)|^2\leq c(p)(\mu^{\sfrac{(2-p)}{2}}+|\xi|^{2-p})^{-1}|\xi|^2$. 
A direct computation finally shows that $t\mapsto(\mu^{\sfrac{(2-p)}{2}}+t^{2-p})^{-1}t^2$ is convex and monotone increasing 
on $[0,+\infty)$, and that it vanishes for $t=0$. We conclude that $\xi\mapsto(\mu^{\sfrac{(2-p)}{2}}+|\xi|^{2-p})^{-1}|\xi|^2$ 
is convex.
\end{proof}
{Finally, we state a useful property established in \cite[Lemma~2.8]{DieningKaplicky}.
\begin{lemma}\label{l:DieningKaplicky}
For all $\mu\geq 0$ there exists a constant $c=c(n,p,\mu)>0$ such that
for every $u\in W^{1,p}(\Omega;\R^n)$ if $B_r(x_0)\subset\Omega$
\begin{align*}
 \int_{B_r(x_0)}\left|V_\mu(e(u))-\big(V_\mu(e(u))\big)_{B_r(x_0)}\right|^2dx &\leq
 \int_{B_r(x_0)}\left|V_\mu(e(u))-V_\mu\big((e(u))_{B_r(x_0)}\big)\right|^2dx\\
 &\leq c
 \int_{B_r(x_0)}\left|V_\mu(e(u))-\big(V_\mu(e(u))\big)_{B_r(x_0)}\right|^2dx.
\end{align*}
\end{lemma}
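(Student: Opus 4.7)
The first inequality is immediate from the variational characterization of the $L^2$-mean: $\bigl(V_\mu(e(u))\bigr)_{B_r(x_0)}$ minimizes $c\mapsto\int_{B_r(x_0)}|V_\mu(e(u))-c|^2\,dx$ among constants, so testing with the particular choice $c=V_\mu\bigl((e(u))_{B_r(x_0)}\bigr)$ yields the bound. For the second inequality, set $V:=V_\mu(e(u))$, $\bar V:=(V)_{B_r(x_0)}$, $A:=(e(u))_{B_r(x_0)}$ and $W:=V_\mu(A)$. Bessel's identity then gives
\[
\int_{B_r(x_0)} |V-W|^2\,dx \;=\; \int_{B_r(x_0)} |V-\bar V|^2\,dx \;+\; |B_r(x_0)|\,|\bar V-W|^2,
\]
so the claim reduces to the Jensen-defect estimate
\[
|\bar V - W|^2 \;\leq\; C(n,p,\mu)\,\fint_{B_r(x_0)} |V-\bar V|^2\,dx.
\]

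To establish this defect estimate I would exploit the zero-mean property $\fint_{B_r(x_0)}(e(u)-A)\,dx=0$ together with the $C^2$ regularity of $V_\mu$ (guaranteed when $\mu>0$, with the usual approximation argument for $\mu=0$). By Taylor's formula with integral remainder,
\[
\bar V - W \;=\; \fint_{B_r(x_0)} \int_0^1 \bigl[\nabla V_\mu(A+t(e(u)-A)) - \nabla V_\mu(A)\bigr](e(u)-A)\,dt\,dx,
\]
where subtracting the constant $\nabla V_\mu(A)$ inside the integrand is legitimate precisely because of the zero-mean property. The defect is thus a genuinely second-order quantity in $e(u)-A$, which one estimates through the quasi-Lipschitz equivalence $|V_\mu(\xi)-V_\mu(\eta)|^2\sim(\mu+|\xi|^2+|\eta|^2)^{(p-2)/2}|\xi-\eta|^2$ from Lemma~\ref{l:tecnico2}, combined with H\"older's inequality and the convexity of $|V_\mu|^2$ for $p\geq 2$ or of its convex equivalent for $p\in(1,2)$ (Lemma~\ref{l:Vmu}(v)). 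The constant produced in this way depends on $n$, $p$ and $\mu$, consistently with the statement.

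The main obstacle is precisely this Jensen-defect bound. A naive application of Jensen's inequality gives only $|\bar V-W|^2\leq\fint_{B_r(x_0)}|V-W|^2\,dx$, which when combined with Bessel's identity reduces to a tautology. One therefore genuinely needs the cancellation of the first-order Taylor term (via the zero-mean property) matched against the second-order variance of $V_\mu(e(u))$. The $\mu$-dependence of the constant is intrinsic, since Lemma~\ref{l:Vmu}(i)-(ii) shows that $V_\mu$ is not globally bi-Lipschitz for $p\neq 2$: the upper bounds there require $|\eta|\leq L$ with constant $c(\mu,L)$, which is what forces the nonuniform dependence on $\mu$ to be transferred to the estimate of the defect.
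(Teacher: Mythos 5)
Your first inequality (minimality of the mean) and the Bessel-type orthogonal decomposition
$\int_{B_r}|V-W|^2=\int_{B_r}|V-\bar V|^2+|B_r|\,|\bar V-W|^2$
are correct and do reduce the claim to the Jensen-defect bound $|\bar V-W|^2\leq C\fint_{B_r}|V-\bar V|^2\,dx$. (Note that the paper itself does not prove this lemma: it cites \cite[Lemma~2.8]{DieningKaplicky}, whose argument runs through the ``change of shift'' inequality for the shifted $N$-functions $\phi_a$, not through a Taylor expansion of $V_\mu$.) The gap in your proposal is that the Taylor-remainder step does not close. Writing $\eta:=e(u)-A$ and $R(\eta):=V_\mu(A+\eta)-V_\mu(A)-\nabla V_\mu(A)\eta$, you correctly have $\bar V-W=\fint R(\eta)\,dx$; but the bound you propose goes via $|\bar V-W|\leq\fint|R(\eta)|\,dx$, and for $p<2$ this last quantity is \emph{not} controlled by $(\fint|V-\bar V|^2)^{1/2}$. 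Concretely, when $|\eta|\gg a:=(\mu+|A|^2)^{1/2}$ and $p<2$ one has $|R(\eta)|\sim a^{(p-2)/2}|\eta|$, which is much larger than $|V-W|\sim|\eta|^{p/2}$; the true smallness of $\bar V-W$ comes from the cancellation of the linear term $\nabla V_\mu(A)\eta$ upon averaging, and that cancellation is destroyed once you pass to $\fint|R|$. A two-valued laminate with $A=0$, $e(u)=\eta_0$ on a fraction $\eps$ of $B_r$ and $e(u)=-\tfrac{\eps}{1-\eps}\eta_0$ on the rest makes this quantitative: as $\eta_0\to\infty$ one finds $\fint|R|\sim\eps\,\eta_0$ while $(\fint|V-\bar V|^2)^{1/2}\sim\eps^{1/2}\eta_0^{p/2}$, so $\fint|R|/(\fint|V-\bar V|^2)^{1/2}\to\infty$ even though $|\bar V-W|/(\fint|V-\bar V|^2)^{1/2}$ stays bounded (as the lemma asserts).

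For $p\geq2$ the pointwise estimate $|R(\eta)|\lesssim|V-W|\min(1,|\eta|/a)$ does hold, but pushing it through H\"older only yields $|\bar V-W|^2\lesssim a^{-p}\big(\fint|V-W|^2\big)^2$, which combined with Bessel's identity establishes the claim only under a smallness hypothesis $\fint|V-W|^2\lesssim a^p$; the lemma, however, is unconditional in $u$. The appeal to ``convexity of $|V_\mu|^2$'' does not repair this: convexity gives $|W|^2\leq\fint|V|^2$, which is not a Jensen-defect estimate. There is a further issue with the closing remark on $\mu=0$: since $|\nabla^2V_\mu|\sim\mu^{(p-4)/4}$ near the origin, the constants produced by your Taylor argument blow up as $\mu\downarrow0$, so the ``usual approximation argument'' does not transfer a $\mu>0$ bound to $\mu=0$. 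The missing ingredient is the shift-change lemma of Diening--Ettwein (for every $\delta>0$, $\phi_a(t)\leq C_\delta\phi_b(t)+\delta\phi_b(|a-b|)$), used together with $|V_\mu(\xi)-V_\mu(\zeta)|^2\sim\phi_{|\zeta|}(|\xi-\zeta|)$, convexity of $\phi_a$, and the doubling identity $\fint_{B_r}|V-\bar V|^2=\tfrac12\fint_{B_r}\fint_{B_r}|V(x)-V(y)|^2\,dx\,dy$; this is a structurally different argument from the Taylor expansion you sketch, and it is what yields the unconditional estimate with constant depending only on $n$, $p$ (and, as stated here, possibly $\mu$).
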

}

\subsection{Shifted $N$-functions}

We fix $p\in(1,\infty)$ and $\mu\geq 0$, and, following 
\cite[Definition~22]{DieningEttwein} for every $a\geq 0$ we consider
the function $\phi_a:[0,\infty)\to\R$,
\begin{equation}\label{a:a}
 \phi_a(t):=\int_0^t\big(\mu+(a+s)^2\big)^{\sfrac p2-1}s\,ds.
\end{equation}
A simple computation shows that $\phi_a''>0$ and, further,
\begin{equation}\label{eqphiprimophisecondo}
 \phi'_a(t)\le c \phi_a''(t) t \hskip1cm \text{ for all $t\geq 0$}
\end{equation}
($\phi_a$ turns out to be a N-function
in the language of \cite[Appendix]{DieningEttwein}). From the definition 
one easily checks that for all $a,t\geq 0$  we have
\begin{equation}\label{a:n0}
 \phi_a(t)\leq \frac1p\big(\big(\mu+(a+t)^2\big)^{\sfrac p2}-(\mu+a^2)^{\sfrac p2}\big).
\end{equation}
More precisely, for every $t\geq 0$ we have
\begin{equation}\label{a:n}
 (\mu+(a+t)^2)^{\sfrac p2-1}\frac{t^2}2\leq\phi_a(t)\leq 
 \frac{t^p}p%\frac1p\big((\mu+t^2)^{\sfrac p2}-\mu^{\sfrac p2}\big)
 \qquad\text{if $p\in(1,2)$},
\end{equation}
\begin{equation}\label{a:n2}
 \frac{t^p}p
 %\frac1p\big((\mu+t^2)^{\sfrac p2}-\mu^{\sfrac p2}\big)
 \leq \phi_a(t)\leq (\mu+(a+t)^2)^{\sfrac p2-1}\frac{t^2}2\qquad\text{if $p\in[2,\infty)$}.
\end{equation}
In addition, if $p\in(1,2)$, for every $t\geq 0$ we have
\begin{equation}\label{a:n3}
 \phi_a(t)\leq (\mu+a^2)^{\sfrac p2-1}\frac{t^2}2.
\end{equation}

A simple change of variables shows that the 
family $\{\phi_a\}_{a\geq 0}$ satisfies the $\triangle_2$ and $\nabla_2$ conditions 
uniformly in $a$, that is for all $a\geq 0$
\begin{equation}\label{a:b}
 \lambda^{p\wedge 2}\phi_a(t)\leq\phi_a(\lambda\,t)\leq\lambda^{p\vee 2}\phi_a(t),
\end{equation}
for all $\lambda\geq 1$ and $t\geq 0$.
We define the polar of $\phi_a$ in the sense of convex analysis by
\begin{equation}\label{e:phistar}
\phi_a^* (s) := \sup_{t\ge0} \{ st-\phi_a(t)\}\,.
\end{equation}
By convexity and growth of $\phi_a$ one sees that the supremum is attained at a $t$ such that 
$s=\phi_a'(t)$.
For all $a\geq 0$ we have
\begin{equation}\label{a:c}
 \lambda^{\frac p{p-1}\wedge 2}\phi_a^*(s)\leq\phi_a^*(\lambda\,s)\leq\lambda^{\frac p{p-1}\vee 2}\phi_a^*(s),
\end{equation}
for every $\lambda\geq 1$ and for every $t\geq 0$.
In view of \eqref{a:b} and \eqref{a:c} above, Young's inequality holds uniformly in $a\geq 0$: 
for all $\delta\in(0,1]$ there exists $C_{\delta,p}>0$ such that 
\begin{equation}\label{a:e2}
 s\,t\leq
 \delta\,\phi_a^*(s)+C_{\delta,p}\,\phi_a(t)\,
\hskip3mm\text{ and }\hskip3mm
  s\,t\leq
 \delta\,\phi_a(t)+C_{\delta,p}\,\phi_a^*(s)\,
 \end{equation}
%{$$\textrm{forse }s\,t\leq
% \delta\,\phi_a^*(s)+\delta^{-(p-1)\vee1}\,\phi_a(t)?$$}
for every $s$ and $t\geq 0$ and for all $a\ge 0$ (see also \cite[Lemma~32]{DieningEttwein}).  

Convexity of $\phi_a$  implies 
\begin{equation}\label{a:e}
 \frac t2\,\phi_a^\prime\big(\frac{t}{2}\big)\leq\phi_a(t)\leq t\,\phi_a^\prime(t)
 \qquad{\forall t\geq 0}.
\end{equation}
From  $\phi_a^*(\phi_a^\prime(t))=\phi_a^\prime(t)t-\phi_a(t)$, 
\eqref{a:e}, \eqref{a:e2}
we infer that there is a constant $c>0$ such that for all $a\geq 0$
\begin{equation}\label{a:f}
\frac 1c\,\phi_a(t) \leq\phi_a^*(\phi_a^\prime(t))\leq c\,\phi_a(t),
\end{equation}
for every $t\geq 0$ (see also \cite[formula~(2.3)]{DieningEttwein}).

Finally, note that by the first inequality in Lemma~\ref{l:tecnico2} 
with exponent $\gamma=(p-2)/4>-1/4$ we have
\begin{equation*}
 c|\xi-\eta|^2 (\mu+|\xi|^2+|\eta|^2)^{p/2-1} \le |V_\mu(\xi)-V_\mu(\eta)|^2
\end{equation*}
for every $\xi$, $\eta\in\R^{n\times n}$.
Furthermore, by the second inequality in \eqref{a:e},
\begin{equation*}
 \phi_{|\xi|}(|\xi-\eta|) \le c |\xi-\eta|^2 (\mu+|\xi|^2+|\xi-\eta|^2)^{p/2-1},
\end{equation*}
and therefore
\begin{equation}\label{eqphixivmu}
 \phi_{|\xi|} (|\xi-\eta|) \le c |V_\mu(\xi)-V_\mu(\eta)|^2 \,.
\end{equation}

\section{Basic regularity results}

In this section we prove some regularity results on local minimizers of generalized linear elasticity systems.
The ensuing Propositions~\ref{p:regVmusuper} and \ref{p:regVmusub} contain the main Caccioppoli's type estimates 
in the super-quadratic and sub-quadratic case, respectively. 
In turn, those results immediately entail a higher integrability result in any dimension that 
will be instrumental for establishing partial regularity together with an estimate of the Hausdorff dimension 
of the singular set (see Propositions~\ref{p:HI} and Theorem~\ref{t:partial autonomous}), as well as for proving 
$C^{1,\alpha}$ regularity for minimizers in the two dimensional case. Moreover, in the two and three dimensional setting useful 
decay properties that were needed in the proof of the density lower bound in \cite{ContiFocardiIurlano17} and 
\cite{ChambolleContiIurlano2017} can be deduced from Propositions~\ref{p:regVmusuper}, 
\ref{p:regVmusub}, and \ref{p:HI} (cf. Proposition~\ref{p:decayVmu} and Corollary \ref{c:decayVmu bis}). 

We point out that if $p\in[2,\infty)$ a more direct and standard proof can be provided that does not need the 
shifted N-functions $\phi_a$ in \eqref{a:a}. Instead, those tools seem to be instrumental for the sub-quadratic 
case. 
%Since the arguments leading to Propositions~\ref{p:regVmusuper} and \ref{p:regVmusub} are quite similar, 
%though the second instance is slightly more technically demanding, 
Therefore, for simplicity, we have decided to provide a common framework for both. 

In what follows we will make extensive use of the difference quotients introduced in 
\eqref{e:rapporti incrementali} and of the mean values in \eqref{e:media}.

\subsection{Caccioppoli's inequalities}\label{ss:caccioppoli}

We start off dealing with the super-linear case. For future applications to higher integrability 
(cf. Proposition~\ref{p:HI}) it is convenient to set, for $p>2$,
\begin{equation}\label{e:ptilde}
\tilde{p}(\lambda):=\frac{\lambda p(p-2)}{\lambda(p-1)-1} 
\end{equation}
for every $\lambda\in(\frac{1}{p-1},1]$. For {$p> 2$}, $\tilde{p}(\cdot)$ is a decreasing 
function on $(\frac{1}{p-1},1]$ with $\tilde{p}(1)=p$ {and $\tilde{p}\to\infty$ as $\lambda\to \frac{1}{p-1}$}. 
In addition, define $\lambda_0\in(\frac{1}{p-1},1]$
to be such that $\tilde{p}(\lambda_0)= p^\ast$, being $p^\ast:=\frac{np}{n-p}$, if $p\in(1,n)$, and $\lambda_0=\frac{1}{p-1}$ 
otherwise and $\tilde{p}(\lambda_0)$ can be any positive exponent. If $p=2$ we set $\lambda=\lambda_0=1$ and $\tilde p(\lambda_0)=p^*$.
In particular, by Sobolev embedding, $u\in L^{\tilde p(\lambda)}(\Omega;\R^n)$
for all $\lambda\in[\lambda_0,1]$.

\begin{proposition}\label{p:regVmusuper}
Let $n\ge 2$, $p\in [2,\infty)$, $\cost$ and $\mu\geq 0$, $g\in W^{1,p}(\Omega;\R^n)$
and let $u\in W^{1,p}(\Omega;\R^n)$ be a local minimizer of $\FF$ defined in \eqref{e:FF}.

Then, $V_\mu(e(u))\in W^{1,2}_\loc(\Omega;\R^{n\times n}_\sym)$ and, 
in addition, $u\in W^{2,2}_\loc(\Omega;\R^{n})$ if $p>2$ for $\mu>0$, 
and if $p=2$ for $\mu\geq 0$. 
More precisely, if $\lambda\in[\lambda_0,1]$ there is a constant $c=c(n,p,\lambda)>0$ such that for 
$B_{2r}(x_0)\subset\Omega$ %{and $\beta\in(0,1]$} 
%\begin{multline}\label{e:CaccioppoliVmusuper}
%\int_{B_r(x_0)}|\nabla \big(V_\mu(e(u))\big)|^2dx
%\leq c\frac {1+\cost}{r^2}\int_{B_{2r}(x_0)}%\setminus B_{r}(x_0)
%|V_\mu(e(u))-(V_\mu(e(u)))_{B_{2r}(x_0)}|^2dx\\
%+c\,\cost r^{\frac2{p-1}}
%{\| |u-g|^{p-2} D(u-g)\|_{L^{p'}(B_{2r}(x_0))}}
%\end{multline}
%OLD:
\begin{multline}\label{e:CaccioppoliVmusuper}
\int_{B_r(x_0)}|\nabla \big(V_\mu(e(u))\big)|^2dx
\leq c\frac {1+\cost}{r^2}\int_{B_{2r}(x_0)}%\setminus B_{r}(x_0)
|V_\mu(e(u))-(V_\mu(e(u)))_{B_{2r}(x_0)}|^2dx\\
+c\,\cost r^{\frac2{p-1}}\int_{B_{2r}(x_0)}\big(|u-g|^{\tilde{p}(\lambda)}+|\nabla(u-g)|^{\lambda p}\big)dx.
\end{multline}
\end{proposition}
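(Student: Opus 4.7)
The plan is to follow the classical Nirenberg difference-quotient scheme, adapted to the symmetric-gradient setting via the nonlinear potential $V_\mu$ and the shifted $N$-functions $\phi_a$ recalled in the previous subsection. First, I would write the weak Euler-Lagrange equation satisfied by the local minimizer,
\begin{equation*}
\int_\Omega \nabla f_\mu(e(u)):e(\varphi)\,dx + \cost p\int_\Omega |u-g|^{p-2}(u-g)\cdot\varphi\,dx = 0 \qquad \forall\,\varphi\in C^\infty_c(\Omega;\R^n),
\end{equation*}
take its $s$-th discrete derivative for $|h|\ll r$, and test with $\varphi=\triangle_{s,-h}(\zeta^2\triangle_{s,h}(u-\ell))$, where $\zeta\in C^\infty_c(B_{2r}(x_0))$ is a standard cut-off with $\zeta\equiv 1$ on $B_r(x_0)$ and $|\nabla\zeta|\le c/r$, and $\ell(x)=M(x-x_0)+b$ is the affine map with $\sym M=(e(u))_{B_{2r}(x_0)}$ and $\skew M$, $b$ chosen so that the sharp $L^p$ Korn-Poincar\'e inequality holds for $u-\ell$ on $B_{2r}(x_0)$. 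Since $\triangle_{s,h}e(\ell)=0$, the resulting identity, after the discrete integration by parts $\int F\,\triangle_{s,-h}G\,dx=-\int (\triangle_{s,h}F)G\,dx$, takes the form $J_{\mathrm{main}}+J_{\mathrm{cross}}+\cost p\,J_{\mathrm{l.o.}}=0$ with $J_{\mathrm{main}}=\int\zeta^2\,\triangle_{s,h}\nabla f_\mu(e(u)):\triangle_{s,h}e(u)\,dx$.

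Writing $\triangle_{s,h}\nabla f_\mu(e(u))=\big(\int_0^1\nabla^2 f_\mu(e(u)+t\tau_{s,h}e(u))\,dt\big)\triangle_{s,h}e(u)$, the lower bound in (Conv) combined with Lemma \ref{l:tecnico} ($\gamma=p/2-1$) and Lemma \ref{l:tecnico2} ($\gamma=(p-2)/4$) yields $J_{\mathrm{main}}\geq c\int\zeta^2|\triangle_{s,h}V_\mu(e(u))|^2\,dx$. For $J_{\mathrm{cross}}$ the corresponding upper bound in (Conv) gives $|\triangle_{s,h}\nabla f_\mu(e(u))|^2\leq c(\mu+|e(u)|^2+|e(u)(\cdot+he_s)|^2)^{p/2-1}|\triangle_{s,h}V_\mu(e(u))|^2$, so Young's inequality with small $\varepsilon$ absorbs a fraction of $J_{\mathrm{main}}$ on the left and leaves a residual of order $c\int|\nabla\zeta|^2(\mu+|e(u)|^2+|e(u)(\cdot+he_s)|^2)^{p/2-1}|\triangle_{s,h}(u-\ell)|^2\,dx$. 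For $J_{\mathrm{l.o.}}$ I would use the pointwise bound $|\triangle_{s,h}(|u-g|^{p-2}(u-g))|\leq c(|u-g|^{p-2}+|(u-g)(\cdot+he_s)|^{p-2})|\triangle_{s,h}(u-g)|$ and then Young's inequality with dual exponents calibrated by $\lambda$ and $\tilde p(\lambda)$ produces the stated bound $c\cost r^{2/(p-1)}\int(|u-g|^{\tilde p(\lambda)}+|\nabla(u-g)|^{\lambda p})\,dx$ plus another absorbable fraction of $J_{\mathrm{main}}$; the additive $\cost/r^2$ contribution to the coefficient $(1+\cost)/r^2$ in \eqref{e:CaccioppoliVmusuper} comes out of the same splitting via a Poincar\'e step on a residual $|u-\ell|^2/r^2$ term.

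Passing to the limit $h\to 0$ through the Nirenberg lemma yields $V_\mu(e(u))\in W^{1,2}_\loc(\Omega;\R^{n\times n}_\sym)$, and in the non-degenerate regime ($p=2$, or $p>2$ with $\mu>0$) a chain-rule argument further gives $u\in W^{2,2}_\loc(\Omega;\R^n)$, since the weight $(\mu+|\xi|^2)^{(p-2)/4}$ is then bounded below by $\mu^{(p-2)/4}>0$. Summing over $s\in\{1,\ldots,n\}$ and restricting to $B_r(x_0)$ delivers the left-hand side of \eqref{e:CaccioppoliVmusuper}. The final and, I expect, hardest step is to dominate the weighted residual $\frac{c}{r^2}\int_{B_{2r}(x_0)}(\mu+|e(u)|^2)^{p/2-1}|\nabla(u-\ell)|^2\,dx$ by $\frac{c(1+\cost)}{r^2}\int_{B_{2r}(x_0)}|V_\mu(e(u))-(V_\mu(e(u)))_{B_{2r}(x_0)}|^2\,dx$; I would do this in two moves, namely (i) a Korn-Poincar\'e inequality adapted to the shifted $N$-function $\phi_{|e(\ell)|}$, which replaces $|\nabla(u-\ell)|$ by $|e(u-\ell)|$ inside the weight, and (ii) Lemma \ref{l:tecnico2} combined with Lemma \ref{l:DieningKaplicky}, which, since $e(\ell)=(e(u))_{B_{2r}(x_0)}$, identifies the weighted squared difference with the required $V_\mu$-oscillation.
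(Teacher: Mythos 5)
Your outline follows essentially the same difference-quotient strategy as the paper: differentiate the Euler--Lagrange equation, test against a cut-off version of the shifted difference quotient, absorb a small fraction of $\int\zeta^\cdot|\triangle_{s,h}V_\mu(e(u))|^2$ into the main term, estimate the lower-order part via H\"older/Young with exponents $\tilde p(\lambda)$ and $\lambda p$, then finish with a Korn inequality for shifted $N$-functions, \eqref{eqphixivmu}, and Lemma~\ref{l:DieningKaplicky}. The small variations --- your $\zeta^2$ cut-off versus the paper's $\zeta^p$, an affine $\ell$ with $\sym M=(e(u))_{B_{2r}}$ versus the paper's constant $Q:=(\nabla u)_{B_{2r}}$, a direct weighted Young on the cross term versus the paper's use of the Orlicz-pair $(\phi_a,\phi_a^*)$ and \eqref{a:e2}--\eqref{a:f}, and the constant shift $\phi_{|e(\ell)|}$ versus the paper's pointwise shift $\phi_{|e(u)|}$ in the final Korn step --- are all inessential and lead to the same residuals.

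One imprecision worth fixing: the lower-order term $J_{\mathrm{l.o.}}$ does \emph{not} generate ``another absorbable fraction of $J_{\mathrm{main}}$.'' After H\"older/Young the residual from $J_{\mathrm{l.o.}}$ is $\frac{\cost}{r^2}\int_{B_{2r}}|\triangle_{s,h}(u-\ell)|^p\,dx$, which cannot be absorbed into $\int\zeta^2|\triangle_{s,h}V_\mu(e(u))|^2\,dx$; it must be handled separately by the standard $L^p$-Korn inequality together with item (i) of Lemma~\ref{l:Vmu} (this is precisely the chain \eqref{e:Kornsuper2} in the paper), and it is this step --- not a Poincar\'e estimate on a $|u-\ell|^2/r^2$ term --- that produces the extra $\cost/r^2$ in the coefficient $c(1+\cost)/r^2$ of \eqref{e:CaccioppoliVmusuper}. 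With that bookkeeping corrected, the argument you sketch matches the paper's.
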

% \begin{remark}
% If $\cost=0$ one can actually show that 
% \begin{equation}\label{e:CaccioppoliVmusuper bis}
% \int_{B_r(x_0)}|\nabla \big(V_\mu(e(u))\big)|^2dx
% \leq \frac {c}{r^2}\int_{B_{2r}(x_0)\setminus B_{r}(x_0)}
% |V_\mu(e(u))-(V_\mu(e(u)))_{B_{2r}(x_0)}|^2dx
% \end{equation}
% (cf. \eqref{e:reg1b} and \eqref{e:Kornsuper}).
% \end{remark}
\begin{proof}%[Proof of Proposition~\ref{p:regVmusuper}]
We begin with showing that there is a constant $c=c(n,p)>0$ such that if $B_{2r}(x_0)\subset\Omega$ 
{then for any matrix} $Q\in\R^{n\times n}$ we have 
 \begin{multline}\label{e:reg1b}
  \int_{B_r(x_0)}|\nabla(V_\mu(e(u)))|^2dx\leq
  \frac c{r^2}\int_{B_{2r}(x_0)\setminus B_{r}(x_0)}\phi_{|e(u)|}(|\nabla u-Q|)dx\\
  +\frac{\cost}{r^2}\int_{B_{2r}(x_0)}|\nabla u-Q|^pdx
  +c\,\cost\,r^{\frac2{p-1}}\int_{B_{2r}(x_0)}\big(|u-g|^p+|\nabla(u-g)|^p\big)dx\,.
 \end{multline}
In particular, on account of \eqref{a:n2}, we infer from \eqref{e:reg1b} that
$V_\mu(e(u){)}\in W^{1,2}(B_r(x_0))$. A covering argument implies then that 
$V_\mu(e(u))\in W^{1,2}_\loc(\Omega;\R^{n\times n}_\sym)$.

%The first part of the argument, up to \eqref{e:I1} included, holds for all $p\in (1,\infty)$. 
%and will be reused later for $p\in(1,2)$. 
Local minimality yields that $u$ is a solution of
 \begin{equation}\label{e:ELpde}
  \int_\Omega\langle \nabla f_\mu(e(u)),e(\varphi)\rangle dx
  +\cost\,p\int_\Omega|u-g|^{p-2}\langle u-g,\varphi\rangle dx
   =0\quad\forall\varphi\in W^{1,p}_0(\Omega;\R^n).
 \end{equation}
 We can use the test field $\varphi:=\triangle_{s,-h}\big(\zetaq(\triangle_{s,h}u-Q\epsilon_s)\big)$, 
 %$q>1$ to be chosen suitably in what follows, 
 with $\zeta\in C^\infty_c(B_{2r}(x_0))$, $0\leq\zeta\leq 1$, 
 $\zeta|_{B_r(x_0)}\equiv 1$ and $|\nabla\zeta|\leq c/r$ %and $|\nabla^2\zeta|\leq c/r^2$, 
 to infer, for $h$ sufficiently small,
\begin{multline}\label{e:test1}
\int_\Omega\langle \triangle_{s,h}\big(\nabla f_\mu(e(u))\big),
\zetaq\triangle_{s,h}(e(u))\rangle dx\\
=-{p}\int_\Omega\langle \triangle_{s,h}\big(\nabla f_\mu(e(u))\big),
 \zetaqu\nabla\zeta\odot(\triangle_{s,h}u-Q\epsilon_s)
\rangle dx\\
-\cost\,p\int_\Omega\langle \triangle_{s,h}\big(|u-g|^{p-2}(u-g)\big),
\zetaq(\triangle_{s,h}u-Q\epsilon_s)\rangle dx.
\end{multline}
Recalling that $f_\mu\in C^2(\R^{n\times n})$ if $p\geq2$ for all $\mu\geq0$ we compute
\begin{multline}\label{e:tshf1}
 \triangle_{s,h}\big(\nabla f_\mu(e(u))\big)(x)=
 \int_0^1\nabla^2 f_\mu\big(e(u)+th\,\triangle_{s,h}(e(u))\big)\,\triangle_{s,h}(e(u))dt
\\ =:\mathbb{A}_{s,h}(x)\triangle_{s,h}(e(u))(x).
\end{multline}
By taking into account \eqref{e:tshf1}, equality \eqref{e:test1} rewrites as
\begin{align}\label{e:test2}
\int_\Omega \zetaq &\langle\mathbb{A}_{s,h}(x)\triangle_{s,h}(e(u)),\triangle_{s,h}(e(u)\big)\rangle dx\notag\\
=&-p\int_\Omega \zetaqu\langle \mathbb{A}_{s,h}(x)\triangle_{s,h}(e(u)),\nabla\zeta\odot(\triangle_{s,h}u-Q\epsilon_s)\rangle dx
\notag\\
&-\cost\,p\int_\Omega\langle \triangle_{s,h}\big(|u-g|^{p-2}(u-g)\big),\zetaq(\triangle_{s,h}u-Q\epsilon_s)\rangle dx\,.
\end{align}
Setting 
\[
W_{s,h}(x):=\int_0^1\big(\mu+|e(u)(x)+t\,\tau_{s,h}(e(u))(x)|^2\big)^{\sfrac p2-1}dt,
\]
the estimates in \eqref{e:bdhessf} give for all $\eta\in\R^{n\times n}_\sym$
\begin{equation}\label{e:wsh}
\frac 1c W_{s,h}(x)|\eta|^2\leq 
\langle \mathbb{A}_{s,h}(x)\eta,\eta\rangle\leq %((p-1)\vee 1) 
c\,W_{s,h}(x)|\eta|^2
\end{equation}
with $c=c(p)>0$. Therefore, using \eqref{e:wsh} in \eqref{e:test2} yields for some
$c=c(p)>0$ %$c=c(p,q)>0$
\begin{multline}\label{e:test3}
\int_\Omega\zetaq W_{s,h}(x) |\triangle_{s,h}(e(u))|^2dx
\leq c\int_\Omega\zetaqu W_{s,h}(x)|\nabla\zeta||\triangle_{s,h}(e(u))||\triangle_{s,h}u-Q\epsilon_s|dx\\
-\cost\,p\int_\Omega\langle \triangle_{s,h}\big(|u-g|^{p-2}(u-g)\big),
\zetaq(\triangle_{s,h}u-Q\epsilon_s)\rangle dx\,.
\end{multline}
Proceeding as in (\ref{e:tshf1}), and using 
$|\nabla V_\mu(\xi)|\le c\,(\mu+|\xi|^2)^{\sfrac{(p-2)}4}$, we obtain
\[%\begin{multline*}%\label{e:test2b}
|\triangle_{s,h}\big(V_\mu(e(u))\big)|\leq
c \int_0^1 
\big(\mu+|e(u)(x)+t\,\tau_{s,h}(e(u))(x)|^2\big)^{\sfrac{(p-2)}4}dt 
|\triangle_{s,h}(e(u))|\,.
\]%\end{multline*}
Using Jensen's inequality in this integral and then comparing with the definition of 
$W_{s,h}$ we infer from \eqref{e:test3}
\begin{multline*}%\label{e:test2b}
 \int_{\Omega}\zetaq|\triangle_{s,h}\big(V_\mu(e(u))\big)|^2dx\leq
c\int_\Omega\zetaqu W_{s,h}(x)|\nabla\zeta||\triangle_{s,h}(e(u))||\triangle_{s,h}u-Q\epsilon_s|dx\\
-\cost\,p\int_\Omega\langle \triangle_{s,h}\big(|u-g|^{p-2}(u-g)\big),
\zetaq(\triangle_{s,h}u-Q\epsilon_s)\rangle dx\,.
\end{multline*}
In turn, from this inequality and \eqref{e:tecnica} we get for some $c=c(p)>0$ %$c=c(p,q)>0$ 
\begin{multline}\label{e:test2t}
 \int_{B_{2r}(x_0)}\zetaq|\tau_{s,h}\big(V_\mu(e(u))\big)|^2dx\\
 \leq\frac cr\int_{B_{2r}(x_0)\setminus B_{r}(x_0)}
\zetaqu\big(\mu+|e(u)|^2+|e(u)(x+he_s)|^2\big)^{\sfrac p2-1}|\tau_{s,h}(e(u))|
|\tau_{s,h}u-hQ\epsilon_s|dx\\
-\cost\,p\int_{B_{2r}(x_0)}\langle \tau_{s,h}\big(|u-g|^{p-2}(u-g)\big),\zetaq(\tau_{s,h}u-Qh\epsilon_s)\rangle 
 dx=:I_1+I_2.
 \end{multline}
By considering the functions $\phi_a$, $a\geq 0$, introduced in \eqref{a:a} above, %of the Appendix, 
the first term on the right hand side of the last inequality can be estimated by %\eqref{e:test2t} rewrites as
\[%\begin{multline*}
 %\int_{B_{2r}(x_0)}\zetaq|\tau_{s,h}\big(V_\mu(e(u))\big)|^2dx\\
 I_1=\frac cr\int_{B_{2r}(x_0)\setminus B_{r}(x_0)}
\zetaqu\phi^\prime_{|e(u)|}\big(|\tau_{s,h}(e(u))|\big)|\tau_{s,h}u-hQ\epsilon_s|dx.
\]%\end{multline*}
Since $\zeta\in(0,1]$, Young's inequality in \eqref{a:e2} gives for every $\delta\in(0,1)$ 
and for some $c=c(p)>0$ %$c=c(p,q)>0$
\begin{eqnarray*}%\label{e:test4}
 %\int_{B_{2r}(x_0)}\hskip-0.5cm&&\zetaq|\tau_{s,h}\big(V_\mu(e(u))\big)|^2dx\\
I_1\hskip-0.5cm&&\stackrel{\eqref{a:e2}}{\leq} c\,\delta \int_{B_{2r}(x_0)\setminus B_{r}(x_0)}
\phi^*_{|e(u)|}\big(\zetaqu\phi^\prime_{|e(u)|}(|\tau_{s,h}(e(u))|)\big)dx\\
&&+%c\,C_\delta 
C_{\delta,p}\int_{B_{2r}(x_0)\setminus B_{r}(x_0)}
\phi_{|e(u)|}\big(\frac 1r|\tau_{s,h}u-hQ\epsilon_s|\big)dx\\
&&\stackrel{\eqref{a:c}}{\leq}c\,\delta \int_{B_{2r}(x_0)\setminus B_{r}(x_0)}
\zetaq%\zeta^{(\frac{p}{p-1}\wedge 2)(q-1)}
\phi^*_{|e(u)|}\big(\phi^\prime_{|e(u)|}(|\tau_{s,h}(e(u))|)\big)dx\\
&&%\frac{c\,C_\delta}{r^2}
+C_{\delta,p}%\Big(\frac{|h|}r\Big)^{p\wedge 2}
\int_{B_{2r}(x_0)\setminus B_{r}(x_0)}
\phi_{|e(u)|}\big(\frac 1r|\tau_{s,h}u-hQ\epsilon_s|\big)dx\\
&&\stackrel{\eqref{a:f}}{\leq}c\,\delta \int_{B_{2r}(x_0)\setminus B_{r}(x_0)}
\zetaq%\zeta^{(\frac{p}{p-1}\wedge 2)(q-1)}
\phi_{|e(u)|}\big(|\tau_{s,h}(e(u))|\big)dx\\
&&+%\frac{c\,C_\delta}{r^2}
C_{\delta,p}%\Big(\frac{|h|}r\Big)^{p\wedge 2}
\int_{B_{2r}(x_0)\setminus B_{r}(x_0)}\phi_{|e(u)|}\big(\frac 1r|\tau_{s,h}u-hQ\epsilon_s|\big)dx.
\end{eqnarray*}
%We now let $q\in\N$ be such that $(\frac{p}{p-1}\wedge 2)(q-1)\geq q$. For instance, let $q=p\vee 2$, then 
By using estimate \eqref{eqphixivmu} in the last but one term from the latter inequality we get 
for some $c=c(p)>0$
\begin{eqnarray}\label{e:I1}
% \int_{B_{2r}(x_0)}\hskip-0.5cm&&\zeta^q|\tau_{s,h}\big(V_\mu(e(u))\big)|^2dx\\
 I_1\hskip-0.5cm&&\leq c\,\delta \int_{B_{2r}(x_0)\setminus B_{r}(x_0)}
\zetaq|\tau_{s,h}\big(V_\mu(e(u))\big)|^2dx\notag\\&&
+C_{\delta,p}\int_{B_{2r}(x_0)\setminus B_{r}(x_0)}
\phi_{|e(u)|}\big(\frac 1r|\tau_{s,h}u-hQ\epsilon_s|\big)dx.
\end{eqnarray}

%In the following we shall use the assumption $p\ge 2$.
We now estimate the second term in \eqref{e:test2t}. We preliminarily note 
that by Meyers-Serrin's theorem and the Chain rule formula for Sobolev functions the
field $w:=|u-g|^{p-2}(u-g)$ belongs to $W^{1,p'}(A,\R^n)$ for every Lipschitz 
open subset $A\subseteq\Omega$. More precisely, we have
\begin{equation}\label{e:w}
 \|\nabla w\|_{L^{p'}(A,\R^{n\times n})}\leq 
 c\|u-g\|_{L^{\tilde{p}(\lambda)}(A,\R^n)}^{p-2}
 \|\nabla(u-g)\|_{L^{\lambda p}(A,\R^{n\times n})},
\end{equation}
for some constant $c=c(n,p,\lambda)>0$, for all $\lambda\in[\lambda_0,1]$ where 
$\tilde{p}$ is the function defined in \eqref{e:ptilde} and $p'=\frac p{p-1}$
(we recall that if $p=2$ then $\lambda=\lambda_0=1$).

Therefore, by \eqref{e:w}, H\"older's and Young's inequalities we may estimate 
$I_2$ for $h$ sufficiently small as follows
\begin{eqnarray}\label{e:I2}
h^{-2}I_2\hskip-0.5cm&&\leq \cost\,p
\|\triangle_{s,h}u-Q\epsilon_s\|_{L^{p}(B_{2r}(x_0),\R^n)}
\|\zetaq\triangle_{s,h}w\|_{L^{p'}(B_{2r}(x_0),\R^n)}\notag\\ &&
\leq \cost\,p\,
\|\triangle_{s,h}u-Q\epsilon_s\|_{L^{p}(B_{2r}(x_0),\R^n)}
\|\nabla w\|_{L^{p'}(B_{2r}(x_0),\R^n)}\notag\\
&&\leq 
\frac{\cost}{r^2}\int_{B_{2r}(x_0)}|\triangle_{s,h}u-Q\epsilon_s|^pdx
+\cost(p-1)\,r^{\frac 2{p-1}}\int_{B_{2r}(x_0)}|\nabla w|^{p'}dx\notag\\
&&\leq 
\frac{\cost}{r^2}\int_{B_{2r}(x_0)}|\triangle_{s,h}u-Q\epsilon_s|^pdx 
+c\,\cost\,r^{\frac 2{p-1}}\int_{B_{2r}(x_0)}\big(|u-g|^{\tilde{p}(\lambda)}+|\nabla(u-g)|^{\lambda p}\big)dx
\notag\\
\end{eqnarray}
for some $c=c(n,p,\lambda)>0$.
Hence, from inequalities \eqref{e:test2t}, \eqref{e:I1} and \eqref{e:I2} for 
$\delta=\delta(p)>0$ sufficiently small we conclude that 
\begin{multline}\label{e:intmindue}
\int_{B_{r}(x_0)}|\triangle_{s,h}\big(V_\mu(e(u))\big)|^2dx
\leq \frac{c}{h^2}%\Big(\frac{|h|}r\Big)^{p\wedge 2}
\int_{B_{2r}(x_0)\setminus B_{r}(x_0)}\phi_{|e(u)|}\big(\frac 1r
|\tau_{s,h}u-hQ\epsilon_s|\big)dx\\
+
\frac{\cost}{r^2}\int_{B_{2r}(x_0)}|\triangle_{s,h}u-Q\epsilon_s|^pdx
+c\,\cost\, r^{\frac 2{p-1}}\int_{B_{2r}(x_0)}\big(|u-g|^{\tilde{p}(\lambda)}+|\nabla(u-g)|^{\lambda p}\big)dx,
\end{multline}
with $c=c(n,p,\lambda)>0$. Finally, \eqref{a:b} and %\eqref{e:test5} 
the last inequality for sufficiently small $h$ yield for some $c=c(n,p,\lambda)>0$
 \begin{multline}\label{e:test5b}
  \int_{B_{r}(x_0)}|\triangle_{s,h}\big(V_\mu(e(u))\big)|^2dx
 \leq  %\frac{c}{\delta}
 \frac{c}{r^2}\int_{B_{2r}(x_0)\setminus B_{r}(x_0)}
 \phi_{|e(u)|}\big(|\triangle_{s,h}u-Q\epsilon_s|\big)dx\\
  +\frac{\cost}{r^2}\int_{B_{2r}(x_0)}|\triangle_{s,h}u-Q\epsilon_s|^pdx
 +c\,\cost\,r^{\frac 2{p-1}}\int_{B_{2r}(x_0)}\big(|u-g|^{\tilde{p}(\lambda)}+|\nabla(u-g)|^{\lambda p}\big)dx.
 \end{multline}
Hence, by summing on $s\in\{1,\ldots,n\}$ in inequality \eqref{e:test5b} and by 
letting $h\downarrow 0$ there, we conclude %$V_\mu(e(u))\in W^{1,2}(B_{r}(x_0))$ and thus 
\eqref{e:reg1b}. 
Furthermore, since $|\nabla V_\mu(\xi)|^2\geq c(n,p)\mu^{\sfrac p2-1}$ 
for all $\xi\in\R^{n\times n}$ if $p\ge 2$, the latter estimate, \eqref{e:reg1b} 
and a covering argument imply $u\in W^{2,2}_\loc(\Omega;\R^n)$ if $\mu>0$.

To conclude the Caccioppoli's type inequality in \eqref{e:CaccioppoliVmusuper} first observe that
\begin{equation}\label{e:Kornsuper}
 \int_{B_{2r}(x_0)}\phi_{|e(u)|}\big(|\nabla u-(\nabla u)_{B_{2r}(x_0)}|\big)dx\\
 \le c
 \int_{B_{2r}(x_0)}\phi_{|e(u)|}\big(|e(u)-(e(u))_{B_{2r}(x_0)}|\big)dx.
\end{equation}
This follows from Korn's inequality by using that if $\psi_a(t):=a^{p-2}t^2+\mu^{\sfrac p2-1}t^2 +t^p$ 
then $c^{-1}\psi_a(t)\leq \phi_a(t)\leq c\,\psi_a(t)$ for all $t\geq 0$ and for some $c=c(p)>0$. One 
inequality follows from \eqref{a:n2}, the other one is similar. Alternatively, \eqref{e:Kornsuper} follows 
directly from Korn's inequality in Orlicz spaces for shifted N-functions (cf. \cite[Lemma~2.9]{DieningKaplicky}).

Moreover, since for $p\geq 2$ by the very definition of $V_\mu$ and Lemma~\ref{l:tecnico2} 
\[
|\xi-\eta|^p\leq|V_\mu(\xi-\eta)|^2\leq c(p)|V_\mu(\xi)-V_\mu(\eta)|^2\qquad
\forall\,\xi,\,\eta\in\R^{n\times n},
\]
the standard Korn's inequality implies for some $c=c(n,p)>0$
\begin{multline}\label{e:Kornsuper2}
\int_{B_{2r}(x_0)}|\nabla u-(\nabla u)_{B_{2r}(x_0)}|^pdx\leq c
\int_{B_{2r}(x_0)}|e(u)-(e(u))_{B_{2r}(x_0)}|^pdx\\
\leq c\int_{B_{2r}(x_0)}|V_\mu\big(e(u)-(e(u))_{B_{2r}(x_0)}\big)|^2dx
\leq c\int_{B_{2r}(x_0)}|V_\mu(e(u))-V_\mu(e(u)_{B_{2r}(x_0)})|^2dx.
%c\int_{B_{2r}(x_0)}|V_\mu(e(u))|^2dx+c\mu^{\sfrac p2}r^n.
\end{multline}
Thus, by combining \eqref{e:Kornsuper} and  \eqref{e:Kornsuper2} with 
\eqref{e:reg1b}, with $Q:=(\nabla u)_{B_{2r}(x_0)}$, %(recall the notation in \eqref{e:media annulus}) 
we deduce
\begin{multline}\label{e:test0b}
 \int_{B_{r}(x_0)}|\nabla \big(V_\mu(e(u))\big)|^2dx\leq \frac c{r^2}
 \int_{B_{2r}(x_0)}\phi_{|e(u)|}\big(|e(u)-(e(u))_{B_{2r}(x_0)}|\big)dx\\
+\cost\frac{c}{r^2}\int_{B_{2r}(x_0)}|V_\mu(e(u))-V_\mu((e(u))_{B_{2r}(x_0)})|^2dx
 %+\cost\frac{c}{r^\beta}\int_{B_{2r}(x_0)}|V_\mu(e(u))|^2dx
 %+c\,\cost\,\mu^{\sfrac p2}r^{n-\beta}
 +c\,\cost\,r^{\frac 2{p-1}}\int_{B_{2r}(x_0)}\big(|u-g|^{\tilde{p}(\lambda)}+|\nabla(u-g)|^{\lambda p}\big)dx,
\end{multline}
for some constant $c=c(n,p,\lambda)>0$. Hence, by \eqref{eqphixivmu} we get from 
\eqref{e:test0b}
\begin{multline*}
\int_{B_{r}(x_0)}|\nabla \big(V_\mu(e(u))\big)|^2dx\\
\leq 
c\frac {1+\cost}{r^2}\int_{B_{2r}(x_0)}|V_\mu(e(u))-V_\mu((e(u))_{B_{2r}(x_0)})|^2dx
+c\,\cost\,r^{\frac2{p-1}}\int_{B_{2r}(x_0)}\big(|u-g|^{\tilde{p}(\lambda)}+|\nabla(u-g)|^{\lambda p}\big)dx\\
\leq c\frac {1+\cost}{r^2}
\int_{B_{2r}(x_0)}|V_\mu(e(u))-(V_\mu(e(u)))_{B_{2r}(x_0)}|^2dx
+c\,\cost\,r^{\frac 2{p-1}}\int_{B_{2r}(x_0)}\big(|u-g|^{\tilde{p}(\lambda)}+|\nabla(u-g)|^{\lambda p}\big)dx.
\end{multline*}
The last inequality follows from Lemma~\ref{l:DieningKaplicky}.
%Lemma~\ref{l:tecnico2} (see also \cite[Lemma~2.8]{DieningKaplicky} for details).
\end{proof}

In the sub-quadratic case we use a regularization argument following 
\cite[Theorem~3.2]{DieningKaplicky}. Indeed, even setting $\cost=0$,  
the same arguments as in Proposition~\ref{p:regVmusuper} lead only to 
a Besov type estimate. More precisely, the first part of the argument 
in Proposition~\ref{p:regVmusuper} up to \eqref{e:I1} included, holds for all 
$p\in (1,\infty)$ {(one only has to use $\zeta^2$ instead of $\zeta^p$ as a cutoff function)}. Thus, in case $p\in(1,2)$, arguing similarly to 
Proposition~\ref{p:regVmusuper} one deduces the ensuing estimate 
\begin{equation*}%\label{e:reg2}
  \left[V_\mu(e(u))\right]^2_{B^{\sfrac p2,2,\infty}(B_{r}(x_0))}
  \leq\frac{c}{r^p}\int_{B_{2r}(x_0)}%\setminus B_{r}(x_0)}
  \big(\mu+|\nabla u-(\nabla u)_{B_{2r}(x_0)}|^2\big)^{\sfrac p2}dx,
\end{equation*}
for some $c=c(n,p)>0$, which is not sufficient for our purposes. Recall that the Besov space 
$B^{\sfrac p2,2,\infty}(A)$, $A\subset\R^n$ open, is the space of maps $v\in L^2(A;\R^{n\times n})$ 
such that
\[
 [v]_{B^{\sfrac p2,2,\infty}(A)}:=
 \sup_{h}{|h|^{-\sfrac p2}}\sum_{s=1}^{n}\|\tau_{s,h}v\|_{L^2(A;\R^{n\times n})}<\infty.
\]
Finally, we point out that the argument we use below requires only minimal assumptions 
on $g$, namely $L^p$ summability. %\comment{Add some explanation!}
We start off with establishing a technical result.
\begin{lemma}\label{l:regVmusub}
Let $n\ge 2$, $p\in (1,2]$, $\cost$ and $\mu\geq 0$, $g\in L^p(B_{2r},\R^n)$,
and $w\in C^\infty(\overline{B}_{2r};\R^n)$. Let $u$ be the minimizer of 
\begin{equation}\label{e:Flk}
F_L(v):=\int_{B_{2r}} f_\mu(e(v)) dx +\cost\int_{B_{2r}}|v-g|^pdx
+\frac1{2L}\int_{B_{2r}} |\nabla^2v|^2 dx
\end{equation}
over the set of $w+W^{2,2}_0(B_{2r},\R^n)$. 
Then, $u\in W^{3,2}_\loc(B_{2r},\R^n)$ and there is a constant $c=c(n,p)>0$ such that {for all $\lambda\geq 0$}
\begin{multline}\label{e:st7}
\frac 1L \int_{B_{r}} |\nabla\Delta u|^2dx
+ \int_{B_{r}} |\nabla(V_\mu(e(u)))|^2dx 
\le 
\frac{c}{Lr^4}\int_{B_{2r}\setminus B_r} |\nabla u-(\nabla u)_{B_{2r}}|^2dx\\
+\frac{c}{Lr^2}\int_{B_{2r}\setminus B_r} |\nabla^2 u|^2dx
+c\,\frac{1+\cost}{r^2} 
\int_{B_{2r}}  |V_\mu(e(u))-(V_\mu(e(u)))_{B_{2r}})|^2  dx\\
+\cost\,{r^{\frac{\lambda  p}{p-1}}}\int_{B_{2r}}|u-g|^pdx
+c\,\frac{\cost}{{{r^{\frac{2 \lambda p}{2-p}}}}}\Big(\int_{B_{2r}}|V_\mu(e(u))|^2dx
+\mu^{\sfrac p2}r^n\Big).
\end{multline}
\end{lemma}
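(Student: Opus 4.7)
The plan is to derive \eqref{e:st7} by testing the Euler--Lagrange equation of $F_L$ with a second-order difference-quotient test function in the spirit of Nirenberg's method, as in Proposition~\ref{p:regVmusuper}, but using the biharmonic-type regularization $\frac{1}{L}\int|\nabla^2 u|^2$ both to legitimize higher-order test functions and to absorb the residual contributions produced by the singular lower-order term when $p\in(1,2]$. First, I would write the EL equation: for all $\varphi\in W^{2,2}_0(B_{2r};\R^n)$,
\[
\int\langle\nabla f_\mu(e(u)),e(\varphi)\rangle\,dx+\cost p\int|u-g|^{p-2}\langle u-g,\varphi\rangle\,dx+\frac{1}{L}\int\langle\nabla^2 u,\nabla^2\varphi\rangle\,dx=0.
\]
Since the principal part $\frac{1}{L}\Delta^2$ is uniformly fourth-order elliptic with constant coefficients, a standard difference-quotient bootstrap yields $u\in W^{3,2}_\loc(B_{2r};\R^n)$, making the test functions below admissible.

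Second, I would fix $\zeta\in C_c^\infty(B_{2r})$ with $\zeta\equiv 1$ on $B_r$ and $|\nabla^k\zeta|\le c\,r^{-k}$ for $k=0,1,2$, and test the EL with $\varphi_h:=-\triangle_{s,-h}(\zeta^4\triangle_{s,h}u)$. Transferring the difference quotient as in Proposition~\ref{p:regVmusuper} and using $\triangle_{s,h}\nabla f_\mu(e(u))=\mathbb{A}_{s,h}\triangle_{s,h}e(u)$, together with (Conv) and Jensen, produces from the elastic part a leading piece bounded below by $c\int\zeta^4|\triangle_{s,h}V_\mu(e(u))|^2$; the cross terms involving $\nabla\zeta$ are handled exactly as in the argument of Proposition~\ref{p:regVmusuper} (which, by the remark preceding this lemma, carries over to all $p>1$ with $\zeta^2$ in place of $\zeta^p$), via the shifted $N$-function $\phi_{|e(u)|}$, Young's inequality \eqref{a:e2}, the inequality \eqref{eqphixivmu}, and Lemma~\ref{l:DieningKaplicky}; they contribute the $c(1+\cost)r^{-2}\int|V_\mu(e(u))-(V_\mu(e(u)))_{B_{2r}}|^2$ piece of \eqref{e:st7}. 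Expanding $\nabla^2(\zeta^4\triangle_{s,h}u)$ by Leibniz, the regularization term produces the principal piece $\frac{1}{L}\int\zeta^4|\triangle_{s,h}\nabla^2 u|^2$ (which, upon $h\to 0$, summing over $s$, and using $|\nabla\Delta u|^2\le n|\nabla^3 u|^2$, becomes $\frac{1}{L}\int_{B_r}|\nabla\Delta u|^2$ on the left), plus cross terms that, after Young's inequality and Poincaré, give the annular contributions $\frac{c}{Lr^4}\int_{B_{2r}\setminus B_r}|\nabla u-(\nabla u)_{B_{2r}}|^2+\frac{c}{Lr^2}\int_{B_{2r}\setminus B_r}|\nabla^2 u|^2$.

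The main new difficulty is the lower-order contribution $\cost p\int\langle|u-g|^{p-2}(u-g),\varphi_h\rangle\,dx$. Because $g$ is only $L^p$ and $|s|^{p-2}s$ is singular at the origin for $p<2$, neither transferring the difference quotient onto the non-linearity nor integrating by parts in $s$ is available; instead one bounds this term directly by
\[
c\,\cost\int|u-g|^{p-1}\bigl(r^{-1}|\triangle_{s,h}u|+|\triangle_{s,h}\nabla u|\bigr)\,dx,
\]
using $|\varphi_h|\le c(r^{-1}|\triangle_{s,h}u|+|\triangle_{s,h}\nabla u|)$ from the product rule for difference quotients. Young's inequality with a scale-dependent weight $r^{\pm\gamma(\lambda)}$ is then applied to pair $|u-g|^{p-1}$ with $|\triangle_{s,h}u|$ through the conjugate exponents $p/(p-1)$ and $p$, producing the piece $c\,\cost\,r^{\lambda p/(p-1)}\int|u-g|^p\,dx$ together with a residual power of $|\nabla u|$; this residual is converted into $|V_\mu(e(u))|^2$ via Korn's inequality and the sub-quadratic estimate $|\xi|^p\le|V_\mu(\xi)|^2+c\mu^{p/2}$ (cf.~\eqref{e:stima banana}), yielding $c\,\cost\,r^{-2\lambda p/(2-p)}\bigl(\int|V_\mu(e(u))|^2\,dx+\mu^{p/2}r^n\bigr)$. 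The remaining piece of the lower-order block, involving $|\triangle_{s,h}\nabla u|^2$, is absorbed by a small fraction of the leading regularization term $\frac{1}{L}\int\zeta^4|\triangle_{s,h}\nabla^2 u|^2$; this is the reason why the coefficient of the ``bad'' $|V_\mu|^2$ term in \eqref{e:st7} carries $\cost$ but not an additional $\frac{1}{L}$ factor.

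The principal obstacle is precisely this Young-based splitting: simultaneously producing the ``good'' weighted $L^p$-norm of $u-g$, converting the spurious $|\nabla u|$-residual into $|V_\mu(e(u))|^2$, and leaving an absorbable second-derivative residual requires a careful choice of conjugate exponents and scale weights, and the free tuning parameter $\lambda\ge 0$ is what controls this trade-off. The inequality $|\xi|^p\le|V_\mu(\xi)|^2+c\mu^{p/2}$ for $p<2$ is essential for both the $|V_\mu|^2$-conversion and the appearance of the $\mu^{p/2}r^n$ tail on the right-hand side of \eqref{e:st7}.
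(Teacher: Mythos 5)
Your overall strategy broadly matches the paper's: establish $u\in W^{3,2}_\loc$ first, then test the Euler--Lagrange equation with a cutoff version of a second derivative of $u-Qx$, identify the two leading positive terms $\frac1L\int\zeta^q|\nabla\Delta u|^2$ and $\int\zeta^q|\nabla V_\mu(e(u))|^2$, and estimate the cross terms by Young's inequality and the shifted $N$-function machinery (the paper uses a direct test function $\psi=\sum_s\partial_s(\zeta^q\partial_s(u-Qx))$ once $W^{3,2}_\loc$ regularity is known, rather than a difference quotient, but this is cosmetic). However, there is a genuine error in your treatment of the lower-order term that would cause the argument to fail.

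You claim that the residual from $\int|u-g|^{p-2}\langle u-g,\varphi_h\rangle$ involving second derivatives of $u$ is ``absorbed by a small fraction of the leading regularization term $\frac1L\int\zeta^4|\triangle_{s,h}\nabla^2u|^2$.'' This cannot work, for two independent reasons. First, the orders mismatch: after testing, the regularization produces a \emph{third}-derivative quantity $\frac1L\int\zeta^q|\nabla\Delta u|^2$, whereas the lower-order residual carries only $|\nabla^2u|^p$, and these cannot be compared by absorption. Second, and more fundamentally, the regularization term carries the factor $\frac1L$, which must remain a free positive quantity uniformly as $L\uparrow\infty$: in Proposition~\ref{p:regVmusub} the estimate \eqref{e:st7} is applied after taking $L\to\infty$, in which limit the entire $\frac1L$ block disappears. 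Any cross term absorbed into $\frac1L\int\zeta^q|\nabla\Delta u|^2$ would therefore have to itself carry a $\frac1L$ prefactor, which the lower-order residual does not. Your subsequent explanation for why the $|V_\mu|^2$ coefficient ``carries $\cost$ but not $\frac1L$'' is consequently backwards.

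The correct mechanism is what the paper does for the term $I_{3,3}$: use the algebraic identity $\partial_s^2 v_j = 2\partial_s([e(v)]_{sj})-\partial_j([e(v)]_{ss})$ to convert $|\nabla^2u|^p$ into $|\nabla e(u)|^p$, then invoke Lemma~\ref{l:tecnico2} (which gives $|\nabla e(u)|\le c\,|\nabla V_\mu(e(u))|\,(\mu+|e(u)|^2)^{(2-p)/4}$) together with Young's inequality at exponents $(\frac2p,\frac2{2-p})$ to split it as $\delta\int\zeta^q|\nabla V_\mu(e(u))|^2+C_\delta r^{-\frac{2\lambda p}{2-p}}\int(\mu+|e(u)|^2)^{p/2}$; the $\delta$-piece is absorbed into the $L$-independent \emph{elastic} leading term, and the remainder is converted via Remark~\ref{r:Vmu} to $\int|V_\mu(e(u))|^2+\mu^{p/2}r^n$. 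Without this step your argument cannot produce the $\cost\,r^{-\frac{2\lambda p}{2-p}}(\int|V_\mu(e(u))|^2+\mu^{p/2}r^n)$ tail on the right of \eqref{e:st7}, nor survive the $L\to\infty$ limit. A further minor gap: your one-line $W^{3,2}_\loc$ bootstrap glosses over the fact that $\nabla f_\mu$ is not $C^1$ at the origin when $\mu=0$, $p<2$, so the identity $\triangle_{s,h}\nabla f_\mu(e(u))=\mathbb{A}_{s,h}\triangle_{s,h}e(u)$ is unavailable; the paper circumvents this by introducing $\alpha^s=\int_0^1\nabla f_\mu(e(u(\cdot+th\epsilon_s)))dt$ and using $\triangle_{s,h}\nabla f_\mu(e(u))=\partial_s\alpha^s$, which requires only $C^0$ of $\nabla f_\mu$.
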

\begin{proof} 
We first prove that $u\in W^{3,2}_\loc(B_{2r},\R^n)$. 
Given $V\subset\subset B_{2r}$, 
we set $d:=\min\{1,\dist(V,\partial B_{2r})\}$ and take $h\leq d/2$. 
For $\rho\in (0,d/2)$ 
we consider the function
\[g(\rho):=\sup\left\{\frac1L \int_{B_\rho(y)}|\triangle_{s,h}\nabla^2 u|^2dx:\, y\in V\right\}.\]
Next we prove that there exists a constant $c>0$ independent from $h$ (but possibly depending on $L$) 
such that
\begin{equation}\label{e:dec}
g(\rho)\leq \frac {g(\rho')}{2}+ \frac{c}{(\rho'-\rho)^4}
+\frac{c\,\cost}{\rho'-\rho},
\end{equation}
for $\rho,\rho'\in (0,d/2)$, $\rho<\rho'$.

Fix $\rho,\rho'$ as above, $y\in V$, and consider
$\zeta\in C^\infty_c(B_{\rho'}(y))$, with $\zeta=1$ on $B_\rho(y)$ and 
$|\nabla^2 \zeta|\leq c/(\rho'-\rho)^2$. We now test 
\begin{equation}\label{e:ulk}
\frac1L\int_{B_{2r}}\langle \nabla^2 u, \nabla^2 \varphi\rangle dx + 
\int_{B_{2r}} \langle \nabla f_\mu(e(u)), e(\varphi)\rangle dx
+\cost\,p\int_{B_{2r}}|u-g|^{p-2}\langle u-g,\varphi\rangle\,dx=0\,,
\end{equation}
 holding for every $\varphi\in C^\infty_c(B_{2r};\R^n)$, with the test function 
 $\varphi:=\triangle_{s,-h}(\zeta\triangle_{s,h}u)$ and we estimate each appearing term.

First note that
\[\int_{B_{\rho'}(y)}\langle \nabla^2 u, \nabla^2 \varphi\rangle dx=-\int_{B_{\rho'}(y)}\langle \triangle_{s,h}\nabla^2 u, \zeta\nabla^2 \triangle_{s,h}u+z\rangle dx,\]
where the function $z$ satisfies
\[\|z\|_{L^2(B_{\rho'}(y))}\leq \frac{c}{(\rho'-\rho)^2}\|u\|_{W^{2,2}(B_{2r})}.\]
Therefore by Young's inequality we obtain
\begin{equation}\label{e:st1}
-\int_{B_{\rho'}(y)}\langle \triangle_{s,h}\nabla^2 u, z\rangle dx\leq \frac12 \int_{B_{\rho'}(y)}|\triangle_{s,h}\nabla^2 u|^2dx+\frac{c}{(\rho'-\rho)^4}\|u\|^2_{W^{2,2}(B_{2r})}.
\end{equation}
Moreover we have
\[
\int_{B_{\rho'}(y)}\langle \triangle_{s,h}\nabla^2 u, \zeta\nabla^2 \triangle_{s,h}u\rangle dx\geq \int_{B_{\rho}(y)}|\triangle_{s,h}\nabla^2 u|^2dx,
\]
so that by \eqref{e:st1}
\begin{multline}\label{e:st4}\int_{B_{\rho'}(y)}\langle \nabla^2 u, \nabla^2 \varphi\rangle dx\leq \frac12 \int_{B_{\rho'}(y)}|\triangle_{s,h}\nabla^2 u|^2dx+
\frac{c}{(\rho'-\rho)^4}\|u\|^2_{W^{2,2}(B_{2r})}\\
-\int_{B_{\rho}(y)}|\triangle_{s,h}\nabla^2 u|^2dx.
\end{multline}
On the set $(B_{2r})_{s,h}$ (recall the notation introduced right after \eqref{e:rapporti incrementali}) 
we define %{è stato definito $(B_{2r})_{s,h}$? FI}
\begin{equation*}
\alpha^s(x):=\int_0^1 \nabla f_\mu(e(u(x+th\epsilon_s))) dt
\end{equation*}
and observe that (for $\mu>0$)
\begin{multline}\label{e:st5}
 \triangle_{s,h}\big(\nabla f_\mu(e(u))\big)
=\frac1h \int_0^h \frac{d}{dt} \big(\nabla f_\mu(e(u(x+t\epsilon_s)))\big)dt\\
 =\frac1h \int_0^h \partial_{s} \big(\nabla f_\mu(e(u(x+t\epsilon_s)))\big)dt
=\frac1h \partial_s \int_0^h  \big(\nabla f_\mu(e(u(x+t\epsilon_s)))\big){dt}=\partial_{s} \alpha^s\,.
\end{multline}
By continuity one obtains $ \triangle_{s,h}\big(\nabla f_\mu(e(u))\big)=\partial_{s} \alpha^s$ 
also for $\mu=0$. Therefore we estimate
\begin{multline}\label{e:st3}
\int_{B_{\rho'}(y)}\langle\nabla f_\mu(e(u)),e(\varphi)\rangle dx = 
-\int_{B_{\rho'}(y)}\langle \triangle_{s,h}\big(\nabla f_\mu(e(u))\big),
\zeta\triangle_{s,h}e(u)\rangle dx\\
-\int_{B_{\rho'}(y)}\langle \triangle_{s,h}\big(\nabla f_\mu(e(u))\big),
\nabla\zeta\odot\triangle_{s,h}u\rangle dx\\
	%\int_\Omega\langle \triangle_{s,h}\big(\nabla f_\mu(e(u))\big),
	%\nabla\eta\odot\triangle_{s,h}u
	%\rangle dx\\
	%=
%-\int_{B_{\rho'}(y)}\zeta|\triangle_{s,h}\big(V_\mu(e(u))\big)|^2dx
 \leq \int_{B_{\rho'}(y)}\langle \alpha^s,
\partial_s\nabla\zeta\odot\triangle_{s,h}u\rangle dx
+\int_{B_{\rho'}(y)}\langle \alpha^s,
\nabla\zeta\odot\triangle_{s,h}\partial_s u\rangle dx\,,
\end{multline}
where we have used \eqref{e:tshf1}, \eqref{e:wsh} and 
%$|\nabla V_\mu(\xi)|\le c\,(\mu+|\xi|^2)^{(p-2)/4}$ to estimate the former term in the first inequality, and 
\eqref{e:st5}. % to write the latter. 
Since $u\in W^{2,2}(B_{2r}(y),\R^n)$ we conclude {with \eqref{e:growth condition nabla}}
\begin{multline}\label{e:st6}
\int_{B_{\rho'}(y)}\langle\nabla f_\mu(e(u)),e(\varphi)\rangle dx\leq 
 \frac{c}{(\rho'-\rho)^2} \|u\|_{W^{1,p}(B_{2r})} \|\alpha^s\|_{L^{p'}(B_{\rho'}(y))}\\+
\frac{c}{\rho'-\rho} \|u\|_{W^{2,p}(B_{2r})} \|\alpha^s\|_{L^{p'}(B_{\rho'}(y))} 
\leq\frac{c}{(\rho'-\rho)^2} \|u\|_{W^{2,p}(B_{2r})} \|\mu^{1/2}+|e(u)|\|_{L^{p}(B_{2r})}^{\sfrac p{p'}}. 
\end{multline}
Eventually, by H\"older's inequality and the standard properties of difference quotients 
we can estimate the last term on the left hand side of \eqref{e:ulk} as follows:
\begin{multline}\label{e:st6b}
\int_{B_{2r}}|u-g|^{p-2}\langle u-g,\varphi\rangle\,dx\leq 
\|u-g\|_{L^p(B_{2r})}^{p-1}
\Big(\frac{1}{\rho'-\rho}\|\nabla u\|_{L^p(B_{2r})}+\|\nabla^2u\|_{L^p(B_{2r})}\Big).
\end{multline}
Estimates \eqref{e:ulk}, \eqref{e:st4}, \eqref{e:st6} and \eqref{e:st6b} yield
\[
\int_{B_{\rho}(y)}|\triangle_{s,h}\nabla^2 u|^2dx\leq
\frac12 \int_{B_{\rho'}(y)}|\triangle_{s,h}\nabla^2 u|^2dx+
\frac{c}{(\rho'-\rho)^4}+\frac{c\,\cost}{\rho'-\rho},
\]
for a constant $c>0$ depending on $n$, $p$, $k$, {$L$}, the $W^{2,2}$ norm of $u$ and 
the $L^p$ norm of $g$. Then, \eqref{e:dec} follows at once since $y\in V$ is arbitrary. 
By this, \cite[Lemma~6.1]{Giusti}, and the compactness of $\overline{V}$ we finally infer
\[
\int_V|\triangle_{s,h}\nabla^2 u|^2dx\leq c,
\]
with $c$ independent from $h$, and therefore $u\in W^{3,2}_{\loc}(B_{2r},\R^n)$.
 
Let us now prove \eqref{e:st7}. Using $u\in W^{3,2}_\loc(B_{2r},\R^n)$ and the fact that 
$e(\varphi)$ has average zero for every $\varphi\in W^{1,2}_0(B_{2r},\R^n)$, we 
can rewrite \eqref{e:ulk} as
\begin{multline}\label{eqEL32}
\frac1L\int_{B_{2r}} \langle \nabla \Delta u, \nabla \varphi \rangle dx \\
= \int_{B_{2r}} \langle \nabla f_\mu(e(u))-\nabla f_\mu(e(Qx)), e(\varphi)\rangle dx
+\cost\,p\int_{B_{2r}}|u-g|^{p-2}\langle u-g,\varphi\rangle dx\,,
\end{multline}
for any $Q\in \R^{n\times n}$.
%, where we write with abuse of notation $e(Q)=(Q+Q^T)/2$.

Let  now $\psi:=\sum_{s=1}^n\partial_s (\zeta^q \partial_s (u-Qx))$, where 
$q\ge 4$, let $\zeta\in C^\infty_c(B_{3r/2};[0,1])$ obey $\zeta=1$ on $B_r$ 
and $|\nabla \zeta|\le c/r$. Since $u\in W^{3,2}_\loc(B_{2r},\R^n)$, $\psi$ 
can be strongly approximated in $W^{1,2}(B_{2r},\R^n)$ by smooth functions 
supported in $B_{3r/2}$; therefore we can use $\varphi=\psi$ as a trial 
function in (\ref{eqEL32}).

We now estimate the three terms in  (\ref{eqEL32}).
We start from the second one, which we write as
\begin{equation*}
I_2:= \int_{B_{2r}} \langle B, e(\psi)\rangle dx
\end{equation*}
where
\begin{equation*}
B(x):=\nabla f_\mu(e(u))-\nabla f_\mu(e(Qx))
=\int_0^1\nabla^2 f_\mu(e(Qx)+t e(\tilde u)(x)) e(\tilde u)(x) dt
\end{equation*}
and $\tilde u(x):=u(x)-Qx$.
We estimate, using (\ref{e:bdhessf}) and Lemma \ref{l:tecnico},
\begin{align*}
|B|&\le \int_0^1(\mu+ |e(Qx)+t e(\tilde u)|^2)^{p/2-1} | e(\tilde u)|dt\\
&\le c(\mu+ |e(Qx)|^2+ |e(u)|^2)^{p/2-1} | e(\tilde u)|\\
&\le c(\mu+ (|e(Qx)|+ |e(\tilde u)|)^2)^{p/2-1} | e(\tilde u)|
=c\, \phi'_{|e(Qx)|} (| e(\tilde u)|)
\end{align*}
for $\mu>0$. By continuity, $|B|\le c\, \phi'_{|e(Qx)|} (| e(\tilde u)|)$ 
holds also for $\mu=0$. We compute 
\begin{equation*}
e(\psi)=\sum_{s=1}^n(\partial_s \nabla \zeta^q)\odot \partial_s \tilde u
+\nabla \zeta^q \odot \partial_s^2 \tilde u
+ \partial_s (\zeta^q \partial_s e(\tilde u))\,.
\end{equation*}
We estimate the three contributions to $I_2$ separately. Recalling the estimate for $B$, we obtain
\begin{align*}
|I_{2,1}|\le& \int_{B_{2r}\setminus B_r}  |B| \frac{c {q^2}}{r^2} | \nabla \tilde u| dx
\le  \frac{c{q^2}}{r^2} \int_{B_{2r}\setminus B_r}  \phi'_{|e(Qx)|} (| e(\tilde u)|) | \nabla \tilde u| dx\\
\le & \frac{c{q^2}}{r^2} \int_{B_{2r}\setminus B_r}  \phi'_{|e(Qx)|} (| \nabla \tilde u|) | \nabla \tilde u| dx
\le  \frac{c{q^2}}{r^2} \int_{B_{2r}\setminus B_r}  \phi_{|e(Qx)|} (| \nabla \tilde u|)  dx
\end{align*}
where we used monotonicity of $\phi_a'$ and (\ref{a:e}).
Using Korn's inequality for shifted N-functions (cf. \cite[Lemma~2.9]{DieningKaplicky} {or \eqref{e:Kornsuper}})
and choosing $Q:=(\nabla u)_{B_{2r}}$ we conclude
\begin{align*}
|I_{2,1}| \le  \frac{c{q^2}}{r^2} \int_{{B_{2r}}}  \phi_{|e(Qx)|} (| e( \tilde u)|)  dx
\le   \frac{c{q^2}}{r^2} \int_{{B_{2r}}}  |V_\mu(e( u))-V_\mu(e(Qx))|^2  dx
\end{align*}
{where in the last step we used 
	%Lemma \ref{l:tecnico2} and 
(\ref{eqphixivmu}).}

For the second one, we use that for any function $v$ in $W^{2,2}_\loc$ one has
\begin{equation}\label{e:trick}
\partial_s^2 v_j = 2\partial_s ([e(v)]_{sj}) -\partial_j ([e(v)]_{ss}), 
\end{equation}
here $[e(v)]_{hk}$ denotes the entry of position $(h,k)$ of the matrix $e(v)$, 
to obtain
\begin{align*}
|I_{2,2}|\le& \frac{cq}{r} \int_{B_{2r}\setminus B_r} |B| \zeta^{q-1} |\Delta \tilde u| dx\\
\le &\frac{cq}{r}\int_{B_{2r}\setminus B_r}   \phi'_{|e(Qx)|} (| e(\tilde u)|) \zeta^{q-1} |\nabla e(\tilde u)| dx.
\end{align*}
Recalling (\ref{eqphiprimophisecondo}), choosing $q\ge 2$ and since 
$0\leq \zeta\le 1$, we deduce by Young's inequality
\begin{align*}
|I_{2,2}|\le& \frac{cq}{r}\int_{B_{2r}\setminus B_r}   \phi''_{|e(Qx)|} (|e(\tilde u)|) \zeta^{q-1} | e(\tilde u)| \, |\nabla e(\tilde u)| dx\\
\le &  \delta \int_{B_{2r}\setminus B_r} \zeta^{q}  \phi''_{|e(Qx)|} (| e(\tilde u)|)   |\nabla e(\tilde u)| ^2dx\\
&+ \frac{c}{r^2}\int_{B_{2r}\setminus B_r}   \phi''_{|e(Qx)|} (| e(\tilde u)|) | e(\tilde u)| ^2dx\,,
\end{align*}
with $c=c(\delta,q)>0$ and $\delta\in(0,1)$ to be chosen below.
Hence, recalling 
$|\nabla V_\mu(\xi)|^2\ge c(\mu+|\xi|^2)^{\sfrac p2-1}$ and $\phi_a''(|t-a|)|t-a|^2\le c|V_\mu(t)-V_\mu(a)|^2$ 
(see Lemma~\ref{l:tecnico2} and the definition of $\phi_a$), we infer 
\[
|I_{2,2}|\le  \delta \int_{B_{2r}\setminus B_r}   \zeta^{q} |\nabla( V_\mu(e( u)))|^2dx+ \frac{c}{r^2}\int_{B_{2r}\setminus B_r}   |V_\mu(e(u))-V_\mu(e(Qx))|^2dx\,.
\]
Finally, to deal with the last term $I_{2,3}$ we integrate by parts. Since 
$\partial_s B=\nabla^2 f_\mu(e(u))\partial_s e(u)$, recalling (\ref{e:bdhessf})
and the definition of $V_\mu$
\begin{align*}
-I_{2,3}=&\int_{B_{2r}}\zeta^q \sum_{s=1}^n\langle \nabla^2 f_\mu(e(u))\partial_s e(u), \partial_s e(u)\rangle dx\\
\ge& c\int_{B_{2r}} \zeta^q(\mu+|e(u)|^2)^{p/2-1} |\nabla e(u)|^2dx 
\ge c\int_{B_{2r}} \zeta^q|\nabla (V_\mu(e(u)))|^2dx \,,
\end{align*}
with $c=c(p)>0$.

We now turn to the first term in \eqref{eqEL32}, 
\begin{equation*}
I_1:=\int_{B_{2r}} \langle\nabla \Delta u, \nabla \psi\rangle dx\,.
\end{equation*}
Again we consider separately the contributions of the different components of $\nabla\psi$,
\begin{equation*}
\nabla\psi=\sum_{s=1}^n\partial_s \tilde u\otimes (\partial_s \nabla \zeta^q)
+ \partial_s^2 \tilde u \otimes \nabla \zeta^q
+ (\partial_s \zeta^q) \partial_s \nabla \tilde u
+  \zeta^q \partial_s^2 \nabla \tilde u\,.
\end{equation*}
The first term is controlled by
\begin{align*}
|I_{1,1}|\le &  \frac{c}{r^2} 
\int_{B_{2r}\setminus B_r}|\nabla \Delta u|\zeta^{q-2} |\nabla \tilde u|dx\\
\le& \delta \int_{B_{2r}\setminus B_r}\zeta^q |\nabla \Delta u|^2 dx
+\frac{c}{r^4}\int_{B_{2r}\setminus B_r} |\nabla \tilde u|^2dx
\end{align*}
for some $c=c(q,\delta)>0$, provided that $q-2\ge q/2$, namely 
$q\geq 4$.
The second and the third terms are controlled, for some $c=c(q,\delta)>0$, by
\begin{align*}
|I_{1,2}+I_{1,3}|\le & \frac{c}{r}\int_{B_{2r}\setminus B_r}|\nabla \Delta u|  \zeta^{q-1} |\nabla^2 \tilde u|dx\\
\le& \delta \int_{B_{2r}\setminus B_r}\zeta^q |\nabla \Delta u|^2 dx
+\frac{c}{r^2}\int_{B_{2r}\setminus B_r} |\nabla^2 \tilde u|^2dx\,.
\end{align*}
The fourth summand in $I_1$ is
\begin{equation*}
I_{1,4}:=\int_{B_{2r}}{\zeta^q} \nabla\Delta u\cdot  \nabla\Delta udx\,.
\end{equation*}
We deal with the remaining term in \eqref{eqEL32}
\[
I_3:=\cost\,p\int_{B_{2r}}|u-g|^{p-2}\langle u-g,
\sum_{s=1}^n\partial_s(\zeta^q\partial_s\tilde{u})\rangle dx.
\]
H\"older's and Young's inequalities together with \eqref{e:trick} yield for some
constant $c=c(p,q)>0$ 
\[%\begin{multline*}
\cost^{-1}I_3\leq 
{r^{\frac{\lambda p}{p-1}}}\int_{B_{2r}}|u-g|^pdx+\frac c{{r^{(\lambda+1)p}}}\int_{B_{2r}\setminus B_r}
|\nabla\tilde u|^pdx
+\frac c{{r^{\lambda p}}}\int_{B_{2r}}\zeta^q|\nabla e(u)|^pdx.
\]%\end{multline*}
Recalling that we have chosen $Q=(\nabla u)_{B_{2r}}$,
 %and the notation $e(Q)=\frac12(Q+Q^t)$, 
 apply 
%(recall the notation in \eqref{e:media annulus})
Korn's inequality {to obtain
\begin{equation*}
\cost^{-1}I_{3,2} \leq\frac c{{r^{(\lambda+1)p}}}\int_{B_{2r}}|e(u)-e(Qx)|^pdx .
\end{equation*}
From} Lemma~\ref{l:tecnico2} 
{we obtain
\begin{equation*}
 |\xi-\eta|^p \le c |V(\xi)-V(\eta)|^p(\mu+|\xi|^2+|\eta|^2)^{\sfrac{p(2-p)}4} ,
\end{equation*}
using Young's inequality }
and 
Remark~\ref{r:Vmu} we conclude that
\begin{align*}
\cost^{-1}I_{3,2}  & \leq\frac 1{r^2}\int_{B_{2r}}|V_\mu(e(u))-V_\mu(e(Qx))|^2dx+
{\frac c{{r^{\frac{2\lambda p}{2-p}}}}}\int_{B_{2r}}(\mu+|e(u)|^2)^{\sfrac p2}dx\\
& \leq\frac 1{r^2}\int_{B_{2r}}|V_\mu(e(u))-V_\mu(e(Qx))|^2dx+
\frac c{{r^{\frac{2\lambda p}{2-p}}}}\Big(\int_{B_{2r}}|V_\mu(e(u))|^2dx
+\mu^{\sfrac p2}r^n\Big).
\end{align*}
with $c=c(p)>0$. Furthermore, again by Lemma \ref{l:tecnico2},
Young's inequality and Remark \ref{r:Vmu} we have that
\[
\cost^{-1}I_{3,3}\leq \delta\int_{B_{2r}}\zeta^q
|\nabla\big(V_\mu(e(u))\big)|^2dx+
{\frac c{{r^{\frac{2\lambda p}{2-p}}}}}\Big(\int_{B_{2r}}|V_\mu(e(u))|^2dx
+\mu^{\sfrac p2}r^n\Big).
%c\int_{B_{2r}}(\mu+|e(u)|^2)^{\sfrac p2}dx,
\]
for some $c=c(\delta, p)>0$. Therefore, we deduce that
\begin{align*}
\cost^{-1}I_3 \leq & \,{r^{\frac{\lambda p}{p-1}}}\int_{B_{2r}}|u-g|^pdx
+\frac 1{r^2}\int_{B_{2r}}
|V_\mu(e(u))-V_\mu(e(Qx))|^2dx+\\
&+\delta\int_{B_{2r}}\zeta^q|\nabla\big(V_\mu(e(u))\big)|^2dx
+{\frac c{{{r^{\frac{2\lambda p}{2-p}}}}}}\Big(\int_{B_{2r}}|V_\mu(e(u))|^2dx
+\mu^{\sfrac p2}r^n\Big).
\end{align*}

Finally, we rewrite (\ref{eqEL32}) as
\begin{align*}
\frac1L I_{1,4}-I_{2,3}\le \frac1L|I_{1,1}|+\frac1L|I_{1,2}+I_{1,3}|+I_{2,1}+I_{2,2}+I_3\,.
\end{align*}
Choosing $q\geq 4$ and $\delta\in(0,\sfrac 14]$, 
for some constant $c=c(n,p)>0$ we have that %{come lo applichi il Lemma 2.4?}
\begin{align*} 
&\frac1L \int_{B_{2r}} \zeta^q |\nabla\Delta u|^2dx
+ \int_{B_{2r}}\zeta^q |\nabla\big(V_\mu(e(u))\big) |^2dx \\
\le& 
\frac{c}{Lr^4}\int_{B_{2r}\setminus B_r} |\nabla \tilde u|^2dx
+\frac{c}{Lr^2}\int_{B_{2r}\setminus B_r} |\nabla^2 \tilde u|^2dx
\\
&+c\,\frac{1+\cost}{r^2}
\int_{B_{2r}}|V_\mu(e(u))-V_\mu(e(Qx))|^2dx
\\
&+\cost\,{r^{\frac{\lambda p}{p-1}}}\int_{B_{2r}}|u-g|^pdx
+c\,{\frac{\cost}{{{r^{\frac{2\lambda p}{2-p}}}}}}\Big(\int_{B_{2r}}|V_\mu(e(u))|^2dx
+\mu^{\sfrac p2}r^n\Big),
\end{align*}
and \eqref{e:st7} follows at once from Lemma~\ref{l:DieningKaplicky}.
%\cite[Lemma~2.8]{DieningKaplicky}.
\end{proof}
We are now ready to prove the Caccioppoli's inequality in the sub-quadratic case.
\begin{proposition}\label{p:regVmusub}
Let $n\ge 2$, $p\in (1,2]$, $\cost$ and $\mu\geq 0$ and $g\in L^p(\Omega;\R^n)$.
Let $u\in W^{1,p}(\Omega;\R^n)$ be a local minimizer of $\FF$ defined in \eqref{e:FF}.

Then, $V_\mu(e(u))\in W^{1,2}_\loc(\Omega;\R^{n\times n}_\sym)$ and 
$u\in W^{2,p}_\loc(\Omega;\R^n)$.
More precisely, there is a constant $c=c(n,p)>0$ 
such that if $B_{2r}(x_0)\subset\Omega$ 
{and $\lambda\geq 0$}
\begin{multline}\label{e:CaccioppoliVmusub}
\int_{B_{r}(x_0)}|\nabla \big(V_\mu(e(u))\big)|^2dx
\leq c\frac{1+\cost}{r^2}\int_{B_{2r}(x_0)}
|V_\mu(e(u))-(V_\mu(e(u)))_{B_{2r}(x_0)}|^2dx\\
+\cost\,{r^{\frac{\lambda p}{p-1}}}\int_{B_{2r}(x_0)}|u-g|^pdx+
c\,\frac{\cost}{{r^{\frac{2\lambda p}{2-p}}}}\Big(
\int_{B_{2r}(x_0)}|V_\mu(e(u))|^2dx+
\mu^{\sfrac p2}r^n\Big),
\end{multline}
and
\begin{equation}\label{e:eureg}
\int_{B_r(x_0)}|\nabla(e(u))|^pdx
\leq c\Big(\int_{B_r(x_0)}|\nabla(V_\mu(e(u)))|^2dx\Big)^{\frac p2}
\Big(\int_{B_r(x_0)}(\mu+|e(u)|^2)^{\frac p2}dx\Big)^{1-\frac p2}.
\end{equation}
\end{proposition}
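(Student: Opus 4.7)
The plan is to obtain \eqref{e:CaccioppoliVmusub} by regularizing $u$ via the functionals $F_L$ in \eqref{e:Flk}, applying Lemma~\ref{l:regVmusub} to their minimizers (which enjoy the extra $W^{3,2}_\loc$ regularity needed to carry out the test-function calculation rigorously), and then letting $L\to\infty$. As recalled just before the statement, a direct difference-quotient argument on $u$ yields only a Besov-type bound for $p\in(1,2)$, so the regularization is essential.

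\textbf{Regularization.} Fix $B_{2r}(x_0)\subset\Omega$ and let $u^\varepsilon:=u\ast\rho_\varepsilon$ be a standard mollification, smooth on the ball for $\varepsilon$ small. For each $L>0$ I define $u_L^\varepsilon$ as the unique minimizer of $F_L$ over $u^\varepsilon+W^{2,2}_0(B_{2r}(x_0);\R^n)$; existence and uniqueness follow from strict convexity and coercivity of $F_L$ in $W^{2,2}$. Lemma~\ref{l:regVmusub} then gives \eqref{e:st7} for $u_L^\varepsilon$.

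\textbf{Double limit.} From $F_L(u_L^\varepsilon)\le F_L(u^\varepsilon)$ I extract the uniform bounds $\FF(u_L^\varepsilon)\le\FF(u^\varepsilon)$ and $\frac{1}{L}\|\nabla^2 u_L^\varepsilon\|_{L^2(B_{2r}(x_0))}^2\le C_\varepsilon$, independent of $L$. Hence, up to subsequences, $u_L^\varepsilon\rightharpoonup\bar u^\varepsilon$ in $W^{1,p}$ as $L\to\infty$, and by convexity $\bar u^\varepsilon$ minimizes $\FF$ on $u^\varepsilon+W^{1,p}_0(B_{2r}(x_0);\R^n)$. In \eqref{e:st7}, the two $\frac{1}{L}$-weighted terms on the right-hand side vanish in the limit thanks to the uniform bound, the nonlinear integrals involving $V_\mu(e(u_L^\varepsilon))$ and $|u_L^\varepsilon-g|^p$ pass to the limit by strong $L^p$-convergence of $e(u_L^\varepsilon)$ (a consequence of uniform convexity of $f_\mu$), and weak lower semicontinuity of $v\mapsto\int|\nabla V_\mu(e(v))|^2dx$ handles the left-hand side, producing \eqref{e:CaccioppoliVmusub} for $\bar u^\varepsilon$. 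Finally, as $\varepsilon\to0$, $u^\varepsilon\to u$ strongly in $W^{1,p}$ and in particular the traces on $\partial B_{2r}(x_0)$ converge, so by continuity of the minimization with respect to the boundary datum and the local minimality of $u$ one gets $\bar u^\varepsilon\to u$, delivering \eqref{e:CaccioppoliVmusub}.

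\textbf{The estimate \eqref{e:eureg} and the main obstacle.} The inequality \eqref{e:eureg} is almost immediate: since $|DV_\mu(\xi)|^2\ge c(\mu+|\xi|^2)^{(p-2)/2}$, the chain rule yields pointwise $|\nabla(e(u))|\le c(\mu+|e(u)|^2)^{(2-p)/4}|\nabla(V_\mu(e(u)))|$; raising to the $p$-th power, integrating over $B_r(x_0)$ and applying H\"older with conjugate exponents $2/p$ and $2/(2-p)$ concludes. The main obstacle, typical of the sub-quadratic setting, is the rigorous justification of the $L\to\infty$ passage: one must ensure strong convergence of $e(u_L^\varepsilon)$ (enough to pass $V_\mu(e(u_L^\varepsilon))$ and $|u_L^\varepsilon-g|^{p-2}(u_L^\varepsilon-g)$ to the limit) and the vanishing of the $\frac{1}{L}$-terms, both of which rely on the energy comparison above together with the strict convexity/uniqueness coming from the lower bound in (Conv).
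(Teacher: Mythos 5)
Your overall strategy is the same as the paper's: regularize the functional by adding the penalization $\frac1{2L}\int|\nabla^2v|^2dx$, apply Lemma~\ref{l:regVmusub} to the (sufficiently smooth) minimizer $u_L$ of $F_L$ with regularized boundary data, and pass to the double limit $L\to\infty$, then boundary-data-to-$u$. Your use of mollification rather than an abstract smooth sequence $u_l$ is cosmetic, and the treatment of \eqref{e:eureg} via Lemma~\ref{l:tecnico2} and H\"older is correct.

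There is, however, a genuine gap in the passage $L\to\infty$: you assert that the two $\frac1L$-weighted terms on the right-hand side of \eqref{e:st7} ``vanish in the limit thanks to the uniform bound $\frac1L\|\nabla^2 u_L^\varepsilon\|_{L^2}^2\le C_\varepsilon$.'' But boundedness of $\frac1L\int|\nabla^2 u_L^\varepsilon|^2$ does not make $\frac{c}{Lr^2}\int_{B_{2r}\setminus B_r}|\nabla^2 u_L^\varepsilon|^2$ vanish; it only keeps it bounded by $\frac{c}{r^2}C_\varepsilon$. That residual term survives into the Caccioppoli inequality for $\bar u^\varepsilon$, and since $C_\varepsilon\sim\|\nabla^2 u^\varepsilon\|_{L^2(B_{2r})}^2$ in general blows up as $\varepsilon\to0$ when $u\notin W^{2,2}$, the final $\varepsilon\to0$ step does not deliver \eqref{e:CaccioppoliVmusub}. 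What is needed, and what the paper does, is to identify $\lim_{L\to\infty}F_L(u_L^\varepsilon)=\mathscr{F}_\infty(\bar u^\varepsilon)$ by comparing with a smooth function sharing the boundary data of $\bar u^\varepsilon$ (not of $u^\varepsilon$), then observe that the minimizing sequence $u_L^\varepsilon$ converges strongly in $W^{1,p}$ to $\bar u^\varepsilon$ by the (Orlicz-type) strict convexity encoded in \eqref{e:bdhessf}, whence $\mathscr{F}_\infty(u_L^\varepsilon)\to\mathscr{F}_\infty(\bar u^\varepsilon)$ and therefore $\frac1{2L}\int|\nabla^2u_L^\varepsilon|^2=F_L(u_L^\varepsilon)-\mathscr{F}_\infty(u_L^\varepsilon)\to0$. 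Only then do both $\frac1L$-terms in \eqref{e:st7} truly disappear (the $\frac1L\int|\nabla u_L^\varepsilon|^2$ term is then handled by Poincar\'e on $W^{2,2}_0$). Your remaining steps — the identification of $\bar u^\varepsilon$ as the minimizer of $\mathscr{F}_\infty$ and the final $\varepsilon\to0$ limit via continuity in the boundary datum plus local minimality of $u$ — are in line with the paper's argument.
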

\begin{proof}
By a simple translation argument we can assume $x_0=0$ without loss of generality. 
We consider the functionals $F_L$ defined in \eqref{e:Flk}
and correspondingly we define 
\[
{\mathscr{F}_\infty}(v):=\int_{B_{2r}} f_\mu(e(v)) dx+\cost\int_{B_{2r}}|v-g|^pdx\,.
\]
Fix a sequence $u_l\in C^\infty(\overline B_{2r};\R^n)$ which converges strongly in $W^{1,p}$ to $u$, 
and let $u_{l,L}$ be the minimizer of $F_L$ over the set of $W^{1,p}(B_{2r};\R^n)$ functions
which coincide with $u_l$ on the boundary, correspondingly  $u_l^*$ for 
$\mathscr{F}_\infty$.

For a fixed $l$, let $v$ be a smooth approximation to $u_l^*$ with the same boundary data. 
Then $F_L(v)\to \mathscr{F}_\infty(v)$ as $L\uparrow\infty$. Since $F_L\ge \mathscr{F}_\infty$, 
this implies that $F_L(u_{l,L})\to \mathscr{F}_\infty(u_l^*)$ as $L\uparrow\infty$.
In particular, the sequence $u_{l,L}$ is a minimizing sequence for $\mathscr{F}_\infty$, 
and since this functional is strictly convex it converges strongly in $W^{1,p}$ to the unique 
minimizer $u_l^*$ of $\mathscr{F}_\infty$. Further, $L^{-1}\int_{B_{2r}} |\nabla^2u_{l,L}|^2dx\to0$.

Using Lemma \ref{l:regVmusub} with $w=u_l$ and taking the limit $L\uparrow\infty$ in \eqref{e:st7} we obtain
\begin{multline*}
 \int_{B_r} |\nabla(V_\mu(e(u_{l}^*)))|^2dx  
 \le c\,\frac{1+\cost }{r^2}
 \int_{{B_{2r}}}   |V_\mu(e(u_{l}^*))-(V_\mu(e(u_l^*)))_{{B_{2r}}}|^2dx
\\
+\cost\,{r^{\frac{\lambda p}{p-1}}}\int_{B_{2r}}|u_{l}^*-g|^pdx
+c\,\frac{\cost}{{r^{\frac{2\lambda p}{2-p}}}}\Big(\int_{B_{2r}}|V_\mu(e(u_{l}^*))|^2dx
+\mu^{\sfrac p2}r^n\Big).
\end{multline*}
Finally, since $u_l\to u$ strongly the sequence $u_l^*+u-u_l$ is also a minimizing sequence for 
$\mathscr{F}_\infty$, and by strict convexity it converges strongly to the unique minimizer $u$. 

We deduce that
$u_l^*\to u$ strongly in $W^{1,p}(B_{2r};\R^n)$ and in the limit as 
$l\uparrow\infty$ we conclude the proof of \eqref{e:CaccioppoliVmusub}. 
Eventually, \eqref{e:eureg} follows by H\"older's inequality and Lemma~\ref{l:tecnico2}.
\end{proof}

\subsection{Decay Estimates}\label{s:decay}

As a first corollary of Propositions~\ref{p:regVmusuper} and \ref{p:regVmusub} we 
establish a decay property of the $L^2$-norm of $V_\mu(e(u))$ needed to prove the 
density lower bound inequality in \cite{ContiFocardiIurlano17}
in the two dimensional setting. The result shall be improved as a consequence of 
the higher integrability property in the next section (cf. Corollary~\ref{c:decayVmu bis}). 
\begin{proposition}\label{p:decayVmu}
Let $n\geq 2$, $p\in(1,\infty)$, $\cost$ and $\mu\geq 0$.
Let $u\in W^{1,p}(\Omega;\R^n)$ be a local minimizer of $\FF$ defined in \eqref{e:FF}
with $g\in L^{p}(\Omega;\R^n)$ if $p\in (1,2]$ and  $g\in W^{1,p}(\Omega;\R^n)$ if 
$p\in (2,\infty)$.

Then, for all $\gamma\in(0,2)$ there is a constant {$c=c(\gamma,p,n,\cost)>0$} such that if 
$B_{R_0}(x_0)~\subset\hskip-0.125cm\subset\Omega$, then for all $\rho<R\leq R_0\leq 1$ 
if $p\geq 2$ it holds
\begin{equation}\label{edecayVmu super}
 \int_{B_{\rho}(x_0)}|V_\mu(e(u))|^2dx\leq c\rho^\gamma\Big(\frac{1}{R^\gamma} 
 \int_{B_{R}(x_0)}|V_\mu(e(u))|^2dx+c\cost \|u-g\|^p_{W^{1,p}(\Omega;\R^n)}\Big),
\end{equation}
and if $p\in(1,2)$ it holds
\begin{equation}\label{edecayVmu sub}
 \int_{B_{\rho}(x_0)}|V_\mu(e(u))|^2dx\leq 
c\rho^\gamma\Big(\frac{1}{R^\gamma} 
 \int_{B_{R}(x_0)}|V_\mu(e(u))|^2dx+c\cost\|u-g\|^p_{L^p(\Omega;\R^n)}+c\cost\mu^{\sfrac p2}\Big), 
%  c\,\left(\frac{\rho}R\right)^\gamma 
%  \int_{B_{R}(x_0)}|V_\mu(e(u))|^2dx\,.%+c\mu^{\sfrac p2}\rho^\gamma.
 \end{equation}
 \end{proposition}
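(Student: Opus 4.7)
Set $w := V_\mu(e(u))$. By Propositions~\ref{p:regVmusuper} and \ref{p:regVmusub}, $w \in W^{1,2}_{\loc}(\Omega;\R^{n\times n}_{\sym})$. My strategy is to combine the already-proved Caccioppoli inequality with Sobolev-Poincar\'e and H\"older's inequality to obtain a one-shot Campanato-type decay, without invoking any iteration procedure. The restriction $\gamma<2$ matches precisely the integrability improvement coming from the Sobolev embedding $W^{1,2}\hookrightarrow L^s$: $s$ can be as large as $2^\ast=2n/(n-2)$ when $n\geq 3$, and any finite $s$ when $n=2$.

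\textbf{Core computation.} Fix an auxiliary exponent $\alpha\in[\gamma,2)$ and let $s:=2n/(n-\alpha)>2$, so that $w\in L^s_\loc$ by Sobolev embedding. For $0<\rho\le R/2\leq 1/2$ (the case $\rho\in(R/2,R]$ being trivial by absorbing constants) decompose
\[
\int_{B_\rho(x_0)}|w|^2\,dx \le 2\int_{B_\rho(x_0)} \bigl|w-(w)_{B_{R/2}(x_0)}\bigr|^2\,dx + c(\rho/R)^n\int_{B_{R/2}(x_0)}|w|^2\,dx.
\]
By H\"older on $B_\rho$ followed by Sobolev-Poincar\'e on $B_{R/2}$,
\[
\int_{B_\rho}\bigl|w-(w)_{B_{R/2}}\bigr|^2\,dx \le c\,\rho^{n(1-2/s)}\bigl\|w-(w)_{B_{R/2}}\bigr\|^2_{L^s(B_{R/2})} \le c(\rho/R)^\alpha R^2\int_{B_{R/2}}|\nabla w|^2\,dx.
\]
Next, apply the Caccioppoli inequality on the pair $(B_R,B_{R/2})$: Proposition~\ref{p:regVmusuper} with $\lambda=1$ (hence $\tilde p(\lambda)=p$) when $p\geq 2$, and Proposition~\ref{p:regVmusub} with $\lambda=0$ when $p\in(1,2)$. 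Using $R\leq R_0\leq 1$ to absorb the positive powers of $R$ in the errors, this yields
\[
\int_{B_{R/2}}|\nabla w|^2\,dx \le \frac{c(1+\cost)}{R^2}\int_{B_R}|w|^2\,dx + c\,\cost\,E,
\]
with $E=\|u-g\|^p_{W^{1,p}(\Omega;\R^n)}$ in the super-quadratic case and $E=\|u-g\|^p_{L^p(\Omega;\R^n)}+\mu^{p/2}$ in the sub-quadratic case. Substituting and using the elementary bounds $(\rho/R)^n\le(\rho/R)^\alpha\le(\rho/R)^\gamma$ (valid since $\rho\le R$ and $\gamma\leq\alpha\leq n$) together with $\rho^\alpha R^{2-\alpha}\le \rho^\gamma$ (since $\rho,R\leq 1$ and $\gamma\leq\alpha\leq 2$) one arrives at exactly \eqref{edecayVmu super} and \eqref{edecayVmu sub}.

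\textbf{Main obstacle.} The subtle point is the sub-quadratic case, in which the Caccioppoli error in \eqref{e:CaccioppoliVmusub} carries the term $c\cost\, r^{-2\lambda p/(2-p)}\int_{B_{2r}}|w|^2\,dx$ that either has a negative power of $r$ (if $\lambda>0$) or couples directly with the quantity to be estimated (if $\lambda=0$). Taking $\lambda=0$ and exploiting $R\leq 1$, the coupling $c\cost\int_{B_R}|w|^2\leq c\cost R^{-2}\int_{B_R}|w|^2$ is absorbed into the leading Sobolev-Poincar\'e-Caccioppoli term $c(1+\cost)R^{-2}\int_{B_R}|w|^2$ with no loss, closing the argument.
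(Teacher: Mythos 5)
Your proof is correct, and it takes a genuinely different (and arguably leaner) route from the one in the paper. The paper works with the truncated function $\zeta^2 V_\mu(e(u))\in W^{1,2}_0(B_{2r})$, applies the Sobolev inequality to it and the Caccioppoli estimate to arrive at a decay of the form
\[
\int_{B_{4\theta r}(x_0)}|V_\mu(e(u))|^2\,dx\le \theta^{\frac{2+\gamma}{2}}\int_{B_{4r}(x_0)}|V_\mu(e(u))|^2\,dx+c\cost\, r^\gamma E,
\]
and then invokes the iteration Lemma~7.3 of Giusti's book to upgrade this fixed-ratio estimate to the full Morrey-type decay for arbitrary $\rho<R$. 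You avoid both the cutoff truncation and the iteration lemma: you split $\int_{B_\rho}|w|^2$ into the mean-free part plus the mean on $B_{R/2}$, estimate the former by H\"older on $B_\rho$ followed by Sobolev--Poincar\'e on $B_{R/2}$ (the $R$-powers cancel cleanly: with $s=2n/(n-\alpha)$ one gets the factor $(\rho/R)^\alpha R^2$), feed in the Caccioppoli inequality on $(B_{R/2},B_R)$, and close with the elementary exponent bookkeeping $\rho^\alpha R^{2-\alpha}\le\rho^\gamma$. The only step requiring a modicum of care is, as you correctly flag, the choice $\lambda=0$ in the sub-quadratic Caccioppoli so that the extra $\cost\int_{B_R}|w|^2$ term carries no negative power of $R$ and is absorbed into the leading term using $R\le 1$. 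The payoff of your version is that it produces the conclusion in a single pass and makes the origin of the restriction $\gamma<2$ completely transparent; the paper's version is the more classical ``smallness on a fixed ratio, then iterate'' pattern, which would also be the tool of choice if the Caccioppoli inequality were available only in a weaker form. Both require exactly the same Caccioppoli input. One small optional simplification: since you eventually bound $\rho^\alpha R^{2-\alpha}\le\rho^\gamma$ using $\alpha\geq\gamma$, you could as well take $\alpha=\gamma$ from the start; the auxiliary $\alpha$ plays no essential role.
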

\begin{proof}
Let $4r\leq R_0$, and $\zeta\in C^\infty_c(B_{2r}(x_0))$ be such that $0\leq\zeta\leq 1$, 
$\zeta|_{B_{r}(x_0)}\equiv 1$, $|\nabla\zeta|\leq 2/r$. Note that $\zeta^2 V_\mu(e(u))\in 
W^{1,2}_0(B_{2r}(x_0),\R^{n\times n}_\sym)$, therefore 
 \begin{multline}\label{e:est1}
  %\int_{B_r(x_0)}|\nabla V_\mu(e(u))|^2dx\leq
  \int_{B_{2r}(x_0)}|\nabla\big(\zeta^2 V_\mu(e(u))\big)|^2dx\\
  \leq 2\int_{B_{2r}(x_0)}\zeta ^4|\nabla V_\mu(e(u))|^2dx
 +8\int_{B_{2r}(x_0)}\zeta^2|\nabla\zeta|^2|V_\mu(e(u))|^2dx\\
  \leq 2\int_{B_{2r}(x_0)}|\nabla V_\mu(e(u))|^2dx
 +\frac{32}{r^2}\int_{B_{2r}(x_0){\setminus B_r(x_0)}}|V_\mu(e(u))|^2dx.
 \end{multline}
If $p\geq 2$ by means of Proposition~\ref{p:regVmusuper} with $\lambda=1$ we further 
estimate as follows
 \begin{multline}\label{e:est1 super}
  \int_{B_{2r}(x_0)}|\nabla\big(\zeta^2 V_\mu(e(u))\big)|^2dx
 \leq\frac{c{(1+\cost)}}{r^2}\int_{B_{4r}(x_0)}|V_\mu(e(u))-(V_\mu(e(u)))_{{B_{4r}(x_0)}}|^2dx\\
 +c\cost r^{{\frac2{p-1}}}\int_{B_{4r}(x_0)}\big(|u-g|^{\tilde p(1)}+|\nabla(u-g)|^p\big)dx
 +\frac{c}{r^2}\int_{B_{2r}(x_0)\setminus B_r(x_0)}|V_\mu(e(u))|^2dx\\
 \leq\frac{c{(1+\cost)}}{r^2}\int_{B_{4r}(x_0)}|V_\mu(e(u))|^2dx
 +c\cost r^{\frac2{p-1}}\|u-g\|_{W^{1,p}(\Omega;\R^n)}^p,
\end{multline}
with $c=c(p,n)>0$. Therefore, in view of Poincar\'e inequality and \eqref{e:est1 super} we get 
for any $\tau\in(0,1)$ and any $q\in (2,2^*)$, with $2^*=2n/(n-2)$ if $n>2$, 
$2^*=\infty$ if $n=2$,
\begin{multline}\label{e:poincare}
 \int_{B_{\tau r(x_0)}}|V_\mu(e(u))|^2dx\leq c\,(\tau\,r)^{n(1-\sfrac 2q)}
 \left(\int_{B_{2r}(x_0)}|\zeta^2V_\mu(e(u))|^qdx\right)^{\sfrac 2q}\\
 \leq c\,%(\tau\,r)^{2(1-\sfrac 2q)}r^{2+\frac{2n}q-n} 
 \tau^{n(1-\sfrac 2q)}r^2\int_{B_{2r}(x_0)}|\nabla\big(\zeta^2 V_\mu(e(u))\big)|^2dx\\
 \leq c\,{(1+\cost)} \tau^{n(1-\sfrac 2q)}\Big(\int_{B_{4r}(x_0)}|V_\mu(e(u))|^2dx
 +c \cost r^{\frac{2p}{p-1}}\|u-g\|_{W^{1,p}(\Omega;\R^n)}^p\Big),
\end{multline}
with $c=c(p,q,n)>0$. 
We choose $q\in (2,2^*)$ such that $n(1-2/q)>
{\frac{2+\gamma}2}$, which 
is the same as $q\in(\frac{4n}{2n-2-\gamma},2^*)$. 
This is possible since $\gamma\in(0,2)$. Then, for sufficiently 
small $\tau$, and for $\theta=\sfrac\tau 4$ 
\begin{equation}\label{e:est4}
 \int_{B_{4\theta r(x_0)}}|V_\mu(e(u))|^2dx
 \leq \theta^{\frac{2+\gamma}2} \int_{B_{4r}(x_0)}|V_\mu(e(u))|^2dx
 +c\cost r^{\gamma}\|u-g\|_{W^{1,p}(\Omega;\R^n)}^p\,\,.
\end{equation}
The decay formula \eqref{edecayVmu super} then follows from \cite[Lemma~7.3]{Giusti}.

Instead, if $p\in(1,2)$ by Proposition~\ref{p:regVmusub} 
{choosing $\lambda=\frac2p-1>0$}
we estimate \eqref{e:est1} as follows  
 \begin{multline*}%\label{e:est1 sub}
  \int_{B_{2r}(x_0)}|\nabla\big(\zeta^2 V_\mu(e(u))\big)|^2dx
 \leq\frac{c{(\cost+1)}}{r^2}\int_{B_{4r}(x_0)}|V_\mu(e(u))-(V_\mu(e(u)))_{{B_{4r}(x_0)}}|^2dx\\
 +c\cost {r^{\frac{2-p}{p-1}}}
 \int_{B_{4r}(x_0)}\big |u-g|^pdx
 +c\cost \mu^{\sfrac p2}r^{{n-2}}
 +c{\frac{\cost+1}{r^2}}
 \int_{B_{4r}(x_0)}|V_\mu(e(u))|^2dx\\
 \leq c{\frac{\cost+1}{r^2}}
 \int_{B_{4r}(x_0)}|V_\mu(e(u))|^2dx
 +c\,\cost\,{r^{\frac{2-p}{p-1}}}\|u-g\|_{L^{p}(\Omega;\R^n)}^p+c\,\cost \mu^{\sfrac p2}r^{{n-2}},
\end{multline*}
with $c=c(n,p)>0$. Then, arguing as to deduce \eqref{e:poincare} we conclude that 
\begin{multline*}
 \int_{B_{\tau r(x_0)}}|V_\mu(e(u))|^2dx \leq  c\,\tau^{n(1-\sfrac 2q)}\cdot\\
 \cdot\Big(\big({\cost+1}\big)\int_{B_{4r}(x_0)}|V_\mu(e(u))|^2dx
 +c\,\cost\,r^{{\frac{p}{p-1}}} \|u-g\|_{L^{p}(\Omega;\R^n)}^p+c\cost \mu^{\sfrac p2}r^{{n}}\Big),
\end{multline*}
with $c=c(p,q,n)>0$. 
By choosing $q\in (2,2^*)$ such that $n(1-2/q)>{\frac{2+\gamma}2}$, for sufficiently small $\tau$, and 
for $\theta=\sfrac \tau 4$
\begin{equation}\label{e:est4 sub}
 \int_{B_{4\theta r(x_0)}}|V_\mu(e(u))|^2dx
 \leq\theta^{\frac{2+\gamma}2} \int_{B_{4r}(x_0)}|V_\mu(e(u))|^2dx
 +c\cost r^{\gamma}\big(\|u-g\|_{L^p(\Omega;\R^n)}^p+\mu^{\sfrac p2}\big)\,.
\end{equation}
The decay formula \eqref{edecayVmu super} then follows from \cite[Lemma~7.3]{Giusti}.
\end{proof}

\section{Partial regularity results}\label{ss:enhanced}

In the quadratic case $p=2$ it is well-known that the minimizer $u$ is $C^2(\Omega;\R^n)$
in any dimension {if $g\in C^1$} (see for instance \cite[Theorem~10.14]{Giusti} or 
\cite[Theorem~5.13, Corollary~5.14]{GiaquintaMartinazzi2012}).

Below we establish $C^{1,\alpha}$ regularity in the two dimensional setting and partial regularity 
in $n$ dimensions together with an estimate on the Hausdorff dimension of the corresponding singular 
set. 
To our knowledge it is a major open problem in elliptic regularity to prove or disprove everywhere 
regularity for elasticity type systems in the nonlinear case if $n\geq 3$ and $p\neq 2$.

\subsection{Higher integrability}\label{ss:HI}

In this subsection we prove the first main ingredient for establishing both 
$C^{1,\alpha}$ regularity if $n=2$ and partial regularity if 
$n\geq 3$ with an estimate of the Hausdorff dimension 
of the singular set:
%({see Mingione et al. nel caso gradiente}).
the higher integrability for the gradient of $V_\mu(e(u))$, $\mu\geq 0$. 
\begin{proposition}\label{p:HI}
Let $n\geq 2$, $p\in(1,\infty)$, $\cost$ and $\mu\geq 0$.
Let $u\in W^{1,p}(\Omega;\R^n)$ be a local minimizer of $\FF$ defined in \eqref{e:FF}
with $g\in L^{s}(\Omega;\R^n)$, $s>p$, if $p\in (1,2]$ and 
$g\in W^{1,p}(\Omega;\R^n)$ if $p\in (2,\infty)$.

Then, $V_\mu(e(u))\in W^{1,q}_\loc(\Omega;\R^{n\times n}_\sym)$ for some $q>2$.
More precisely, there exist $q=q(n,p,\cost)>2$ and $c=c(n,p,\cost)>0$ such that 
if $B_{2r}(x_0)\subset\hskip-0,125cm\subset\Omega$ and $p> 2$ 
\begin{multline}\label{e:HI super}
\Big(\fint_{B_{r}(x_0)}|\nabla \big(V_\mu(e(u))\big)|^qdx\Big)^{\sfrac 1q}
\leq c\Big(\fint_{B_{2r}(x_0)}|\nabla \big(V_\mu(e(u))\big)|^2dx\Big)^{\sfrac12}\\
+c\,\Big(\cost\fint_{B_{2r}(x_0)}\big(|u-g|^{\tilde{p}(\frac{1+\lambda_0}2)}
+|\nabla(u-g)|^{\frac{1+\lambda_0}2 p}\big)^{\frac{q}2}dx\Big)^{\sfrac1q},
\end{multline}
with the exponent {$\tilde p$ and} $\lambda_0\in[\frac 1{p-1},1)$ introduced 
in Section~\ref{ss:caccioppoli}, and if $p\in(1,2]$
\begin{multline}\label{e:HI sub}
\Big(\fint_{B_{r}(x_0)}|\nabla \big(V_\mu(e(u))\big)|^qdx\Big)^{\sfrac 1q}
\leq c\Big(\fint_{B_{2r}(x_0)}|\nabla \big(V_\mu(e(u))\big)|^2dx\Big)^{\sfrac 12}\\
+c\,\Big(\cost\fint_{B_{2r}(x_0)}\big(|u-g|^p+|V_\mu(e(u))|^2+\mu^{\sfrac p2}\big)^{\frac{q}2}dx\Big)^{\sfrac1q}
\end{multline}
{with  $q=q(n,p,\cost,s)>2$ and $c=c(n,p,\cost,s)>0$.}
\end{proposition}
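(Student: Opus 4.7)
The plan is to follow the classical Gehring--Giaquinta--Modica scheme: starting from the Caccioppoli-type estimates \eqref{e:CaccioppoliVmusuper} and \eqref{e:CaccioppoliVmusub}, I would combine them with a Sobolev--Poincar\'e inequality to produce a reverse H\"older inequality with increasing support, then invoke Gehring's lemma to upgrade the integrability of $\nabla V_\mu(e(u))$ from $L^2$ to $L^q$ for some $q>2$.

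More concretely, since $V_\mu(e(u))\in W^{1,2}_\loc(\Omega;\R^{n\times n}_\sym)$ by Propositions~\ref{p:regVmusuper} and \ref{p:regVmusub}, the Sobolev--Poincar\'e inequality applied on $B_{2r}(x_0)$ with exponent $m:=\tfrac{2n}{n+2}<2$ yields
\begin{equation*}
\fint_{B_{2r}(x_0)}|V_\mu(e(u))-(V_\mu(e(u)))_{B_{2r}(x_0)}|^2\,dx
\leq c\,r^2\Big(\fint_{B_{2r}(x_0)}|\nabla V_\mu(e(u))|^{m}\,dx\Big)^{2/m}.
\end{equation*}
Inserting this into \eqref{e:CaccioppoliVmusuper} or \eqref{e:CaccioppoliVmusub} produces a reverse H\"older inequality of the form
\begin{equation*}
\fint_{B_{r}(x_0)}|\nabla V_\mu(e(u))|^2\,dx
\leq c\Big(\fint_{B_{2r}(x_0)}|\nabla V_\mu(e(u))|^{m}\,dx\Big)^{2/m}
+c\fint_{B_{2r}(x_0)}H\,dx,
\end{equation*}
where $H$ collects the forcing terms from the respective Caccioppoli estimate.

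Next I would check that $H\in L^{t}_\loc(\Omega)$ for some $t>1$, which is the input required to apply Gehring's lemma (see e.g.\ \cite{Giusti}). In the super-quadratic case, choosing $\lambda=(1+\lambda_0)/2$ strictly between the critical value $\lambda_0$ and $1$ yields $\tilde p(\lambda)<p^\ast$, and so by Sobolev embedding $|u-g|^{\tilde p(\lambda)}\in L^{p^\ast/\tilde p(\lambda)}_\loc$ and $|\nabla(u-g)|^{\lambda p}\in L^{1/\lambda}_\loc$, both with exponents strictly greater than $1$. In the sub-quadratic case the forcing reduces to $|u-g|^p+|V_\mu(e(u))|^2+\mu^{p/2}$: from $g\in L^s(\Omega;\R^n)$ with $s>p$ and $u\in L^{p^\ast}_\loc$, the term $|u-g|^p$ lies in $L^{\min(s,p^\ast)/p}_\loc$ with exponent strictly greater than $1$, while $|V_\mu(e(u))|^2\in L^{2^\ast/2}_\loc$ by the embedding $V_\mu(e(u))\in W^{1,2}_\loc\hookrightarrow L^{2^\ast}_\loc$ (any finite exponent if $n=2$). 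Gehring's lemma then produces $q>2$ and the bounds \eqref{e:HI super}, \eqref{e:HI sub}.

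The principal obstacle is the careful tracking of exponents: in the super-quadratic case one has to exploit the strict inequality $\lambda_0<1$ (this is the reason for the choice $\lambda=(1+\lambda_0)/2$) in order to place $\tilde p(\lambda)$ strictly below the Sobolev threshold and so gain integrability of the forcing above $1$; in the sub-quadratic case one has instead to cope with the self-referential term $|V_\mu(e(u))|^2$ on the right-hand side of \eqref{e:CaccioppoliVmusub}, whose extra integrability is recovered precisely through the Sobolev embedding $W^{1,2}\hookrightarrow L^{2^\ast}$ associated with the already established regularity $V_\mu(e(u))\in W^{1,2}_\loc$.
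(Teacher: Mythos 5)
Your proposal follows the same route as the paper: Caccioppoli estimates \eqref{e:CaccioppoliVmusuper} (with $\lambda=(1+\lambda_0)/2$) and \eqref{e:CaccioppoliVmusub} (with $\lambda=0$), Sobolev--Poincar\'e with exponent $\tfrac{2n}{n+2}$ to obtain a reverse H\"older inequality, the higher integrability of the forcing term via $\tilde p(\lambda)<p^\ast$, $g\in L^s$ with $s>p$, and $V_\mu(e(u))\in W^{1,2}_\loc\hookrightarrow L^{2^\ast}_\loc$, followed by Gehring's lemma with increasing support and a covering argument. This matches the paper's proof in all essentials; your exposition is in fact somewhat more explicit about why the forcing lies in $L^t_\loc$ for some $t>1$.
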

\begin{proof} 
Recalling that $2$ is the Sobolev exponent of $\frac{2n}{n+2}$, we may use the Caccioppoli's 
type estimates \eqref{e:CaccioppoliVmusuper} and \eqref{e:CaccioppoliVmusub}, the former if 
$p> 2$ (with $\lambda=\frac{1+\lambda_0}2$) and the latter if $p\in(1,2]$ {(with $\lambda=0$)}, to deduce by 
Poincar\'e-Wirtinger inequality for some $c=c(n,p)>0$ 
\begin{multline*}
\fint_{B_{r}(x_0)}|\nabla \big(V_\mu(e(u))\big)|^2dx
\leq c(1+\cost)\Big(\fint_{B_{2r}(x_0)}
|\nabla \big(V_\mu(e(u))\big)|^{\frac{2n}{n+2}}dx\Big)^{\frac{n+2}{n}}\\
+c\,\cost r^{\frac2{p-1}}\fint_{B_{2r}(x_0)}
\big(|u-g|^{\tilde p(\frac{1+\lambda_0}2)}+|\nabla(u-g)|^{\frac{1+\lambda_0}2 p}\big)dx,
\end{multline*}
if $p> 2$, and
\begin{multline*}
\fint_{B_{r}(x_0)}|\nabla \big(V_\mu(e(u))\big)|^2dx
\leq c(1+\cost)\Big(\fint_{{B_{2r}(x_0)}}
|\nabla \big(V_\mu(e(u))\big)|^{\frac{2n}{n+2}}dx\Big)^{\frac{n+2}{n}}\\
+c\,\cost\fint_{B_{2r}(x_0)}\big(|u-g|^p+|V_\mu(e(u))|^2+\mu^{\sfrac p2}\big)dx
\end{multline*}
if $p\in(1,2]$. By taking into account that $u\in W^{1,p}(\Omega;\R^n)$, {$\lambda_0\in(\frac1{p-1},1)$}
and $\tilde p(\frac{1+\lambda_0}2)<p^\ast$ and that
$V_\mu(e(u))\in W^{1,2}_\loc(\Omega;\R^{n\times n}_\sym)$ (cf.~Propositions~\ref{p:regVmusuper} 
and \ref{p:regVmusub}), Gehring's lemma with increasing support (see for instance 
\cite[Theorem~6.6]{Giusti}) yields higher integrability together with estimates 
\eqref{e:HI super} and \eqref{e:HI sub}. A covering argument provides the conclusion.
\end{proof}
\begin{remark}
 To apply Gehring's lemma with increasing support in order to deduce higher integrability in 
 case $p>2$ it is instrumental that we may choose $\lambda\in({\lambda_0},1)$ and 
 the corresponding exponent $\tilde p(\lambda)\in(p,{p^\ast})$ 
 in \eqref{e:CaccioppoliVmusuper}
(cf. the definition of $\tilde p(\cdot)$ in \eqref{e:ptilde}).
 \end{remark}
We improve next the decay estimates in 
Proposition \ref{p:decayVmu}. This version is useful to prove 
the density lower bound in \cite{ContiFocardiIurlano17} in the 
three dimensional setting. We do not provide the details since
the proof is the same of Proposition \ref{p:decayVmu} and only 
takes further advantage of Proposition \ref{p:HI}.
\begin{corollary}\label{c:decayVmu bis}
Let $n\geq 3$, $p\in(1,\infty)$, $\cost$ and $\mu\geq 0$.
Let $u\in W^{1,p}(\Omega;\R^n)$ be a local minimizer of $\FF$ defined in \eqref{e:FF}
with $g\in L^s(\Omega;\R^n)$ with $s>p$ if $p\in (1,2]$ and $g\in W^{1,p}(\Omega;\R^n)$ 
if $p\in (2,\infty)$.

Then, there exists $\gamma_0=\gamma_0(n,p,\cost)$, with $\gamma_0>2$, 
such that for all $\gamma\in(0,\gamma_0]$ there is a constant $c=c(\gamma,p,n)>0$ 
such that if $B_{R_0}(x_0)~\subset\hskip-0.125cm\subset\Omega$, then for all $\rho<R\leq R_0\leq 1$ 
\begin{equation*}%\label{edecayVmu super}
 \int_{B_{\rho}(x_0)}|V_\mu(e(u))|^2dx\leq c\rho^\gamma\Big(\frac{1}{R^\gamma} 
 \int_{B_{R}(x_0)}|V_\mu(e(u))|^2dx+c\cost\|u-g\|^p_{W^{1,p}(\Omega;\R^n)}\Big)
\end{equation*}
{if $p>2$, and if $p\in(1,2]$,}
\begin{equation*}%\label{edecayVmu sub}
 \int_{B_{\rho}(x_0)}|V_\mu(e(u))|^2dx\leq 
c\rho^\gamma\Big(\frac{1}{R^\gamma} 
 \int_{B_{R}(x_0)}|V_\mu(e(u))|^2dx+c\cost\|u-g\|^p_{L^p(\Omega;\R^n)}+c\cost\mu^{\sfrac p2}\Big)
%  c\,\left(\frac{\rho}R\right)^\gamma 
%  \int_{B_{R}(x_0)}|V_\mu(e(u))|^2dx\,.%+c\mu^{\sfrac p2}\rho^\gamma.
 \end{equation*}
 {with  $c=c(\gamma,p,n,s)>0$.}
\end{corollary}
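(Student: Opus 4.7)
The plan is to run the proof of Proposition \ref{p:decayVmu} step by step, with a single but crucial modification: replace the standard Sobolev--Poincar\'e embedding $W^{1,2}_0\hookrightarrow L^{2^*}$ used there by the stronger embedding $W^{1,q_1}_0\hookrightarrow L^{q_1^*}$, where $q_1>2$ is the higher integrability exponent furnished by Proposition \ref{p:HI}. Since $q_1^*=nq_1/(n-q_1)>2^*$ (or may be taken arbitrarily large when $q_1\ge n$), the range of admissible intermediate exponents in the subsequent H\"older step enlarges, and the resulting decay power can be pushed strictly above the barrier $2$ that limits Proposition \ref{p:decayVmu} to $\gamma<2$.

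More concretely, with $\zeta\in C^\infty_c(B_{2r}(x_0))$ a cutoff satisfying $\zeta\equiv 1$ on $B_r(x_0)$ and $|\nabla\zeta|\le 2/r$, H\"older's inequality gives, for every $q>2$ and $\tau\in(0,1)$,
$$\int_{B_{\tau r}(x_0)}|V_\mu(e(u))|^2\,dx \le c(\tau r)^{n(1-2/q)}\Big(\int_{B_{2r}(x_0)}|\zeta^2 V_\mu(e(u))|^q\,dx\Big)^{2/q}.$$
Since $\zeta^2 V_\mu(e(u))\in W^{1,q_1}_0(B_{2r}(x_0))$ by Proposition \ref{p:HI}, Sobolev--Poincar\'e with exponent $q_1$ controls $\|\zeta^2 V_\mu(e(u))\|_{L^q(B_{2r})}$, for any $q\in(2,q_1^*)$, by a multiple of $r^{n/q-n/q_1+1}\|\nabla(\zeta^2 V_\mu(e(u)))\|_{L^{q_1}(B_{2r})}$. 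I would then bound the latter norm via the pointwise inequality $|\nabla(\zeta^2 V_\mu)|\le (c/r)|V_\mu|+|\nabla V_\mu|$ combined with the reverse-H\"older estimates \eqref{e:HI super}--\eqref{e:HI sub} (to reduce the $L^{q_1}$ norm of $\nabla V_\mu(e(u))$ to an $L^2$ norm on a slightly larger ball) and the Caccioppoli inequalities \eqref{e:CaccioppoliVmusuper}--\eqref{e:CaccioppoliVmusub} (to reduce the latter in turn to $\|V_\mu(e(u))\|_{L^2(B_{c_0 r}(x_0))}$ for a fixed $c_0$). A direct check confirms that the various powers of $r$ cancel, giving an inequality of the form
$$\int_{B_{\tau r}(x_0)}|V_\mu(e(u))|^2\,dx \le c\,\tau^{n(1-2/q)}\int_{B_{c_0 r}(x_0)}|V_\mu(e(u))|^2\,dx+\text{l.o.t.},$$
where the lower-order terms depend on $u-g$ and $\mu$ exactly as in Proposition \ref{p:decayVmu}.

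The key elementary identity is
$$n\big(1-2/q_1^*\big)=n+2-\tfrac{2n}{q_1},$$
which is strictly greater than $2$ as soon as $q_1>2$. Defining $\gamma_0$ to be any value less than $n+2-2n/q_1$ (in particular $\gamma_0>2$), for each $\gamma\in(0,\gamma_0]$ one can pick $q\in(2,q_1^*)$ with $n(1-2/q)>\gamma$, and the standard iteration Lemma 7.3 of Giusti applied to $\phi(\rho):=\int_{B_\rho(x_0)}|V_\mu(e(u))|^2\,dx$ yields the desired Morrey-type decay. The main technical subtlety is the careful bookkeeping of radii and constants when chaining Sobolev--Poincar\'e, the reverse-H\"older estimate and the Caccioppoli inequality on nested balls so that the powers of $r$ in the inhomogeneous source terms scale correctly and can be absorbed in the Giusti iteration; this is however parallel in every respect to the bookkeeping in Proposition \ref{p:decayVmu}, which is precisely why the authors feel licensed to omit the details.
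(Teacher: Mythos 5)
Your proposal is correct and is essentially the argument the authors have in mind: the paper explicitly states that the proof of this corollary ``is the same of Proposition~\ref{p:decayVmu} and only takes further advantage of Proposition~\ref{p:HI},'' which is exactly what you do by replacing the embedding $W^{1,2}_0\hookrightarrow L^{2^*}$ at step \eqref{e:poincare} with $W^{1,q_1}_0\hookrightarrow L^{q_1^*}$, chaining the reverse H\"older estimate from Proposition~\ref{p:HI} and the Caccioppoli inequalities to close the loop, and observing $n(1-\sfrac{2}{q_1^*})=n+2-\sfrac{2n}{q_1}>2$. One small point worth recording in the bookkeeping: when estimating $\|\nabla(\zeta^2 V_\mu(e(u)))\|_{L^{q_1}}$ the term $(c/r)\|V_\mu(e(u))\|_{L^{q_1}(B_{2r})}$ is not directly comparable to the $L^2$ norm; it is cleanest to apply the cutoff to $V_\mu(e(u))-(V_\mu(e(u)))_{B_{2r}}$ (as the Caccioppoli inequalities already suggest), control that difference via Poincar\'e--Wirtinger by $c\,r\|\nabla V_\mu(e(u))\|_{L^{q_1}}$, and then re-add the average using $|(V_\mu(e(u)))_{B_{2r}}|\le(\fint_{B_{2r}}|V_\mu(e(u))|^2)^{1/2}$, which contributes an even better $\tau^n$ decay.
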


\subsection{The $2$-dimensional case}\label{ss:2d}

$C^{1,\alpha}$ regularity in $2$d readily follows from Proposition~\ref{p:HI} 
(see also Remark~\ref{r:furthereg}).
\begin{proposition}\label{p:Calfa}
Let $n=2$, $p\in(1,\infty)$, $\cost$ and $\mu\geq 0$. 
Let $u\in W^{1,p}(\Omega;\R^2)$ be a local minimizer of $\FF$ defined in \eqref{e:FF}
with {$g\in L^{s}(\Omega;\R^2)$, $s>p$,} if $p\in (1,2]$
and $g\in W^{1,p}(\Omega;\R^2)$ if $p\in (2,\infty)$.

Then, $u\in C^{1,\alpha}_\loc(\Omega;\R^2)$ for all $\alpha\in(0,1)$ if $1<p<2$ and $\mu>0$ or if $p\geq 2$, and for 
some $\alpha(p)\in(0,1)$ if $1<p<2$ and $\mu=0$.
\end{proposition}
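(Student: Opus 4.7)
The plan is to chain the higher integrability of Proposition~\ref{p:HI} with Sobolev--Morrey embedding in dimension two, invert $V_\mu$ quantitatively to transfer regularity to $e(u)$, and then close with classical Schauder theory applied to the linearized Euler--Lagrange system. First I would invoke Proposition~\ref{p:HI} to obtain $V_\mu(e(u)) \in W^{1,q}_\loc(\Omega;\R^{2\times 2}_\sym)$ for some $q>2$; since $n=2$, Morrey's embedding yields $V_\mu(e(u)) \in C^{0,\beta}_\loc(\Omega)$ with $\beta=1-2/q$, hence in particular $V_\mu(e(u))$ is locally bounded and continuous.

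Second, I would transfer this regularity to $e(u)$. Continuity and local boundedness of $V_\mu(e(u))$ together with the continuity of the inverse map $V_\mu^{-1}$ (Lemma~\ref{l:Vmu}(i)--(ii)) show that $e(u)$ is locally bounded. Then Lemma~\ref{l:tecnico2} applied with exponent $\gamma=(p-2)/4>-1/2$ gives, for $\xi,\eta$ in a bounded subset of $\R^{2\times 2}_\sym$,
\[
 |\xi-\eta|\,(\mu+|\xi|^2+|\eta|^2)^{(p-2)/4}\le c\,|V_\mu(\xi)-V_\mu(\eta)|,
\]
so that $V_\mu^{-1}$ is locally H\"older (in fact Lipschitz on bounded sets in the non-degenerate regime), whence $e(u)\in C^{0,\beta'}_\loc$ for some $\beta'>0$ depending on $p$ and $\beta$.

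Third, I would bootstrap via the Euler--Lagrange system \eqref{e:ELpde}. Differentiating formally in direction $\epsilon_s$ (rigorously by difference quotients, using $u\in W^{2,2}_\loc$ from Proposition~\ref{p:regVmusuper} when $\mu>0$, or $u\in W^{2,p}_\loc$ from Proposition~\ref{p:regVmusub}), $\partial_s u$ solves a linear elliptic system with coefficient matrix $A(x):=\nabla^2 f_\mu(e(u)(x))$ and lower-order data controlled by $g$. In the non-degenerate regime (i.e.\ $\mu>0$, or $p\geq 2$) the matrix $A$ is uniformly elliptic by (Conv) and H\"older continuous by the previous step; classical Campanato--Schauder theory for linear elliptic systems then gives $\nabla u \in C^{0,\alpha}_\loc$ with $\alpha$ matching the H\"older exponent of $A$. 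Iterating the scheme, every improvement of the H\"older exponent of $\nabla u$ improves that of $A$ (and of the right-hand side, once we invoke regularity of $g$), and one reaches any $\alpha\in (0,1)$.

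The main obstacle is the degenerate case $p\in(1,2)$ with $\mu=0$: at points where $e(u)$ vanishes the entries of $\nabla^2 f_0(e(u))$ blow up and uniform ellipticity of the linearized system fails, so the Schauder bootstrap cannot be iterated freely. One has to settle for the a priori H\"older exponent produced at the $V_0$-inversion step, yielding $u\in C^{1,\alpha(p)}_\loc$ for some $\alpha(p)\in(0,1)$ rather than for every $\alpha<1$.
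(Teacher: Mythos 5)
Your proposal follows the same three-step route as the paper: (a) higher integrability from Proposition~\ref{p:HI} plus Morrey in $n=2$ gives $V_\mu(e(u))\in C^{0,1-2/q}_\loc$; (b) inversion of $V_\mu$ (using its structure as a radial homeomorphism, quantified by Lemma~\ref{l:tecnico2}) transfers H\"older continuity to $e(u)$, hence to $\nabla u$ via Korn; (c) a Schauder bootstrap on the differentiated Euler--Lagrange system closes the argument in the non-degenerate regime. You also identify the exceptional case $p\in(1,2)$, $\mu=0$ correctly as the one where $\nabla^2 f_0$ is not defined (blows up) at the origin, so that only the H\"older exponent from step (b) survives.

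Two places where the paper's execution of the last step differs from yours, and where yours needs a small repair. First, for $p\in(1,2)$ the paper does not work with $u\in W^{2,p}_\loc$ directly; it first upgrades to $u\in W^{2,2}_\loc$, using the local boundedness of $e(u)$ together with Lemma~\ref{l:tecnico2} (the argument behind \eqref{e:eureg}), so that the differentiated system can be treated with $L^2$-based theory. Second, and more to the point: the hypothesis (Reg) only gives $f_\mu\in C^2$, so the coefficient matrix $A(x)=\nabla^2 f_\mu(e(u)(x))$ is a priori merely \emph{continuous}, not H\"older. Your proposed iteration --- ``every improvement of the H\"older exponent of $\nabla u$ improves that of $A$'' --- silently assumes $\nabla^2 f_\mu$ is H\"older (or better), which is true for the prototype \eqref{e:fmu} with $\mu>0$ but is not part of (Reg). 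The paper avoids this by appealing to elliptic theory for systems with \emph{continuous} coefficients, which in two dimensions gives $\nabla^2 u\in L^q_\loc$ for every $q<\infty$ in a single step, hence $\nabla u\in C^{0,\alpha}_\loc$ for all $\alpha\in(0,1)$ by Morrey; no iteration is needed. If you prefer the Schauder-iteration route, you should add the hypothesis that $\nabla^2 f_\mu$ is locally H\"older, or replace the iteration by the one-shot continuous-coefficient ($L^q$) argument.
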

\begin{proof}
We recall that $V_\mu(e(u))\in W^{1,q}_\loc(\Omega;\R^{2\times 2}_\sym)$ 
for some $q>2$ in view of Proposition~\ref{p:HI}. Therefore, by Morrey's theorem 
$V_\mu(e(u))\in C^{0,1-\frac{2}{q}}_\loc(\Omega;\R^{2\times 2}_\sym)$. 

Furthermore, being $V_\mu$ an homeomorphism with inverse of class 
$C^1(\R^{2\times 2};\R^{2\times 2})$ if $p\in(1,2]$ and $\mu\geq 0$ or if $p>2$ and $\mu>0$, and of class 
$C^{0,\frac2{p}}_\loc(\R^{2\times 2};\R^{2\times 2})$ if $p>2$ and $\mu=0$, we conclude by Korn's inequality that 
$u\in C^{1,\alpha_p}_\loc(\Omega;\R^2)$ for some $\alpha_p=\alpha(p)\in(0,1)$.

To conclude the claimed $C^{1,\alpha}$ regularity for all $\alpha\in(0,1)$, 
we recall first that $u\in W^{2,p\wedge 2}_\loc(\Omega;\R^2)$ (cf. Propositions~\ref{p:regVmusuper} 
and \ref{p:regVmusub}). Actually, in the $2$-dimensional setting $u\in W^{2,2}_\loc(\Omega;\R^2)$ 
in case $p\in(1,2)$, as well. Indeed, $|e(u)|\in L^{\infty}_\loc(\Omega)$ by Corollary~\ref{c:decayVmu bis}, 
therefore we conclude at once from Lemma~\ref{l:tecnico2} (cf. the argument leading to \eqref{e:eureg}).
Hence, since $f_\mu\in C^2(\R^{2\times 2}_\sym)$ if $1<p<2$ and $\mu>0$ or if $p\geq 2$, 
one can differentiate \eqref{e:ELpde} and deduce 
that the weak gradient of $e(u)$ is a weak solution to a linear 
uniformly elliptic system with continuous coefficients. Schauder's 
theory provides the conclusion (cf. \cite[Theorem~5.6 and 5.15]{GiaquintaMartinazzi2012}).
\end{proof}

\begin{remark}\label{r:furthereg}
Actually, $u\in C^k(\Omega;\R^2)$ if ${g}\in C^k(\R^{2\times 2}_\sym)$ and $\mu>0$ bootstrapping 
the previous argument. 
\end{remark}

\subsection{Partial regularity in the non-degenerate autonomous case}\label{ss:partial autonomous}

In this section we deal with the non-degenerate autonomous case, corresponding to 
$\mu>0$ and $\cost=0$, 
by following the so called indirect methods for proving partial regularity (see \cite{Giaquinta83}). 
Therefore, the other main ingredient besides higher integrability of the gradient, is the following 
excess decay lemma.  We introduce the notation
 \begin{equation}\label{e:excess}
  \Exc_v(x,r):=\fint_{B_r(x)}\left|V_\mu\Big(e(v)-\big(e(v)\big)_{B_r(x)}\Big)\right|^2dy
  \end{equation}
for the excess of any $v\in W^{1,p}(\Omega;\R^n)$. Recall that $\big(e(v)\big)_{B_r(x)}=\fint_{B_r(x)}e(v)dy$. 

Technical tools exploited in the proof of the excess decay are postponed to the 
Appendix~\ref{a:technical}. For a linearization argument there, the assumption 
$\mu>0$ is crucial (cf. Theorem~\ref{t:Gammasup}).
\begin{proposition}\label{p:excessdecay}
Let $n\geq 2$, $p\in(1,\infty)$ and $\mu> 0$. Let $u\in W^{1,p}(\Omega;\R^n)$ be a 
local minimizer of $\Fz$ defined in \eqref{e:FF}.

Then, for every $L>0$ there exists $C=C(L)>0$ such that for every $\tau\in(0,\sfrac 14)$ 
there exists $\varepsilon=\varepsilon(\tau,L)>0$ such that if $B_r(x)\subseteq\Omega$, 
\[
%\left|\fint_{B_r(x)}e(u)dy\right|
{\left|\big(e(u)\big)_{B_r(x)}\right|}\leq L\qquad\text{and}\qquad
\Exc_u(x,r)\leq\varepsilon\,,
\]
then 
\begin{equation}\label{e:excessdecay}
\Exc_u(x,\tau\,r)\leq C\,\tau^2 \Exc_u(x,r).
\end{equation}
\end{proposition}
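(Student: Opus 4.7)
The plan is to argue by contradiction via the classical blow-up and linearization method for autonomous elliptic systems. Assume the conclusion fails for some $L>0$ and $\tau\in(0,\sfrac 14)$: there exist local minimizers $u_k$ of $\Fz$, balls $B_{r_k}(x_k)\subseteq\Omega$, and matrices $A_k:=(e(u_k))_{B_{r_k}(x_k)}$ satisfying $|A_k|\le L$ and $\lambda_k^2:=\Exc_{u_k}(x_k,r_k)\to 0$, yet $\Exc_{u_k}(x_k,\tau r_k)>C\tau^2\lambda_k^2$, with $C=C(L)$ to be fixed at the end. After translating and rescaling to $x_k=0$, $r_k=1$, I would introduce the normalized blow-ups
\begin{equation*}
v_k(y):=\frac{u_k(y)-a_k-(A_k+\omega_k)y}{\lambda_k},\qquad y\in B_1,
\end{equation*}
with $a_k\in\R^n$ and $\omega_k\in\R^{n\times n}$ skew-symmetric chosen so that $(v_k)_{B_1}=0$ and $\int_{B_1}\nabla v_k\,dy$ is symmetric; by construction $(e(v_k))_{B_1}=0$.

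First, I would establish that $v_k$ is bounded in $W^{1,2}(B_1;\R^n)$. By Lemma~\ref{l:tecnico2} with $\gamma=(p-2)/4$, the excess hypothesis rewrites as
\begin{equation*}
\int_{B_1}(\mu+|A_k|^2+\lambda_k^2|e(v_k)|^2)^{p/2-1}|e(v_k)|^2\,dy\le c(L).
\end{equation*}
Since $\mu>0$ and $|A_k|\le L$, on $\{\lambda_k|e(v_k)|\le 1\}$ this gives a direct $L^2$ bound on $e(v_k)$; the complement has measure tending to $0$ by Chebyshev combined with the higher integrability of Proposition~\ref{p:HI}. Korn's second inequality then yields a uniform $W^{1,2}$ bound on $v_k$, so, up to a subsequence, $v_k\rightharpoonup v$ weakly in $W^{1,2}_\loc(B_1;\R^n)$ and $A_k\to A$ with $|A|\le L$.

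Next I would linearize the Euler--Lagrange system. Writing $\nabla f_\mu(A_k+\lambda_k e(v_k))-\nabla f_\mu(A_k)=\lambda_k\mathbb{A}_k(y)e(v_k)$ with $\mathbb{A}_k(y):=\int_0^1\nabla^2 f_\mu(A_k+s\lambda_k e(v_k)(y))\,ds$, the minimality of $u_k$ (with $\cost=0$) yields
\begin{equation*}
\int_{B_1}\langle\mathbb{A}_k e(v_k),e(\varphi)\rangle\,dy=0\qquad\forall\varphi\in C^\infty_c(B_1;\R^n).
\end{equation*}
Since $\nabla^2 f_\mu$ is continuous on a neighbourhood of $A$ (this is precisely where $\mu>0$ enters) and $\mathbb{A}_k$ is uniformly bounded on the good set, truncating $\{\lambda_k|e(v_k)|>1\}$ allows one to pass to the limit and obtain that $v$ is a weak $W^{1,2}$ solution of the constant-coefficient linear system $\div(\nabla^2 f_\mu(A)e(v))=0$. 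Since $\nabla^2 f_\mu(A)$ is Legendre--Hadamard elliptic by (Conv), the classical Campanato estimate for linear elastic systems furnishes $C_0=C_0(L)>0$ with
\begin{equation*}
\fint_{B_\tau}|e(v)-(e(v))_{B_\tau}|^2\,dy\le C_0\tau^2\fint_{B_1}|e(v)-(e(v))_{B_1}|^2\,dy\le C_0\tau^2.
\end{equation*}

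The main obstacle is to transfer this linear decay back to the original excess at scale $\tau$: I would need strong $L^2$ convergence $e(v_k)\to e(v)$ on $B_{1/2}$, combined with the comparison
\begin{equation*}
|V_\mu(A_k+\lambda_k e(v_k))-V_\mu(A_k)|^2\sim\lambda_k^2(\mu+|A_k|^2)^{p/2-1}|e(v_k)|^2
\end{equation*}
on the good set, which follows from Lemma~\ref{l:tecnico2} and Lemma~\ref{l:Vmu}. The strong convergence uses the nondegeneracy $\mu>0$ decisively and is precisely where the $\Gamma$-convergence/compactness statement of Theorem~\ref{t:Gammasup} in the appendix enters --- a tool tailored to exactly this linearized setting, whose proof would typically test the Euler--Lagrange system against $\zeta^2(v_k-v)$ and exploit uniform convexity of $\xi\mapsto|V_\mu(\xi)|^2$ (Lemma~\ref{l:Vmu}(v)). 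Combining these facts gives
\begin{equation*}
\limsup_{k\to\infty}\lambda_k^{-2}\Exc_{u_k}(0,\tau)\le c(L)\fint_{B_\tau}|e(v)-(e(v))_{B_\tau}|^2\,dy\le c(L)C_0\tau^2,
\end{equation*}
contradicting the standing assumption once $C$ is chosen larger than $c(L)C_0(L)$.
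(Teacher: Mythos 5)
Your proposal follows essentially the same route as the paper: a contradiction argument with blow-up $v_k$ at scale $\lambda_k:=\Exc_u(x_k,r_k)^{1/2}$, weak compactness via Theorem~\ref{t:Gammasup}, identification of the limit as a solution of the constant-coefficient linearized system $\div(\nabla^2 f_\mu(\mathbb{A}_\infty)e(v))=0$ (the paper's Corollary~\ref{c:pde}), Campanato decay for that system, and strong convergence (the paper's Corollary~\ref{c:strongconv}) to transfer the decay back through the $V_\mu$-excess. The only inessential deviation is your invocation of Proposition~\ref{p:HI} to show $\calL^n(\{\lambda_k|e(v_k)|>1\})\to 0$: this already follows by Chebyshev from the normalization $\fint_{B_1}|V_\mu(\lambda_k e(v_k))|^2\,dy=\lambda_k^2\to 0$ together with Lemma~\ref{l:Vmu}(iv), so no higher integrability is needed at that step.
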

\begin{proof}
 Suppose by contradiction that there is $L>0$ such that for all constants $C>0$
 we can find $\tau\in(0,{\sfrac 14})$ for which there {exist} 
 $B_{r_h}(x_h)\subset \Omega$ such that
 \[
  %\left|\fint_{B_{r_h}(x_h)}e(u)dx\right|
  {\left|\big(e(u)\big)_{B_{r_h}(x_h)}\right|}\leq L,\quad \Exc_u(x_h,r_h)=\lambda_h^2\downarrow 0,
 \]
and
\begin{equation}\label{e:contra1}
 \Exc_u(x_h,\tau\,r_h)> C\,\tau^2 \Exc_u(x_h,r_h).
\end{equation}
We shall conveniently fix the value of $C$ at the end of the proof to reach a contradiction.

Consider the field $u_h:B_1\to\R^n$ defined by
\[
 u_h(y):=\frac{1}{\lambda_h r_h}\Big(u(x_h+r_h y)-{(u)_{B_{r_h}(x_h)}}%\fint_{B_{r_h}(x_h)}udx
 -r_h %\fint_{B_{r_h}(x_h)}\nabla {u}(x) dx
 {\big(\nabla u\big)_{B_{r_h}(x_h)}}\cdot y\Big),
\]
and set $\mathbb{A}_h:=%\fint_{B_{r_h}(x_h)}e(u)dx
{\big(e(u)\big)_{B_{r_h}(x_h)}}$. Then, up to a subsequence we may assume 
that $\mathbb{A}_h\to\mathbb{A}_\infty$ and 
\begin{equation}\label{e:Vmucptness}
 \fint_{B_1}|V_\mu(\lambda_he(u_h))|^2dx=\fint_{B_{r_h}(x_h)}|V_\mu(e(u)-\mathbb{A}_h)|^2dx
 =\Exc_u(x_h,r_h)=\lambda_h^2.
\end{equation}
Being $u$ a local minimizer {of $\Fz$} defined in \eqref{e:FF},  
$u_h$ is in turn a local minimizer of 
\[
\FFh(v)=\int_{B_1}\Fh(e(v))dx, 
\]
with integrand
\[
\Fh(\xi):=\lambda_h^{-2}\big(f_\mu(\mathbb{A}_h+\lambda_h\xi)-f_\mu(\mathbb{A}_h)
-\lambda_h\langle\nabla f_\mu(\mathbb{A}_h),\xi\rangle\big).
\]
Note that $\FFh(u_h)\leq c\,\calL^n(B_1)$ by (iii) in Lemma~\ref{l:Fhsup} and \eqref{e:Vmucptness}, 
thus by Theorem~\ref{t:Gammasup}, $(u_h)_h$ converges weakly to some function $u_\infty\in W^{1,2}(B_1,\R^n)$ 
in $W^{1,p\wedge 2}(B_1,\R^n)$, and actually, by Corollary~\ref{c:strongconv} 
we have for all $r\in(0,1)$
\begin{equation}\label{e:cptness}
  \lim_{h\uparrow\infty}\int_{B_r}\lambda_h^{-2}|V_\mu(\lambda_h e(u_h-u_\infty){)}|^2dx=0.
\end{equation}
Therefore, item (iii) in Lemma~\ref{l:Vmu} and a scaling argument give
for some constant $c=c(p)>0$ 
\begin{align*}
\lambda_h^{-2} &\Exc_u(x_h,\tau r_h)
=\lambda_h^{-2}\fint_{B_{\tau}}\left|V_\mu\Big(\lambda_h(e(u_h)-\big(e(u_h)\big)_{B_\tau}{)}\Big)\right|^2dx\\
\leq &c\,\lambda_h^{-2}\fint_{B_{\tau}}\left|V_\mu\Big(\lambda_h e(u_h-u_\infty)\Big)\right|^2dx
+c\,\lambda_h^{-2}\fint_{B_{\tau}}\left|V_\mu\Big(\lambda_h(e(u_\infty)-\big(e(u_\infty)\big)_{B_\tau})\Big)\right|^2dx\\
&+c\,\lambda_h^{-2}\fint_{B_{\tau}}
\left|V_\mu\Big(\lambda_h\big(\big(e(u_h)\big)_{B_\tau}-\big(e(u_\infty)\big)_{B_\tau}\big)\Big)\right|^2dx.
\end{align*}
The very definition of $V_\mu$, item (v) in Lemma~\ref{l:Vmu} and \eqref{e:cptness} 
yield 
\[
 \limsup_{h\uparrow\infty}\lambda_h^{-2} \Exc_u(x_h,\tau r_h)\leq c\mu^{\sfrac p2-1}
 \fint_{B_{\tau}}\left|e(u_\infty)-\big(e(u_\infty)\big)_{B_\tau}\right|^2dx.
\]
In particular, 
\[
 \limsup_{h\uparrow\infty}\lambda_h^{-2} \Exc_u(x_h,\tau r_h)\leq \widetilde{C}\tau^2\,,
\]
as $u_\infty$ is the solution of a linear elliptic system (cf. Corollary~\ref{c:pde}). 
Thus, by taking the constant $C>\widetilde{C}$, we reach a contradiction to \eqref{e:contra1}.
\end{proof}

We are finally ready to establish partial regularity and an estimate on the Hausdorff 
dimension of the singular set in the non-degenerate autonomous case. The degenerate case,
namely $\mu=0$, corresponding to the symmetrized $p$-laplacian, $p\neq 2,$ is not included 
in our results. %We start off dealing with the autonomous case. 
The non-autonomous case will be treated next via a perturbation argument.
We recall that in case $p=2$ the solutions are actually smooth.

Before proceeding with the proof, we introduce some notation: 
for $v\in W^{1,p}(\Omega;\R^n)$ let 
\begin{equation}\label{e:sigma1}
\Sigma_v^{(1)}:=\left\{x\in\Omega:\,\liminf_{r\downarrow 0}
\fint_{B_r(x)}\left|V_\mu(e(v(y)))-\big(V_\mu(e(v))\big)_{B_r(x)}\right|^2dy>0\right\},
\end{equation}
and
\begin{equation}\label{e:sigma2}
\Sigma_v^{(2)}:=\left\{x\in\Omega:\,\limsup_{r\downarrow 0}\left|\big(V_\mu(e(v))\big)_{B_r(x)}\right|
=\infty\right\}.
\end{equation}
\begin{theorem}\label{t:partial autonomous}
Let $n\geq 3$, $p\in(1,\infty)$ and $\mu>0$. 
Let $u\in W^{1,p}(\Omega;\R^n)$ be a local minimizer of $\Fz$ defined in \eqref{e:FF}.

Then, there exists an open set $\Omega_u\subseteq\Omega$ such that 
$u\in C^{1,\alpha}_{loc}(\Omega_u;\R^n)$ for all $\alpha\in(0,1)$. 
Moreover, 
\[
 \dim_{\mathcal{H}}(\Omega\setminus\Omega_u)\leq (n-q)\vee 0,
\]
where $q>2$ is the exponent in Proposition~\ref{p:HI}.
\end{theorem}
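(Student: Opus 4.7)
The plan is to run an indirect partial regularity argument based on the excess decay of Proposition~\ref{p:excessdecay}, supplemented with the higher integrability of $\nabla V_\mu(e(u))$ from Proposition~\ref{p:HI} to bound the Hausdorff dimension of the singular set. I define $\Omega_u$ to be the set of $x_0 \in \Omega$ admitting a radius $r_0 > 0$ with $\overline{B_{r_0}(x_0)} \subset \Omega$, $|(V_\mu(e(u)))_{B_{r_0}(x_0)}|$ bounded by a fixed constant $M$, and $\Exc_u(x_0, r_0) < \varepsilon$, where $\varepsilon$ is the smallness threshold furnished by Proposition~\ref{p:excessdecay} for appropriate parameters to be fixed below. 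Continuity in $x$ of the averages $(V_\mu(e(u)))_{B_r(x)}$ and of $\Exc_u(x,r)$ at fixed $r$ makes $\Omega_u$ open, and the very definition \eqref{e:sigma1}--\eqref{e:sigma2} of $\Sigma_u^{(i)}$, together with Lemma~\ref{l:DieningKaplicky} and the equivalence of $|V_\mu(\xi-\eta)|^2$ with $|V_\mu(\xi) - V_\mu(\eta)|^2$ on bounded sets (items (i)--(ii) of Lemma~\ref{l:Vmu}), yields $\Omega \setminus (\Sigma_u^{(1)} \cup \Sigma_u^{(2)}) \subseteq \Omega_u$.

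Fix $\alpha \in (0,1)$ and $x_0 \in \Omega_u$. Because $\mu > 0$, the map $V_\mu$ is a global $C^1$-diffeomorphism with locally Lipschitz inverse, so the bound on $|(V_\mu(e(u)))_{B_{r_0}(x_0)}|$ combined with smallness of the excess translates to $|(e(u))_{B_{r_0}(x_0)}| \leq L$ for a suitable $L = L(M)$. I then choose $\tau \in (0, 1/4)$ with $C(L)\tau^2 \leq \tau^{2\alpha}$, where $C(L)$ is the constant in Proposition~\ref{p:excessdecay}, and set $\varepsilon := \varepsilon(\tau, L)$ correspondingly. Iterating Proposition~\ref{p:excessdecay} produces $\Exc_u(x_0, \tau^k r_0) \leq \tau^{2k\alpha} \Exc_u(x_0, r_0)$ for all $k \geq 0$, provided the bound on $(e(u))_{B_{\tau^k r_0}(x_0)}$ is preserved along the iteration. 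This self-consistency (up to replacing $L$ by $2L$, say) follows from the telescoping estimate
\[
 \big|(e(u))_{B_{\tau^{k+1} r_0}(x_0)} - (e(u))_{B_{\tau^k r_0}(x_0)}\big|
 \leq c\, \Exc_u(x_0, \tau^k r_0)^{1/2} \leq c\,\tau^{k\alpha},
\]
whose right-hand side is geometrically summable; here the bi-Lipschitz equivalence of the $V_\mu$-metric and the Euclidean metric on bounded sets (guaranteed by $\mu > 0$) allows to pass from $\Exc_u$ to an $L^2$-oscillation of $e(u)$. Performing the same iteration at every $y$ in a neighborhood of $x_0$ produces a Morrey--Campanato decay $\fint_{B_\rho(y)} |V_\mu(e(u)) - (V_\mu(e(u)))_{B_\rho(y)}|^2 dx \leq c \rho^{2\alpha}$, whence $V_\mu(e(u)) \in C^{0,\alpha}_{loc}(\Omega_u)$ by Campanato's embedding.

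Since $\mu > 0$, $V_\mu$ is smoothly invertible, so $e(u) \in C^{0,\alpha}_{loc}(\Omega_u)$ for every $\alpha \in (0,1)$. Proceeding as in the proof of Proposition~\ref{p:Calfa}, I differentiate the Euler--Lagrange system \eqref{e:ELpde} (with $\cost = 0$) and apply Korn's inequality together with Schauder theory to the resulting linear uniformly elliptic system with Hölder-continuous coefficients $\nabla^2 f_\mu(e(u))$ to obtain $u \in C^{1,\alpha}_{loc}(\Omega_u; \R^n)$ for every $\alpha \in (0,1)$. For the Hausdorff dimension bound, Proposition~\ref{p:HI} provides $V_\mu(e(u)) \in W^{1,q}_{loc}$ for some $q > 2$; classical results on the Lebesgue points of Sobolev functions (via capacity estimates on the set of non-Lebesgue points of a $W^{1,q}$-map with $L^2$-oscillations) yield $\dim_\calH(\Sigma_u^{(1)} \cup \Sigma_u^{(2)}) \leq (n-q) \vee 0$, which combined with the inclusion $\Omega \setminus \Omega_u \subseteq \Sigma_u^{(1)} \cup \Sigma_u^{(2)}$ gives the announced bound on $\dim_\calH(\Omega \setminus \Omega_u)$.

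The hardest part is the self-consistency of the iteration, namely keeping $|(e(u))_{B_{\tau^k r_0}(x_0)}|$ uniformly bounded so that Proposition~\ref{p:excessdecay} remains applicable at every scale. This relies on carefully exploiting the local comparability of the $V_\mu$-metric with Euclidean distance when $\mu > 0$, which converts the geometric decay of the $V_\mu$-excess into a summable telescoping control on the differences of consecutive averages of $e(u)$; the argument would break down at $\mu = 0$, which is precisely why the non-degeneracy assumption $\mu > 0$ is essential.
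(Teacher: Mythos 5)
Your overall strategy matches the paper's: define the regular set via the sets $\Sigma_u^{(1)},\Sigma_u^{(2)}$, bound $\dim_\calH$ using $V_\mu(e(u))\in W^{1,q}_\loc$ from Proposition~\ref{p:HI}, iterate Proposition~\ref{p:excessdecay} with a telescoping control on $\big(e(u)\big)_{B_{\tau^k r}}$, and conclude via Campanato's theorem and the $C^1$-invertibility of $V_\mu$ (for $\mu>0$). The concluding detour through Schauder theory is unnecessary — once $V_\mu(e(u))\in C^{0,\alpha}_\loc$, inverting $V_\mu$ and applying Korn's inequality in Hölder spaces already gives $u\in C^{1,\alpha}_\loc$ — but it is harmless.

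There is a genuine gap in the telescoping step, specifically in the justification for $p\in(1,2)$. You claim to pass from $\Exc_u$ to the $L^2$-oscillation of $e(u)$ via ``bi-Lipschitz equivalence of the $V_\mu$-metric and the Euclidean metric on bounded sets.'' But $e(u)$ is not pointwise bounded, so the argument $\xi=e(u)(y)$ does not lie in a fixed bounded set. For $p\geq 2$ the step survives for a different reason: $|V_\mu(\xi)|^2\geq\mu^{(p-2)/2}|\xi|^2$ for \emph{all} $\xi$, so the bound $\Exc_u\gtrsim\mu^{(p-2)/2}\fint|e(u)-(e(u))_{B_r}|^2$ holds unconditionally (the paper instead uses $|\xi|^p\leq|V_\mu(\xi)|^2$, giving exponent $\sfrac1p$; both are geometrically summable). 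For $p<2$ the corresponding pointwise lower bound $|V_\mu(\xi)|\gtrsim|\xi|$ fails precisely where $|\xi|$ is large, and the inequality you need requires the more careful argument in \eqref{e:mean eu 0}: Lemma~\ref{l:tecnico2} followed by H\"older's and Young's inequalities, yielding
\[
\fint_{B_\rho(x_0)}\big|e(u)-(e(u))_{B_\rho(x_0)}\big|\,dy
\leq c\big(\mu^{\sfrac p2}+|(e(u))_{B_\rho(x_0)}|^p+\Exc_u(x_0,\rho)\big)^{\frac1p-\frac12}\Exc_u(x_0,\rho)^{\frac12},
\]
where the prefactor must itself be controlled inductively along the iteration. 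Your sketch elides this, and ``bi-Lipschitz on bounded sets'' is not a correct rationale for it; the exponent $\sfrac12$ you wrote is right in the end, but obtaining it for $p<2$ is the nontrivial part of the self-consistency argument.
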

\begin{proof} We shall show in what follows that under the standing assumptions the singular and 
regular sets are given respectively by 
\begin{equation}\label{e:singular regular sets autonomous}
\Sigma_u:=\Sigma_u^{(1)}\cup\Sigma_u^{(2)},\qquad \Omega_u:=\Omega\setminus\Sigma_u.
\end{equation}

By the higher integrability property established in Proposition~\ref{p:HI}, 
we know that $V_\mu(e(u))\in W^{1,q}_{loc}(\Omega;\R^{n\times n}_\sym)$ for some $q>2$. 
Therefore, $\Sigma_u=\emptyset$ if $q>n$ by Morrey's theorem.
Otherwise, if $B_{r}(x_0)\subseteq\Omega$, by Poincar\`e's inequality for all 
$r\in(0,\mathrm{dist}(x_0,\partial\Omega))$
\[
 \fint_{B_r(x_0)}\left|V_\mu(e(u))-\big(V_\mu(e(u))\big)_{B_{r}(x_0)}\right|^2dx\leq 
 c\Big(r^{q-n}\int_{B_r(x_0)}|\nabla V_\mu(e(u))|^qdx\Big)^{\sfrac 2q}.
\]
Therefore, $\mathcal{H}^{n-q}\big(\Sigma_u^{(1)})=0$ by 
standard density estimates (cf. \cite[Proposition~2.7]{Giusti} or \cite[Theorem~2.56]{AmbrosioFuscoPallara}), 
 and $\dim_{\mathcal{H}}\Sigma_u^{(2)}\leq n-q$ by 
standard properties of Sobolev functions (cf. \cite[Theorem~3.22]{Giusti}). 
In conclusion, $\dim_{\mathcal{H}}\Sigma_u\leq n-q$.

Let us prove that $\Omega_u$ is open and that $u\in C^{1,\alpha}(\Omega_u;\R^n)$ for all 
$\alpha\in(0,1)$. Let $x_0\in\Omega_u$. 
First note that {$\sup_{r}|\big(V_\mu(e(u))\big)_{B_r(x_0)}|<\infty$ being 
$x_0\in\Omega\setminus\Sigma_u^{(2)}$. Additionally, since
\begin{align*}%\label{e:equivalence}
 \fint_{B_\rho(x_0)}&|V_\mu(e(u))|^2dy
 \leq 
 c(p) \fint_{B_\rho(x_0)}\left|V_\mu(e(u))-\big(V_\mu(e(u))\big)_{B_\rho(x)}\right|^2dy+
 c(p) \left|\big(V_\mu(e(u))\big)_{B_\rho(x)}\right|^2,
 \end{align*}
being $x_0\in\Omega\setminus\Sigma_u^{(1)}$ we conclude  
 \begin{align}\label{e:equivalence0}
\liminf_{\rho\downarrow 0} \fint_{B_\rho(x_0)}&|V_\mu(e(u))|^2dy<\infty.
 \end{align}
The last inequality
and item (v) in Lemma~\ref{l:Vmu} yield for some $L>0$ 
\[
\liminf_{\rho\downarrow 0}\left|\big(e(u)\big)_{B_\rho(x_0)}\right|<L.
\]
In view of this, Lemma~\ref{l:Vmu} (item (i) if $p\geq 2$ and item (ii) if $p\in(1,2)$, 
respectively) and Lemma~\ref{l:DieningKaplicky} yield that $\liminf_{\rho\downarrow 0} \Exc_u(x_0,\rho)=0$.
Therefore, for all $\eta>0$, $x_0$ belongs to the set 
\[
\Omega_u^{L,\eta}:=\left\{x\in\Omega:\,
\left|\big(e(u)\big)_{B_r(x)}\right|<L,\quad
\Exc_u(x,r)<\eta
\quad\text{for some $r\in(0,\textrm{dist}(x,\partial\Omega))$}\right\}.
\]
In particular, $\Omega_u{\subseteq}\cup_{L\in\N}
\Omega_u^{L,\eta(L)}$, for every $\eta(L)>0$, and clearly 
each $\Omega_u^{L,\eta(L)}\subseteq\Omega$ is open.
We claim that actually $\Omega_u=\cup_{L\in\N}\Omega_u^{L,\overline{\eta}(L)}$ 
for some $\overline{\eta}(L)=\overline{\eta}(L,n,p,\alpha)$ conveniently defined in what follows.
To this aim we distinguish the super-quadratic and sub-quadratic cases.

We start with the range of exponents $p\geq 2$. 
To check the claim fix any $L\in\N$ and $x_0\in \Omega_u^{L,\eta}$, with corresponding
radius $r$, then we have for all $\tau\in(0,\sfrac 14)$ 
\begin{align}\label{e:mean eu}
\left|\big(e(u)\big)_{B_{\tau r}(x_0)}\right| 
&\leq \left|\big(e(u)\big)_{B_r(x_0)}
\right|+\left|\big(e(u)\big)_{B_{\tau r}(x_0)}-\big(e(u)\big)_{B_r(x_0)}\right|\notag\\
&\leq\left|\big(e(u)\big)_{B_r(x_0)}\right|
+\tau^{-n}\fint_{B_{r}(x_0)}\left|e(u)-\big(e(u)\big)_{B_r(x_0)}\right|dy\notag\\
& \leq \left|\big(e(u)\big)_{B_r(x_0)}\right|+\tau^{-n}(\Exc_u(x_0,r))^{\sfrac 1p},
 \end{align}
where for the last inequality we have used item (iv) of Lemma~\ref{l:Vmu} 
for $p\geq 2$. Moreover, if $\varepsilon(\tau,L)$ is the parameter provided by 
Proposition~\ref{p:excessdecay}, and $0<\eta\leq \varepsilon(\tau,L)$ we infer that 
\begin{equation}\label{e:decay bis}
 \Exc_u(x_0,\tau r)\leq C\tau^2 \Exc_u(x_0,r).
\end{equation}
Having fixed any $\alpha\in(0,1)$ we choose $\tau=\tau(\alpha,L)\in(0,{\sfrac 14})$ 
such that $C\tau^{2\alpha}<1$, with $C=C(L)>0$ the constant in \eqref{e:excessdecay}. 
Therefore, choosing $0<\eta\leq \varepsilon(\tau,L)\wedge \tau^{np}$ we infer from \eqref{e:mean eu} and 
\eqref{e:decay bis}
\[
|(e(u))_{B_{\tau r}(x_0)}|<L+1,\quad
 \Exc_u(x_0,\tau r)<\tau^{2(1-\alpha)}{\Exc_u(x_0,r)}.%\eta.
\]
The latter is the basic step of an induction argument leading to 
\begin{equation}\label{e:excess iteration}
|(e(u))_{B_{\tau^j r}(x_0)}|<L+1,\quad
 \Exc_u(x_0,\tau^j r)<\tau^{2(1-\alpha)j}{\Exc_u(x_0,r)}%\eta
\end{equation}
for all $j\in \N$. Note that from the last two inequalities we conclude readily that $x_0\in\Omega_u$.

Hence we are left with showing \eqref{e:excess iteration}. To this aim fix $j\in\N$,
$j\geq 2$, and assume \eqref{e:excess iteration} true for all $0\leq k\leq j-1$ (as noticed
the first induction step corresponding to $j=1$ has already been established above). Then, by \eqref{e:mean eu} we get
\[
 |(e(u))_{B_{\tau^j r}(x_0)}|\leq |(e(u))_{B_{r}(x_0)}|
 +\tau^{-n}\sum_{k=0}^{j-1}\big(\Exc_u(x_0,\tau^{k}r)\big)^{\sfrac 1p}<L
 +\frac{\tau^{-n}}{1-\tau^{\sfrac2p(1-\alpha)}}{(\Exc_u(x_0,r))^{\sfrac1p}.}%\eta^{\sfrac1p}.
\]
We get the first estimate in \eqref{e:excess iteration} provided 
$0<\eta\leq\varepsilon(\tau,L)\wedge\tau^{np}(1-\tau^{\sfrac2p(1-\alpha)})^p$. 
Finally, to get the second inequality in \eqref{e:excess iteration} it suffices to 
assume in addition $0<\eta<\varepsilon(\tau,L+1)$ and apply Proposition~\ref{p:excessdecay}.
In conclusion, we set 
\begin{equation}\label{eta0_magg2}
\overline{\eta}(L):=\varepsilon(\tau,L+1)\wedge\varepsilon(\tau,L)\wedge\tau^{np}(1-\tau^{\sfrac2p(1-\alpha)})^p
\end{equation}
(recall that $\tau=\tau(\alpha,L)$).

If $p\in(1,2)$ we only highlight the needed changes since the strategy of proof is 
completely analogous. We start off noting that we have for some $c=c(p)>0$ 
(which may vary from line to line)
\begin{align}\label{e:mean eu 0}
&\fint_{B_{\rho}(x_0)}|e(u)-(e(u))_{B_{\rho}(x_0)}|^pdy 
\leq c \fint_{B_{\rho}(x_0)}\frac{\left|V_\mu(e(u))-V_\mu\big((e(u))_{B_{\rho}(x_0)}\big)\right|^p}
{\big(\mu+|e(u)|^2+|(e(u))_{B_{\rho}(x_0)}|^2\big)^{\frac{p(p-2)}{4}}}dy\notag\\
&\leq c \Big(\fint_{B_{\rho}(x_0)}\left|V_\mu(e(u))-V_\mu\big((e(u))_{B_{\rho}(x_0)}\big)\right|^2dy\Big)^{\frac p2}
\Big(\fint_{B_{\rho}(x_0)}\big(\mu+|e(u)|^2+|(e(u))_{B_{\rho}(x_0)}|^2\big)^{\frac{p}{2}}dy\Big)^{1-\frac p2}\notag\\
&\leq c(\Exc_u(x_0,\rho))^{\frac p2}
\Big(\mu^{{\sfrac p2}}+|(e(u))_{B_{\rho}(x_0)}|^p+
\fint_{B_{\rho}(x_0)}|e(u)-(e(u))_{B_{\rho}(x_0)}|^pdy\Big)^{1-\frac p2}\notag\\
&\leq c\big(\mu^{{\sfrac p2}}+|\big(e(u)\big)_{B_{\rho}(x_0)}|^p\big)^{1-\frac p2}(\Exc_u(x_0,\rho))^{\frac p2}
+c\,\Exc_u(x_0,\rho)+
\frac12\fint_{B_{\rho}(x_0)}|e(u)-(e(u))_{B_{\rho}(x_0)}|^pdy,
\end{align}
where we have used Lemma~\ref{l:tecnico2} in the first inequality, H\"older's inequality 
in the second, item~(ii) of Lemma~\ref{l:Vmu} in the third, and Young's inequality in 
the fourth. Therefore, we get
\[
 \fint_{B_{\rho}(x_0)}|e(u)-(e(u))_{B_{\rho}(x_0)}|dy \leq
 c\big(\mu^{{\sfrac p2}}+|(e(u))_{B_{\rho}(x_0)}|^p
 +\Exc_u(x_0,\rho)\big)^{\frac1p-\frac 12}
 (\Exc_u(x_0,\rho))^{\frac 12}
\]
for some constant $c=c(p)>0$. In turn, with fixed $L\in\N$ and $x_0\in \Omega_u^{L,\eta}$, 
for all $\tau\in(0,\sfrac 14)$ we have instead of \eqref{e:mean eu} 
\begin{equation}\label{e:mean eu bis}
 |(e(u))_{B_{\tau r}(x_0)}|\leq|(e(u))_{B_r(x_0)}|
 +c\tau^{-n} \big(\mu^{{\sfrac p2}}+L^p+\Exc_u(x_0,r)\big)^{\frac1p-\frac 12}(\Exc_u(x_0,r))^{\frac 12}.
\end{equation}
Having fixed any $\alpha\in(0,1)$ and choosing $\tau=\tau(\alpha,L)\in(0,{\sfrac 14})$ 
such that $C\tau^{2\alpha}<1$, with $C=C(L)>0$ the constant in \eqref{e:excessdecay}, we can 
establish inductively \eqref{e:excess iteration} provided we choose
\begin{equation}\label{eta0_min2}
\overline{\eta}(L):=\varepsilon(\tau,L+1)\wedge\varepsilon(\tau,L)
\wedge 1\wedge c\tau^{2n}\big(\mu^{\sfrac p2}+L^p+1\big)^{1-\frac 2p}(1-\tau^{1-\alpha})^2,
\end{equation}
with $c=c(p)>0$.

Eventually, for any $p\in(1,\infty)$, 
$V_\mu(e(u))\in C^{0,1-\alpha}_{loc}(\Omega_u^{L,\eta(L)};\R^{n\times n}_\sym)$
for all $\alpha\in(0,1)$ by Campanato's theorem and \eqref{e:excess iteration}.
The conclusion for $e(u)$ then follows at once from %item (iv) in Lemma~\ref{l:Vmu}. 
the fact that $V_\mu$ is an homeomorphism with 
inverse of class $C^1(\R^{n\times n};\R^{n\times n})$ if $p>2$ and $\mu>0$ or if $p\in(1,2]$.
}\end{proof}

\subsection{Partial regularity in the non-degenerate case}\label{ss:partial non-autonomous}

In this section we prove partial regularity in the general non-degenerate case by following
the so called direct methods for regularity. To this aim, with given $\cost,\,\mu>0$ and 
a local minimizer $u$ on $W^{1,p}(\Omega;\R^n)$ of the energy $\FF(\cdot)$, with fixed 
$B_r(x_0)\subseteq\Omega$, we consider {the} minimizer $w$ of the corresponding autonomous functional 
(on the ball $B_r(x_0)$)
\begin{equation}\label{e:F0}
\Fz\big(v,B_r(x_0)\big):=\int_{B_r(x_0)}f_\mu(e(v))dx
\end{equation}
on $u+W^{1,p}_0(B_r(x_0),\R^n)$. 
{This implies $\big(e(u)\big)_{B_r(x_0)}=\big(e(w)\big)_{B_r(x_0)}$.}

\begin{lemma}\label{l:energy comparison}
 Let $n\geq 3$, $p\in(1,\infty)$, $\cost$ and $\mu>0$, $B_r(x_0)\subseteq\Omega$.
 Let $u$ be a local minimizer of $\FF$ in \eqref{e:FF} and $w$ be defined as above.
 
 Then, there exists a constant $c=c(n,p)>0$ such that for all symmetric matrices $\xi\in\R^{n\times n}_\sym$
 \begin{equation}\label{e:energy comparison1}
 \int_{B_r(x_0)}|V_\mu(e(w))-V_\mu(\xi)|^2dx\leq c\int_{B_r(x_0)}|V_\mu(e(u))-V_\mu(\xi)|^2dx,
 \end{equation}
 and
 \begin{equation}\label{e:energy comparison2}
 \int_{B_r(x_0)}|V_\mu(e(u))-V_\mu(e(w))|^2dx\leq c\big(\Fz(u,B_r(x_0))-\Fz(w,B_r(x_0))\big).
 \end{equation}
Moreover, we have
\begin{equation}\label{e:excessdecay5}
  \Exc_w(x_0,r)\leq c_0\,\Exc_u(x_0,r)
 \end{equation}
for some constant $c_0=c_0\big(n,p,\mu, M)>0$, provided that $|(e(u))_{B_r(x_0)}|\leq M$.
\end{lemma}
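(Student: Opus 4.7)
The plan is to rely on the standard equivalence valid under (Conv),
\[
\frac 1c |V_\mu(\xi)-V_\mu(\eta)|^2 \le f_\mu(\xi)-f_\mu(\eta)-\langle \nabla f_\mu(\eta),\xi-\eta\rangle \le c|V_\mu(\xi)-V_\mu(\eta)|^2
\]
for every $\xi,\,\eta\in\R^{n\times n}_\sym$ and some $c=c(p)>0$. This is obtained from Taylor's formula, the two-sided Hessian bound \eqref{e:bdhessf}, Lemma~\ref{l:tecnico} applied with $\gamma=p/2-1$ and weight $(1-t)$ (i.e.~$r=1$), and Lemma~\ref{l:tecnico2} applied with $\gamma=(p-2)/4$ to identify $|\xi-\eta|^2(\mu+|\xi|^2+|\eta|^2)^{p/2-1}$ with $|V_\mu(\xi)-V_\mu(\eta)|^2$. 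Once this equivalence is in hand, the three estimates follow quickly.

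For \eqref{e:energy comparison2}, I would use that $w-u\in W^{1,p}_0(B_r(x_0);\R^n)$ is an admissible perturbation in the Euler--Lagrange equation for the minimizer $w$ of $\Fz(\cdot,B_r(x_0))$, so that
\[
\int_{B_r(x_0)}\langle \nabla f_\mu(e(w)),\,e(u)-e(w)\rangle\,dx=0.
\]
Integrating the left inequality of the equivalence pointwise with $\xi=e(u)(x)$, $\eta=e(w)(x)$ and subtracting the vanishing integral above immediately gives \eqref{e:energy comparison2}.

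For \eqref{e:energy comparison1}, fix a symmetric matrix $\xi$ and apply the left inequality of the equivalence pointwise with constant $\eta=\xi$ and variable $e(w)(x)$. Integrating on $B_r(x_0)$, one may then invoke (a) the minimality $\Fz(w,B_r(x_0))\le \Fz(u,B_r(x_0))$ to replace $\int f_\mu(e(w))\,dx$ by $\int f_\mu(e(u))\,dx$, and (b) the mean-value identity $\int_{B_r(x_0)}(e(w)-e(u))\,dx=0$ (from $w-u\in W^{1,p}_0$ and the divergence theorem, noted in the paragraph preceding the lemma) to replace $\int\langle \nabla f_\mu(\xi),e(w)-\xi\rangle\,dx$ by $\int\langle \nabla f_\mu(\xi),e(u)-\xi\rangle\,dx$. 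The resulting quantity is controlled by $c\int |V_\mu(e(u))-V_\mu(\xi)|^2\,dx$ via the right inequality of the equivalence.

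Finally, for \eqref{e:excessdecay5} I would take $\xi=(e(u))_{B_r(x_0)}=(e(w))_{B_r(x_0)}$ (the equality noted right before the lemma) in \eqref{e:energy comparison1}, and then relate $|V_\mu(e(v))-V_\mu((e(v))_{B_r(x_0)})|^2$ to $|V_\mu(e(v)-(e(v))_{B_r(x_0)})|^2$ for $v\in\{u,w\}$ by using Lemma~\ref{l:Vmu}(i) if $p\ge 2$ and Lemma~\ref{l:Vmu}(ii) if $p\in(1,2)$. In each case one direction of the equivalence is unconditional, while the other carries a constant depending on $\mu$ and the norm of the base point; the hypothesis $|(e(u))_{B_r(x_0)}|\le M$ controls this norm and yields the dependence $c_0=c_0(n,p,\mu,M)$. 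No serious obstacle is expected: the only delicate point is the bookkeeping of the $|\eta|\le L$ condition in Lemma~\ref{l:Vmu}, which is precisely what the bound $M$ is introduced to handle.
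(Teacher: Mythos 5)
Your proposal is correct and follows essentially the same route as the paper: establish the two-sided comparison between $f_\mu(\eta)-f_\mu(\xi)-\langle\nabla f_\mu(\xi),\eta-\xi\rangle$ and $|V_\mu(\eta)-V_\mu(\xi)|^2$ via Taylor's formula, \eqref{e:bdhessf}, and Lemmata~\ref{l:tecnico}--\ref{l:tecnico2}; then derive \eqref{e:energy comparison1} from minimality of $w$ together with the vanishing of $\int\langle\nabla f_\mu(\xi),e(u-w)\rangle\,dx$, derive \eqref{e:energy comparison2} from the Euler--Lagrange equation for $w$, and derive \eqref{e:excessdecay5} by taking $\xi=(e(u))_{B_r(x_0)}=(e(w))_{B_r(x_0)}$ and invoking Lemma~\ref{l:Vmu}(i)/(ii), with the bound $|(e(u))_{B_r(x_0)}|\le M$ supplying the needed control on the base point. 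The bookkeeping of which direction of Lemma~\ref{l:Vmu} is conditional is handled correctly.
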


\begin{proof}
Note that for all symmetric matrices $\xi,\,\eta \in\R^{n\times n}_\sym$
\[
 f_\mu(\eta)-f_\mu(\xi)-\langle\nabla f_\mu(\xi),\eta-\xi\rangle
 =\int_0^1\langle\nabla^2f_\mu(\xi+t(\eta-\xi))(\eta-\xi),\eta-\xi)\rangle(1-t)dt.
\]
Therefore, from \eqref{e:bdhessf} and Lemmata~\ref{l:tecnico}, \ref{l:tecnico2} we infer 
for some constant $c=c(p)>0$
\begin{equation}\label{e:linear}
c^{-1}|V_\mu(\eta)-V_\mu(\xi)|^2\leq 
f_\mu(\eta)-f_\mu(\xi)-\langle\nabla f_\mu(\xi),\eta-\xi\rangle\leq c|V_\mu(\eta)-V_\mu(\xi)|^2.
\end{equation}
Since for all $\varphi\in W^{1,p}_0(B_r(x_0),\R^n)$
\[
 \int_{B_r(x_0)}\langle\nabla f_\mu(\xi),e(\varphi)\rangle dx=0,
\]
from the minimality of $w$ for $\Fz(\cdot,B_r(x_0))$ and since $u-w\in W^{1,p}_0(\Omega;\R^n)$ 
we get that
\begin{multline*}
 \int_{B_r(x_0)} \big(f_\mu(e(w))-f_\mu(\xi)-\langle\nabla f_\mu(\xi),e(w)-\xi\rangle\big)dx\\ 
 \leq
 \int_{B_r(x_0)} \big(f_\mu(e(u))-f_\mu(\xi)-\langle\nabla f_\mu(\xi),e(u)-\xi\rangle\big)dx,
 \end{multline*}
and \eqref{e:energy comparison1} follows at once from \eqref{e:linear}.

For \eqref{e:energy comparison2} we argue analogously: we use the minimality of $w$ and
the condition $u-w\in W^{1,p}_0(\Omega;\R^n)$, 
to infer for all $\varphi\in W^{1,p}_0(B_r(x_0);\R^n)$
\[
 \int_{B_r(x_0)}\langle\nabla f_\mu(e(w)),e(\varphi)\rangle dx=0.
\]
The conclusion follows at once by \eqref{e:linear}. 

Finally, to prove \eqref{e:excessdecay5} we use Lemma~\ref{l:Vmu} (item (i) if $p\geq 2$, 
item (ii) if $p\in(1,2)$) %\cite[Lemma~2.8]{DieningKaplicky} {perché serve? FI} 
and \eqref{e:energy comparison1} 
with $\xi=(e(u))_{B_r(x_0)}=(e(w))_{B_r(x_0)}$ to conclude that 
\[
  \Exc_w(x_0,r)\leq c\fint_{B_r(x_0)}|V_\mu(e(w))-V_\mu(\xi)|^2dx\leq 
  c\fint_{B_r(x_0)}|V_\mu(e(u))-V_\mu(\xi)|^2dx\leq c\,\Exc_u(x_0,r).
\]
{for some constant $c=c\big(n,p,\mu, |\big(e(u)\big)_{B_r(x_0)}|\big)>0$}.
%if $p\geq 2$, and $c=c\big(n,p,\mu, |\big(e(w)\big)_{B_r(x_0)}|\big)>0$
%if $p\in(1,2)$.}
\end{proof}

We are now ready to extend the result of Section~\ref{ss:partial autonomous} to the 
non-autonomous case. 
Besides the sets $\Sigma_v^{(1)}$ introduced in \eqref{e:sigma1} and 
$\Sigma_v^{(2)}$ in \eqref{e:sigma2}, in the framework under examination 
it is necessary to consider additionally the sets 
 \begin{multline}\label{e:sigma3}
 \Sigma_v^{(3)}:=
  \left\{x\in\Omega:\,{\limsup_{r\downarrow 0}}\fint_{B_r(x)}|v(y)-(v)_{B_r(x)}|^pdy>0\right\}\\
  \cup\left\{x\in\Omega:\,\limsup_{r\downarrow 0}%\left|\fint_{B_r(x)}v(y)dy\right|
  |(v)_{B_r(x)}|=\infty\right\},
 \end{multline}
 and 
 \begin{align}\label{e:sigma4}
 \Sigma_v^{(4)}:=\left\{x\in\Omega:\,\limsup_{r\downarrow 0}
  |(\nabla v)_{B_r(x)}|=\infty\right\}
   \end{align}
for all $v\in W^{1,p}(\Omega;\R^n)$. 
Note that $\Sigma_v^{(3)}$ is actually empty for exponents $p>n$. More generally
we shall carefully estimate the Hausdorff dimension of such a set using Sobolev 
embedding and the results in Propositions~\ref{p:regVmusuper} and \ref{p:regVmusub}.
\begin{theorem}\label{t:partial}
Let $n\geq 3$, $p\in(1,\infty)$, $\cost$ and $\mu>0$, 
$g\in W^{1,p}\cap L^\infty(\Omega;\R^n)$ if $p\in(2,\infty)$ and 
$g\in L^\infty(\Omega;\R^n)$ if $p\in(1,2]$. 
Let $u$ be a local minimizer on $W^{1,p}(\Omega;\R^n)$ of $\FF$ in \eqref{e:FF}.

Then, there exists an open set $\Omega_u\subseteq\Omega$ such that $u\in C^{1,\beta}_\loc(\Omega_u;\R^n)$ 
for all $\beta\in(0,\sfrac12)$. Moreover,  
\[
 \dim_{\mathcal{H}}(\Omega\setminus\Omega_u)\leq (n-\widetilde{q})\vee 0,
\]
where $\widetilde{q}:=q\wedge p^\ast\wedge2^\ast$, $q>2$ being the exponent in 
Proposition~\ref{p:HI}.
\end{theorem}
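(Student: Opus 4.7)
The plan is to carry out the classical direct/perturbative scheme around any candidate regular point $x_0$: replace $u$ on $B_r(x_0)$ by the minimizer $w$ of the autonomous functional $\Fz(\cdot,B_r(x_0))$ with the same boundary trace, apply the autonomous excess decay of Proposition~\ref{p:excessdecay} to $w$, and transfer it back to $u$ via the comparison Lemma~\ref{l:energy comparison}, paying an error proportional to $\cost\int_{B_r}(|w-g|^p-|u-g|^p)\,dx$. Mirroring the proof of Theorem~\ref{t:partial autonomous}, the candidate regular set will be $\Omega_u:=\Omega\setminus\Sigma_u$ where
\[
\Sigma_u:=\Sigma_u^{(1)}\cup\Sigma_u^{(2)}\cup\Sigma_u^{(3)}\cup\Sigma_u^{(4)};
\]
the two additional pieces $\Sigma_u^{(3)}$ and $\Sigma_u^{(4)}$ are included precisely so that, outside $\Sigma_u$, one has the local $L^\infty$ bound on $u$ and the boundedness of $(\nabla u)_{B_r(x_0)}$ that are needed to control the perturbation error.

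For the dimension estimate I argue set by set. Proposition~\ref{p:HI} gives $V_\mu(e(u))\in W^{1,q}_\loc$ for some $q=q(n,p,\cost)>2$, so classical density and capacity arguments (cf.~\cite[Prop.~2.7, Thm.~3.22]{Giusti}) imply $\dim_{\mathcal H}(\Sigma_u^{(1)}\cup\Sigma_u^{(2)})\le n-q$. Sobolev embedding $W^{1,p}_\loc\hookrightarrow L^{p^*}_\loc$ yields $\dim_{\mathcal H}\Sigma_u^{(3)}\le n-p^*$, and Propositions~\ref{p:regVmusuper}--\ref{p:regVmusub} supply $u\in W^{2,p\wedge 2}_\loc$, hence $\nabla u\in L^{(p\wedge 2)^*}_\loc$ and $\dim_{\mathcal H}\Sigma_u^{(4)}\le n-(p\wedge 2)^*=n-(p^*\wedge 2^*)$. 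Taking the maximum produces $\dim_{\mathcal H}\Sigma_u\le n-\widetilde q$.

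For the $C^{1,\beta}$ regularity fix $x_0\in\Omega_u$. Since $x_0\notin\Sigma_u^{(1)}\cup\Sigma_u^{(2)}$, Lemmas~\ref{l:Vmu} and \ref{l:DieningKaplicky} provide radii $r_h\downarrow 0$ along which $\Exc_u(x_0,r_h)\to 0$ and $|(e(u))_{B_{r_h}(x_0)}|\le L(x_0)$, and since $x_0\notin\Sigma_u^{(3)}$ there are local $L^\infty$ bounds on $u$. On such a ball let $w$ be the $\Fz(\cdot,B_r(x_0))$-minimizer with trace $u$ on $\partial B_r(x_0)$. Lemma~\ref{l:Vmu}(iii), \eqref{e:excessdecay5}, Lemma~\ref{l:energy comparison} and Proposition~\ref{p:excessdecay} applied to $w$ combine into
\[
\Exc_u(x_0,\tau r)\le C\tau^2\,\Exc_u(x_0,r)+c\,\tau^{-n}|B_r(x_0)|^{-1}\bigl(\Fz(u,B_r)-\Fz(w,B_r)\bigr).
\]
Minimality of $u$ for $\FF$ bounds $\Fz(u,B_r)-\Fz(w,B_r)$ by $\cost\int_{B_r}(|w-g|^p-|u-g|^p)\,dx$; a mean-value/H\"older estimate, Korn--Poincar\'e in $W^{1,p}_0(B_r(x_0);\R^n)$, and the Lemma~\ref{l:Vmu}(i)--(iv) passage between $\|e(u-w)\|_{L^p}$ and $\|V_\mu(e(u))-V_\mu(e(w))\|_{L^2}$ yield $\|u-w\|_{L^p(B_r)}\le c\,r\,(\Fz(u,B_r)-\Fz(w,B_r))^{1/p}$, which after absorption and use of the local $L^\infty$ bounds on $u,w,g$ produces
\[
\frac{\Fz(u,B_r)-\Fz(w,B_r)}{|B_r(x_0)|}\le c\,\cost^{p/(p-1)}r^{\sigma},\qquad \sigma:=\min\bigl(2,\tfrac{p}{p-1}\bigr).
\]
The resulting iteration $\Exc_u(x_0,\tau r)\le C\tau^2\,\Exc_u(x_0,r)+C\tau^{-n}r^\sigma$ can be processed exactly as in Theorem~\ref{t:partial autonomous}, propagating the cap on $|(e(u))_{B_r(x_0)}|$ via \eqref{e:mean eu} or \eqref{e:mean eu bis} and then invoking \cite[Lemma~7.3]{Giusti}, to give Campanato-type decay $\Exc_u(x_0,r)\le c\,r^{2\beta}$ for any $\beta<\sigma/2$; as $\tfrac{p}{p-1}\downarrow 1$ for $p\uparrow\infty$ the universal range is exactly $\beta\in(0,\sfrac12)$. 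Campanato's theorem, together with $V_\mu^{-1}\in C^1(\R^{n\times n};\R^{n\times n})$ for $\mu>0$, then upgrades this into $u\in C^{1,\beta}_\loc(\Omega_u;\R^n)$.

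The hardest step I anticipate is the comparison bound on the energy defect $\Fz(u,B_r)-\Fz(w,B_r)$. In the sub-quadratic regime $p\in(1,2)$ the Lemma~\ref{l:Vmu} relations between $|V_\mu(\xi-\eta)|$ and $|V_\mu(\xi)-V_\mu(\eta)|$ depend on a cap $L$ for $|\eta|$, so this cap must be propagated along the entire iteration exactly in the spirit of \eqref{e:mean eu bis}, while keeping all constants explicit in $L$, in $\mu$ and in the available $L^\infty$ norms of $u$ and $g$. Simultaneously verifying at every scale the smallness threshold required in Proposition~\ref{p:excessdecay} is the delicate bookkeeping at the heart of the argument.
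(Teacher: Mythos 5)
Your strategy coincides with the paper's essentially step for step: same singular-set decomposition $\Sigma_u=\Sigma_u^{(1)}\cup\Sigma_u^{(2)}\cup\Sigma_u^{(3)}\cup\Sigma_u^{(4)}$, same dimension bounds from Proposition~\ref{p:HI} and the $W^{2,p\wedge 2}_\loc$ regularity, same comparison with the autonomous minimizer $w$ on $B_r(x_0)$ via Lemma~\ref{l:energy comparison} and Proposition~\ref{p:excessdecay}, and the same excess-decay iteration with propagation of the caps on $|(e(u))_{B_r(x_0)}|$, $|(\nabla u)_{B_r(x_0)}|$ and the means of $u$. Two intermediate claims, however, are not justified as written.

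First, the bound $\dim_{\mathcal H}\Sigma_u^{(3)}\le n-p^\ast$ does not follow from the embedding $W^{1,p}_\loc\hookrightarrow L^{p^\ast}_\loc$: integrability of $u$ alone gives no Hausdorff-dimension control on the set where averages diverge or oscillations persist. The paper instead uses $u\in W^{2,p\wedge 2}_\loc$ from Propositions~\ref{p:regVmusuper}--\ref{p:regVmusub}, hence $u\in W^{1,p^\ast\wedge 2^\ast}_\loc$, and invokes the $W^{1,s}$ potential estimate \cite[Theorem~3.22]{Giusti}; this handles $\Sigma_u^{(3)}$ and $\Sigma_u^{(4)}$ together and yields precisely the $n-(p^\ast\wedge 2^\ast)$ bound you stated for $\Sigma_u^{(4)}$.

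Second, your estimate
\begin{equation*}
\|u-w\|_{L^p(B_r)}\le c\,r\,\bigl(\Fz(u,B_r)-\Fz(w,B_r)\bigr)^{1/p}
\end{equation*}
is valid only for $p\ge 2$, where Lemma~\ref{l:Vmu}(iv) gives $|\xi|^p\le|V_\mu(\xi)|^2$ and hence
$\|e(u-w)\|_{L^p}^p\lesssim\int|V_\mu(e(u))-V_\mu(e(w))|^2\lesssim\Fz(u)-\Fz(w)$. For $p\in(1,2)$ the inequality in Lemma~\ref{l:Vmu}(iv) is reversed, and the passage from $\|e(u-w)\|_{L^p}$ to the energy defect must go through Lemma~\ref{l:tecnico2} and H\"older, which produces
\begin{equation*}
\|e(u-w)\|_{L^p}^p\le c\Bigl(\Fz(u)-\Fz(w)\Bigr)^{p/2}\Bigl(\int_{B_r}(\mu+|e(u)|^2+|e(w)|^2)^{\sfrac p2}dx\Bigr)^{1-p/2}.
\end{equation*}
The extra factor is controlled through the $L^\infty$-type caps (cf.\ \eqref{e:stima banana2 sub} and the surrounding argument), and if this is carried through carefully one does arrive at a rate of the order you quote, though with a different power of $\cost$ in the subquadratic regime; as you wrote it the formula is simply wrong for $p<2$ and conceals exactly the bookkeeping you flag as delicate. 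For what it is worth, the paper sidesteps this absorption chain entirely: it applies Young's inequality once in \eqref{e:decay u2} and settles for the coarser error $c\,\tau^{-n}r$ for every $p$, which is all that is needed to conclude $\beta\in(0,\sfrac12)$. Your finer $r^{\sigma}$ rate would sharpen the H\"older exponent if executed correctly, but it is not required for the theorem as stated.
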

\begin{proof}[Proof of Theorem~\ref{t:partial}]
In the current setting the singular and regular sets are defined respectively by 
$\Sigma_u:=\Sigma_u^{(1)}\cup \Sigma_u^{(2)}\cup\Sigma_u^{(3)}\cup \Sigma_u^{(4)}$ and 
$\Omega_u:=\Omega\setminus\Sigma_u$. 

For the details of the estimation of the Hausdorff measures of the sets $\Sigma_u^{(i)}$'s, 
$i\in\{1,2\}$, we refer to the discussion in Theorem~\ref{t:partial autonomous}. Here we simply 
recall that by taking into account that $V_\mu(e(u))\in W^{1,q}_{loc}(\Omega;\R^{n\times n}_\sym)$
for some $q>2$ (cf. Proposition~\ref{p:HI}), we get $\dim_\calH(\Sigma_u^{(1)}\cup \Sigma_u^{(2)})
\leq (n-q)\vee 0$.
Instead, for what concerns $\Sigma_u^{(i)}$'s, $i\in\{3,4\}$, as
$u\in W^{2,p\wedge 2}(\Omega;\R^n)$ (see Propositions~\ref{p:regVmusuper} and \ref{p:regVmusub}), by Sobolev embedding 
$u\in W^{1,p^\ast\wedge 2^\ast}(\Omega;\R^n)$, and then we deduce that 
$\dim_\calH(\Sigma_u^{(3)}\cup\Sigma_u^{(4)})\leq 
\big(n-(p^\ast\wedge 2^\ast)\big)\vee 0$ 
(cf. \cite[Theorem~3.22]{Giusti}). In conclusion, the inequality
$\dim_{\mathcal{H}}(\Omega\setminus\Omega_u)\leq (n-\widetilde{q})\vee 0$ follows.

Next, we claim that the set $\Omega_u$ is open and that 
$u\in C^{1,\beta}_\loc(\Omega_u;\R^n)$ for all $\beta\in(0,\sfrac 12)$.
Let $x_0\in\Omega_u$, then we may find an infinitesimal sequence of radii 
$r_i$ and $M>0$ 
such that 
\begin{equation}\label{e:equivalence}
\limsup_{i\uparrow \infty}\Big(|(\nabla u)_{B_{r_i}(x_0)}|\vee %(|u|^p)_{B_{r_i}(x_0)}\vee(|V_\mu(e(u))|^2)_{B_{r_i}(x_0)}
\Big(\fint_{B_{r_i}(x_0)}|u|^p\,dy\Big)^{\sfrac 1p}\vee
\Big(\fint_{B_{r_i}(x_0)}|V_\mu(e(u))|^2\,dy\Big)^{\sfrac12}
\Big)<M<\infty,
\end{equation}
and that
\begin{equation}\label{e:excessdecay0}
\liminf_{i\uparrow \infty}\Exc_u(x_0,r_i)=0.
\end{equation}
Given $j\in\N$, $\varepsilon$, $\rho\in(0,1)$, and setting
\begin{multline*}
\Omega_u^{j,\varepsilon,\rho}
:=\Big\{x\in\Omega:\,%\exists\, r\in(0,\rho_0):\, 
|(\nabla u)_{B_r(x)}|\vee
\Big(\fint_{B_r(x)}|u|^pdy\Big)^{\sfrac 1p}\vee
\Big(\fint_{B_r(x)}|V_\mu(e(u))|^2dy\Big)^{\sfrac12}<j,\\
\quad\Exc_u(x,r)<\varepsilon\quad
\text{for some $r\in(0,\rho\wedge\dist(x,\partial\Omega))$}\Big\},
\end{multline*}
we conclude that $x_0\in \Omega_u^{M,\varepsilon,\rho}$ for all choices of $\varepsilon$ and $\rho$ as above. 
Clearly, each $\Omega_u^{j,\varepsilon,\rho}$ is open and 
$\Omega_u\subseteq\cup_{j\in\N}\Omega_u^{j,\varepsilon(j),\rho(j)}$ for every choice of $\varepsilon(j)$, $\rho(j)\in(0,1)$.
The rest of the proof is devoted to establish that 
$\cup_{j\in\N}\Omega_u^{j,\overline{\varepsilon}(j),\overline{\rho}(j)}\subseteq\Omega_u$,
for suitable values of $\overline{\varepsilon}(j)$ and 
$\overline{\rho}(j)$ to be defined in 
what follows, and the claimed regularity for $u$ on $\Omega_u$.

To this aim let $x_0\in\Omega_u^{M,\varepsilon,\rho}$,
for some $M\in\N$, $\varepsilon$, $\rho\in(0,1)$, 
and $r\in(0,\rho\wedge\dist(x,\partial\Omega))$ be a radius corresponding 
to $x_0$ in the definition of $\Omega_u^{M,\varepsilon,\rho}$, i.e.~such that 
\begin{equation}\label{e:supestimate2}
|(\nabla u)_{B_r(x_0)}|\vee
\Big(\fint_{B_r(x_0)}|u|^pdy\Big)^{\sfrac 1p}\vee
\Big(\fint_{B_r(x_0)}|V_\mu(e(u))|^2dy\Big)^{\sfrac12}<M,\quad
\Exc_u(x_0,r)<\varepsilon.
\end{equation} 
Consider the minimizer $w$ of 
$\Fz(\cdot,B_r(x_0))$ on $u+W^{1,p}_0(B_r(x_0);\R^n)$. 
Since $(e(w))_{B_r(x_0)}=(e(u))_{B_r(x_0)}$, we get that
\begin{equation}\label{e:medieew}
|(e(w))_{B_r(x_0)}|=|(e(u))_{B_r(x_0)}|<
|(\nabla u)_{B_r(x_0)}|<M.
 \end{equation}
Moreover, from the proof of Theorem~\ref{t:partial autonomous} 
we know that there exists $\eta(M)>0$ for which if 
 \begin{equation}\label{e:excessdecay2}
  \Exc_w(x_0,r)<\eta(M)\,,
 \end{equation}
  then 
$V_\mu(e(w))\in C^{0,\alpha}(B_r(x_0);\R^{n\times n}_\sym)$ 
for every $\alpha\in(0,1)$, with 
 \begin{equation}\label{e:excessdecay3}
 \Exc_w(x_0,\rho)\leq c_1\Big(\frac{\rho}{r}\Big)^{2\alpha} \Exc_w(x_0,r)
 \end{equation}
for every $\rho\in(0,r)$, with $c_1=c_1(p,\alpha,M)>0$.
Denoting by $c_0=c_0(n,\mu,p,M)>0$ the constant in \eqref{e:excessdecay5} we first choose 
$\varepsilon<\frac{1}{c_0}(\eta(M)\wedge \eta(M+1))$.

Let us first check that for any $\alpha\in(0,1)$ there exist 
a constant $c=c(n,p,\alpha,\mu,M,\|g\|_{L^\infty(\Omega;\R^n)})>0$,
and a radius $\rho_0=\rho_0(n,p)\in(0,1)$ 
satisfying the following: if $r\in(0,\rho_0\wedge\textrm{dist}(x_0,\partial\Omega)))$ we have for all $\tau>0$
\begin{equation}\label{e:decay superquadratic}
 \Exc_u(x_0, \tau r) 
\leq c\,\tau^{2\alpha}\Exc_u(x_0,r)+c\tau^{-n}r
\end{equation}
provided that $\varepsilon<\varepsilon_0=\varepsilon_0(p,\mu,\tau,M)\leq\frac1{c_0}(\eta(M)\wedge \eta(M+1))$ 
(actually $\varepsilon_0:=\frac1{c_0}(\eta(M)\wedge \eta(M+1))$ for $p\geq2$).
Note that from \eqref{e:supestimate2} and from the choice 
$\varepsilon<\varepsilon_0$, inequalities \eqref{e:excessdecay2} and \eqref{e:excessdecay3} hold.

We divide the proof in different steps for ease of readability.
We shall always distinguish the case $p\geq 2$ from $p\in(1,2)$. 
\medskip

\noindent{\bf Step 1.} Proof of \eqref{e:decay superquadratic}
for $p\geq 2$.

If $p\geq 2$, by item (iii) in Lemma~\ref{l:Vmu} we obtain
\begin{align*}
 |V_\mu\big(e(u)-(e(u))_{B_{\tau r}(x_0)}\big)|\le &
 c|V_\mu\big(e(w)-(e(w))_{B_{\tau r}(x_0)}\big)|\\
 &+c |V_\mu(e(u)-e(w))|+c |V_\mu\big((e(u))_{B_{\tau r}(x_0)}-(e(w))_{B_{\tau r}(x_0)}\big)|
\end{align*}
for some $c=c(p)>0$. Thus, by items (i) and (v) in Lemma~\ref{l:Vmu} 
we infer 
\begin{align}\label{e:decay u}
 \Exc_u(x_0,\tau r) &\leq c\, \Exc_{w}(x_0,\tau r)
 +c\,\fint_{B_{\tau r}(x_0)}|V_\mu(e(u-w))|^2dx\notag\\
 &\stackrel{\eqref{e:excessdecay3}}{\leq} c\,c_1\,\tau^{2\alpha}
 \Exc_{w}(x_0,r)
 +c\,\fint_{B_{\tau r}(x_0)}|V_\mu(e(u-w))|^2dx
 %\fint_{B_\rho(x_0)}|V_\mu(e(u))-V_\mu(e(w))|^2dx,
\notag\\
 &\stackrel{\eqref{e:excessdecay5}}{\leq} c_2\,\tau^{2\alpha}\Exc_u(x_0,r)
 +c_2\,\tau^{-n}
 \fint_{B_r(x_0)}|V_\mu(e(u-w))|^2dx\,,
 %\notag\\
 %&\leq c_2\,\tau^{2\alpha}\Exc_u(x_0,r)
 %+c_2\,\tau^{-n}\fint_{B_r(x_0)}|V_\mu(e(u))-V_\mu(e(w))|^2dx\,,
 \end{align}
with {$c_2=c_2(n,p,\mu,M)>0$}. To estimate the last term we use \eqref{e:energy comparison2} 
and the local minimality of $u$ for $\FF$ to find for some $c_3=c_3(n,p)>0$ that 
\begin{align}\label{e:decay u1}
%\int_{B_r(x_0)}&|V_\mu(e(u-w))|^2dx\leq c_3
\int_{B_r(x_0)}&|V_\mu(e(u))-V_\mu(e(w))|^2dx 
\leq c_3\big(\Fz(u,B_r(x_0))-\Fz(w,B_r(x_0))\big)\notag\\
&= c_3\big(\FF(u,B_r(x_0))-\FF(w,B_r(x_0))\big)
+c_3\int_{B_r(x_0)}\big(|w-g|^p-|u-g|^p\big)dx\notag\\
&\leq c_3\int_{B_r(x_0)}\big(|w-g|^p-|u-g|^p\big)dx.
\end{align}
In view of the elementary inequality 
\begin{equation}\label{e:elementary inequality}
\big||z_1|^p-|z_2|^p\big|\leq p(|z_1|^{p-1}+|z_2|^{p-1})|z_1-z_2| 
\end{equation}
for all $z_i\in\R^n$, together with H\"older's, %, Poincar\'e's 
Korn's and Young's inequalities, we may proceed as follows 
%We fix $\lambda\in[0,1)$ to be chosen appropriately below, then 
(in all the $L^p(B_\rho(x_0);\R^k)$ norms in the ensuing formula $k\in\{n,n\times n\}$, 
for the sake of notational simplicity we write only $L^p$):
\begin{align}\label{e:decay u2}
\int_{B_r(x_0)}&\big(|w-g|^p-|u-g|^p\big)dx\notag\\
&\leq
{c(p)}\int_{B_r(x_0)}\big(|w|^{p-1}+|u|^{p-1}+|g|^{p-1}\big)|u-w|dx\notag\\
 &\leq {c(p)}\big(\|w-u\|_{L^p}^{p-1}+\|u\|_{L^p}^{p-1}
 +\|g\|_{L^p}^{p-1}\big) \|u-w\|_{L^p}\notag\\ %Holder
& \leq {c(p)}\, c_{\textrm{Korn}}\,r
\big(\|u\|_{L^p}^{p-1}+\|g\|_{L^p}^{p-1}\big)
 \|e(u-w)\|_{L^p}
 {+c(p) c_{\textrm{Korn}}r^p
 \|e(u-w)\|_{L^p}^p}
 \notag\\ %Korn
& \leq {c(p)\, c_{\textrm{Korn}}\,
\Big(r^{n+1}
\big(\fint_{B_r(x_0)}|u|^pdy+\|g\|_{L^\infty}^{p}\big)
 +%c(p)\, c_{\textrm{Korn}}
 \,r \|e(u-w)\|_{L^p}^p
 +%c(p) c_{\textrm{Korn}}
 r^p
 \|e(u-w)\|_{L^p}^p}\Big)
 \notag\\
& \leq {c_4\, c_{\textrm{Korn}} \big(r^{n+1}
M^p +r\|e(u-w)\|_{L^p}^p}\big)
\end{align}
where $c_4=c_4(p)>0$, and we assumed without loss of generality that $M\ge \|g\|_{L^\infty}^p$ (recall that $r<1$).
Here $c_{\textrm{Korn}}=c_{\textrm{Korn}}(n,p)>0$  is the best constant in the first Korn's inequality 
on the unit ball. Then from \eqref{e:decay u1} and \eqref{e:decay u2} we find 
\begin{equation}\label{e:stima banana3}
\int_{B_r(x_0)}|V_\mu(e(u))-V_\mu(e(w))|^2dx
\leq c_3{c_4\, c_{\textrm{Korn}} \big(r^{n+1}
M^p +r \|e(u-w)\|_{L^p}^p}\big)\,.
\end{equation}
Next, recalling that $p\geq 2$, by item (iv) in Lemma~\ref{l:Vmu} we have
\begin{equation}\label{e:stima banana2}
\int_{B_r(x_0)}|e(u-w)|^pdx\leq 
\int_{B_r(x_0)}|V_\mu\big(e(u)-e(w)\big)|^2dx\,,
\end{equation}
and moreover by item (i) in the same Lemma~\ref{l:Vmu} 
\[
\int_{B_r(x_0)}|V_\mu\big(e(u)-e(w)\big)|^2dx\leq
c_5\int_{B_r(x_0)}|V_\mu(e(u))-V_\mu(e(w))|^2dx
\]
for some constant $c_5=c_5(p)>0$. Hence, from the latter inequality,
\eqref{e:stima banana3} and \eqref{e:stima banana2},
{if $r\leq \rho_0\leq (2c_3c_4c_5c_{\textrm{Korn}})^{-1}$}, we find
\[
\fint_{B_r(x_0)}|V_\mu\big(e(u)-e(w)\big)|^2dx
\leq \frac2{\omega_n}c_3c_4c_5c_{\textrm{Korn}}M^pr\,. 
\]
In turn, from this and \eqref{e:decay u} we get
\begin{equation}\label{base}
 \Exc_u(x_0,\tau r) 
\leq c_2\,\tau^{2\alpha}\Exc_u(x_0,r)+
\frac2{\omega_n}c_2c_3c_4c_5c_{\textrm{Korn}}
\tau^{-n}M^pr,
\end{equation}
for every $\tau\in(0,1)$, provided $\varepsilon<\frac 1{c_0}(\eta(M)\wedge\eta(M+1))$. 
Inequality \eqref{e:decay superquadratic} then follows at once.
\medskip

\noindent{\bf Step 2.} Proof of \eqref{e:decay superquadratic} for $p\in(1,2)$.
 
First, we have for some constant $c=c(p)$
(cf. \eqref{e:mean eu bis})
\begin{align}\label{e:mean nonautsub}
|(e(u))_{B_{\tau r}(x_0)}|\leq|(e(u))_{B_r(x_0)}|
 +c\tau^{-n} \big(\mu^{{\sfrac p2}}+ |(e(u))_{B_r(x_0)}|^p+\Exc_u(x_0,r)\big)^{\frac1p-\frac 12}
 \big(\Exc_u(x_0,r)\big)^{\frac 12}.
\end{align}
Thus if $\varepsilon<\varepsilon_0:=1\wedge \frac1{c^2}\tau^{2n}(\mu^{\sfrac p2}+M^p+1)^{1-\sfrac2p}\wedge\frac 1{c_0}(\eta(M)\wedge\eta(M+1))$ 
we conclude that 
\[
|(e(u))_{B_{\tau r}(x_0)}| <M+1.
\]
Hence, we may use item (ii) in Lemma~\ref{l:Vmu} to get 
for some constant $c_6=c_6(p,M)>0$
\[
\Exc_u(x_0,{\tau r}) \leq 
c_6\fint_{B_{\tau r}(x_0)}|V_\mu(e(u))-V_\mu((e(u))_{B_{\tau r}(x_0)})|^2dx.
\]
Thus, by item (ii) in Lemma~\ref{l:Vmu} and by Lemma~\ref{l:DieningKaplicky}, denoting by $c_7=c_7(n,p,\mu)>0$ 
the constant there, we infer (recall that since $\varepsilon<\frac1{c_0}\eta(M)$ inequalities \eqref{e:excessdecay2} 
and \eqref{e:excessdecay3} hold true)
\begin{align}\label{e:decay u sub}
 \Exc_u&(x_0,{\tau r}) \leq c_6c_7\fint_{B_{\tau r}(x_0)}|V_\mu(e(u))-\big(V_\mu(e(u))\big)_{B_{\tau r}(x_0)}|^2dx\notag\\
& \leq 3c_6c_7\fint_{B_{\tau r}(x_0)}|V_\mu(e(w))-\big(V_\mu(e(w))\big)_{B_{\tau r}(x_0)}|^2dx
+6c_6c_7\fint_{B_{\tau r}(x_0)}|V_\mu(e(u))-V_\mu(e(w))|^2dx\notag\\
& \leq  3c_6c_7\fint_{B_{\tau r}(x_0)}|V_\mu(e(w))-V_\mu((e(w))_{B_{\tau r}(x_0)}))|^2dx
+6c_6c_7\fint_{B_{\tau r}(x_0)}|V_\mu(e(u))-V_\mu(e(w))|^2dx\notag\\
& \leq  3c(p)c_6c_7\fint_{B_{\tau r}(x_0)}|V_\mu\big(e(w)-(e(w))_{B_{\tau r}(x_0)}\big)|^2dx
+6c_6c_7\fint_{B_{\tau r}(x_0)}|V_\mu(e(u))-V_\mu(e(w))|^2dx\notag\\
& =  3c(p)c_6c_7\Exc_{w}(x_0,{\tau r})+6c_6c_7\fint_{B_{\tau r}(x_0)}|V_\mu(e(u))-V_\mu(e(w))|^2dx\notag\\
&\stackrel{\eqref{e:excessdecay3}}{\leq}  3c(p)c_6c_7c_1\,\tau^{2\alpha}\Exc_{w}(x_0,r)
+6c_6c_7\fint_{B_{\tau r}(x_0)}|V_\mu(e(u))-V_\mu(e(w))|^2dx\notag\\
 &\stackrel{\eqref{e:excessdecay5}}{\leq} c_8\,\tau^{2\alpha}\Exc_u(x_0,r)
+c_8\tau^{-n}\fint_{B_r(x_0)}|V_\mu(e(u))-V_\mu(e(w))|^2dx\,,
 \end{align}
with $c_8=c_8(n,p,\mu,M)>0$.
The last term is bounded arguing exactly as in the superquadratic case: from \eqref{e:decay u1} and
\eqref{e:decay u2} we get \eqref{e:stima banana3} (recalling that $\|g\|_{L^\infty}<M$), i.e.,
\begin{equation}\label{e:stima banana3 sub}
\int_{B_r(x_0)}|V_\mu(e(u))-V_\mu(e(w))|^2dx
\leq c_3c_4c_{\mathrm{Korn}}\big(r^{1+n}M^p+r\|e(u-w)\|_{L^p}^p\big)\,.
\end{equation}
Next, Lemma~\ref{l:tecnico}, H\"older's and Young's inequalities imply for all $p\in(1,2)$
\begin{multline*}
\int_{B_r(x_0)}|e(u-w)|^pdx\\
\leq 
\int_{B_r(x_0)}|V_\mu(e(u))-V_\mu(e(w))|^2dx
+c_9\int_{B_r(x_0)}\big(\mu+|e(u)|^2+|e(w)|^2\big)^{\sfrac p2}dx\,,
\end{multline*}
where $c_9=c_9(p)>0$.
Hence from the latter inequality and \eqref{e:stima banana3 sub} we find for $r\leq \rho_0(n,p):=(2c_3c_4c_{\mathrm{Korn}})^{-1}$
\begin{align}\label{e:stima banana2 sub}
\int_{B_r(x_0)}&|V_\mu\big(e(u)\big)-V_\mu\big(e(w)\big)|^2dx
\notag\\
&\leq 2c_3c_4c_{\mathrm{Korn}}\Big(r^{1+n}M^p 
+c_9r\fint_{B_r(x_0)}\big(\mu+|e(u)|^2+|e(w)|^2\big)^{\sfrac p2}dx\Big)\,.
\end{align}
Being $u$ admissible to test the minimality of $w$, by \eqref{e:growth condition} 
we have for some $c_{10}=c_{10}(\mu)>0$ 
 \begin{equation*}%\label{e:excessdecay4}
  c_{10}^{-1}\int_{B_r(x_0)}(|e(w)|^p-1)dx\leq
  \Fz(w,B_r(x_0))\leq\Fz(u,B_r(x_0))
  \leq c_{10}\int_{B_r(x_0)}(|e(u)|^p+1)dx\,.
 \end{equation*}
Since, if $p\in(1,2)$, item (iv) in Lemma~\ref{l:Vmu} yields for some $c_{11}=c_{11}(p)> 0$
\begin{equation*}%\label{e:stima banana2}
\int_{B_r(x_0)}|e(u)|^pdx\leq 
\int_{B_r(x_0)}|V_\mu(e(u))|^2dx+c_{11}\mu^{\sfrac p2}r^n
\end{equation*}
we infer for some $c_{12}=c_{12}(n,p,\mu)>0$
\[
\int_{B_r(x_0)}\big(\mu+|e(u)|^2+|e(w)|^2\big)^{\sfrac p2}dx\leq c_{12}
\Big(r^n+\int_{B_r(x_0)}|V_\mu(e(u))|^2dx\Big)\leq 
c_{12}r^n\big(1+M^2\big)\,.
\]
From this, \eqref{e:decay u sub} and \eqref{e:stima banana2 sub} we get
\[
\Exc_u(x_0,\tau r)\leq c_8\,\tau^{2\alpha}\Exc_u(x_0,r)+
\frac2{\omega_n}c_3c_4c_8c_{\mathrm{Korn}} \tau^{-n}
(M^p+c_9c_{12}(1+M^2))r
\]
provided $r<\rho_0\wedge 1\wedge\dist(x_0,\partial\Omega)$
with $\rho_0(n,p):=(2c_3c_4c_{\mathrm{Korn}})^{-1}$. 
 Inequality \eqref{e:decay superquadratic}
then follows at once.
\bigskip

\noindent Having established \eqref{e:decay superquadratic} for every $p\in(1,\infty)$, 
we proceed as follows. Fix $\alpha>\sfrac 12$, and let 
$0<\delta<\sfrac 12<\alpha$. Choose $\overline{\tau}=\overline{\tau}(\overline{c},\alpha)\in(0,1)$ 
such that $\overline{c}\,\overline{\tau}^{2\alpha-1}\leq 1$, where $\overline{c}$ denotes 
the maximum of the constants in \eqref{e:decay superquadratic} for the bounds $M$ and $M+1$ 
on the means. Thus, we have for all $\tau\in(0,\overline{\tau})$
\begin{equation}\label{e:decay superquadratic bis}
 \Exc_u(x_0, \tau r) 
\leq \tau\Exc_u(x_0,r)+\overline{c}\tau^{-n}r.
\end{equation}
We show next by induction that, with $\overline{\tau}$ as above, it is in fact possible to 
choose, in order, $\overline{\varepsilon}(M)$ and $\overline{\rho}(M)$ (here we highlight only 
the $M$ dependence, for more details see Steps 3 and 4) such that for every $j\in\N$ we have 
\begin{equation}\label{e:mediej}
|(\nabla u)_{B_{\tau^j r}(x_0)}|
\vee\Big(\fint_{B_{\tau^j r}(x_0)}|u|^pdy\Big)^{\sfrac 1p}
\vee\Big(\fint_{B_{\tau^j r}(x_0)}|V_\mu(e(u))|^2dy\Big)^{\sfrac 12}
<M+1,
\end{equation}
and
\begin{equation}\label{e:eccj}
\Exc_u(x_0,\tau^j r)<\tau^j\Exc_u(x_0,r)+\overline{c}\tau^{-n}(\tau^{j-1}r)^{2\delta}\sum_{k=0}^{j-1}\tau^{(1-2\delta) k},
\end{equation}
provided that $\varepsilon\leq\overline{\varepsilon}$, $\tau\leq\overline{\tau}$ and $r<\rho\leq\overline{\rho}$.

Given the the latter inequalities for granted we conclude the proof. Indeed, by \eqref{e:mediej} and \eqref{e:eccj} 
it follows that $x_0\in\Omega_u$, so that $\cup_{j\in\N}\Omega_u^{j,\overline{\varepsilon}(j),\overline{\rho}(j)}\subseteq\Omega_u$. 
Moreover, items (iii) and (v) in Lemma~\ref{l:Vmu}, %(denoting by $c(p)$ the constant appearing there in the former), 
\eqref{e:eccj} and an elementary argument yield that 
\[
\Exc_u(x_0,t)\leq%\frac{4c^2(p)}
\frac{c(p)}{\tau^n}\Big(
\frac{\Exc_u(x_0,r)}{\tau\,r}\,t+
\frac{\overline{c}}{\tau^{n+4\delta}(1-\tau^{1-2\delta})}\,t^{2\delta}\Big)
\leq c\, t^{2\delta}\,,
\]
for all $t\in(0,r)$, since $\delta<\sfrac12$ and $r<1$, with $c=c(p,\tau,r,\overline{c},\delta,\overline{\eps})>0$. 
%From this we get $\Exc_u(x_0,t)<c t^{2\delta}$.
In addition, since by continuity \eqref{e:decay superquadratic bis} 
holds for all points in a ball $B_\lambda(x_0)$ with the same constants 
if $t\in(0,r\wedge \frac12 \textrm{dist}(x_0,\partial\Omega))$, we 
deduce that $u\in C^{1,\beta}(B_\lambda(x_0);\R^n)$ for all $\beta\in(0,\sfrac12)$. 
The result is thus proved.

Hence, to conclude we are left with showing the validity of \eqref{e:mediej} and of \eqref{e:eccj}.
As before we distinguish the superquadratic from the subquadratic case.
\medskip

\noindent{{\bf Step 3.}} Proof of \eqref{e:mediej} and \eqref{e:eccj}.

Let us first prove the case $p\geq 2$. 
We start off deriving some useful estimates on the different means in \eqref{e:mediej}.
Let $j\in\N$, $j\geq 1$, then by Korn's inequality (denoting by $c_K=c_K(n,p)>0$  
the best constant in such an inequality)
\begin{align*}
|(\nabla u)_{B_{\tau^j r}(x_0)}| 
&\leq |(\nabla u)_{B_{\tau^{j-1} r}(x_0)}|
+\big(\tau^{-n}\fint_{B_{\tau^{j-1} r}(x_0)}|\nabla u-(\nabla u)_{B_{\tau^{j-1} r}(x_0)}|^pdx\big)^{\sfrac 1p}\notag\\
&\leq|(\nabla u)_{B_{\tau^{j-1} r}(x_0)}|
+\Big(c_{K}\tau^{-n}
\fint_{B_{\tau^{j-1}r}(x_0)}|e(u)-(e(u))_{B_{\tau^{j-1}r}(x_0)}|^pdx\Big)^{\sfrac 1p}\\
&\leq|(\nabla u)_{B_{\tau^{j-1} r}(x_0)}|+\big(c_{K}\tau^{-n}\Exc(x_0,\tau^{j-1}r)\big)^{\sfrac 1p}.
\end{align*}
Therefore by a simple induction argument we conclude that 
\begin{equation}\label{e:medie nablauj}
|(\nabla u)_{B_{\tau^j r}(x_0)}|\leq |(\nabla u)_{B_r(x_0)}|
+\sum_{k=0}^{j-1}\big(c_{K}\tau^{-n}\Exc(x_0,\tau^kr)\big)^{\sfrac 1p}.
\end{equation}
Analogously, by using Lemma \ref{l:Vmu} (i), we have
\begin{align*}
\Big(\fint_{B_{\tau^j r}(x_0)}&|V_\mu(e(u))|^2dx\Big)^{\sfrac12}
\notag\\
&\leq |V_\mu\big((e(u))_{B_{\tau^{j-1}r}(x_0)}\big)|+
\Big(\tau^{-n}\fint_{B_{\tau^{j-1} r}(x_0)}|V_\mu(e(u))-V_\mu\big((e(u))_{B_{\tau^{j-1}r}(x_0)}\big)|^2dx\Big)^{\sfrac12}\notag\\
&\leq |V_\mu\big((e(u))_{B_{\tau^{j-1}r}(x_0)}\big)|+\big(
c(\mu,K)\tau^{-n}\Exc_u(x_0,\tau^{j-1}r)\big)^{\sfrac12},
\end{align*}
provided that $|(e(u))_{B_{\tau^{j-1}r}(x_0)}|\leq K$.
Therefore, using  Lemma \ref{l:Vmu} (v) by induction
\begin{align}\label{e:medie Vj}
\Big(\fint_{B_{\tau^j r}(x_0)}&|V_\mu(e(u))|^2dx\Big)^{\sfrac 12}
\leq |V_\mu\big((e(u))_{B_{r}(x_0)}\big)|+\sum_{k=0}^{j-1}\big(c(\mu,K)\tau^{-n}\Exc_u(x_0,\tau^kr)\big)^{\sfrac12},
\end{align}
provided that $|(e(u))_{B_{\tau^{k}r}(x_0)}|\leq K$ for 
all $0\leq k\leq j-1$.
Moreover, by Poincar\'e's and by Korn's inequalities
we obtain for a constant $c_{KP}=c_{KP}(n,p)>0$
\begin{align*}
\Big(\fint_{B_{\tau^j r}(x_0)}&|u|^pdx\Big)^{\sfrac 1p}\leq
\Big(\fint_{B_{\tau^j r}(x_0)}|u-(u)_{B_{\tau^{j-1}r}(x_0)}
-(\nabla u)_{B_{\tau^{j-1}r}(x_0)}\cdot(x-x_0)|^pdx\Big)^{\sfrac 1p}\notag\\
&+\Big(\fint_{B_{\tau^j r}(x_0)}|(u)_{B_{\tau^{j-1}r}(x_0)}
+(\nabla u)_{B_{\tau^{j-1}r}(x_0)}\cdot(x-x_0)|^pdx\Big)^{\sfrac 1p}\notag\\
&\leq\tau^{j-1}r\Big(c_{KP}\tau^{-n}\fint_{B_{\tau^{j-1}r}(x_0)}|\nabla u-(\nabla u)_{B_{\tau^{j-1}r}(x_0)}|^pdx\Big)^{\sfrac 1p}
+|(u)_{{\tau^{j-1}r}(x_0)}|
+\tau^j r|(\nabla u)_{B_{\tau^{j-1}r}(x_0)}|\notag\\
&\leq \tau^{j-1}r\Big(c_{KP}^{2}\tau^{-n}
\fint_{B_{\tau^{j-1}r}(x_0)}|e(u)-(e(u))_{B_{\tau^{j-1}r}(x_0)}|^pdx\Big)^{\sfrac 1p}\\
&+\Big(\fint_{B_{\tau^{j-1}r}(x_0)}|u|^pdx\Big)^{\sfrac 1p}
+\tau^j r|(\nabla u)_{B_{\tau^{j-1}r}(x_0)}|\notag\\
&\leq\tau^{j-1}r(c_{KP}^{2}\tau^{-n}\Exc_u(x_0,\tau^{j-1}r))^{\sfrac 1p}+\Big(\fint_{B_{\tau^{j-1}r}(x_0)}|u|^pdx\Big)^{\sfrac 1p}
+\tau^j r|(\nabla u)_{B_{\tau^{j-1}r}(x_0)}|.
\end{align*}
Hence, by induction we conclude that 
\begin{align}\label{e:medie uj}
\Big(\fint_{B_{\tau^j r}(x_0)}|u|^pdx\Big)^{\sfrac 1p}&\leq\Big(\fint_{B_r(x_0)}|u|^pdx\Big)^{\sfrac 1p}
\notag\\
&+r\sum_{k=0}^{j-1}\tau^k\big(c_{KP}^{2}\tau^{-n}\Exc_u(x_0,\tau^kr)\big)^{\sfrac 1p}+r
\sum_{k=0}^{j-1}\tau^{k+1}|(\nabla u)_{B_{\tau^kr}(x_0)}|.
\end{align}
Let us then check the basic induction step $j=1$ for \eqref{e:mediej}. 
Indeed, note that for \eqref{e:eccj} it has been established in Step~2 
(see \eqref{e:decay superquadratic} and \eqref{e:decay superquadratic bis}). 
From \eqref{e:medie nablauj} we find
\begin{align*}%\label{e:bdd nablau0}
|(\nabla u)_{B_{\tau r}(x_0)}| \leq |(\nabla u)_{B_r(x_0)}|+
(c_K\tau^{-n}\Exc_u(x_0,r))^{\sfrac 1p}\leq M+1
\end{align*}
provided that $\varepsilon<c_K^{-1}\tau^{n}$. 
Moreover, from \eqref{e:medie Vj} we have
\begin{align*}%\label{e:bound V0}
\Big(\fint_{B_{\tau r}(x_0)}|V_\mu(e(u))|^2dx\Big)^{\sfrac12}
\leq |V_\mu\big((e(u))_{B_{r}(x_0)}\big)|
+\big(c(\mu,M)\tau^{-n}\Exc_u(x_0,r)\big)^{\sfrac 12}<M+1,
\end{align*}
provided that $\varepsilon<c^{-1}(\mu,M)\tau^{n}$.
In addition, from \eqref{e:medie uj}
\begin{align*}%\label{e:bound u0}
\Big(\fint_{B_{\tau r}(x_0)}|u|^pdx\Big)^{\sfrac 1p}&\leq
\big(c_{KP}^{2}\tau^{-n}\Exc_u(x_0,r)\big)^{\sfrac 1p}
+\Big(\fint_{B_r(x_0)}|u|^pdx\Big)^{\sfrac 1p}
+\tau r|(\nabla u)_{B_r(x_0)}|\notag\\
&\leq \big(c_{KP}^{2}\tau^{-n}\Exc_u(x_0,r)\big)^{\sfrac 1p}+M+\tau rM<M+1,
\end{align*}
by choosing  $\varepsilon<2^{-p}c_{KP}^2\tau^n$ and $r< (2M)^{-1}$.
In conclusion, \eqref{e:mediej} is established for $j=1$ and $\tau<\overline{\tau}(M,\alpha)$, if
$\varepsilon<\varepsilon_1:=%\frac1{c_0}(\eta(M)\wedge\eta(M+1))
\varepsilon_0\wedge c_K^{-1}\tau^{n}\wedge c^{-1}(\mu,M)\tau^{n} \wedge 2^{-p}c_{KP}^{-2}\tau^n$ and $\rho<\rho_1:=\rho_0\wedge(2M)^{-1}$ 
($\varepsilon_0$ and $\rho_0$ have been defined in Step 1).

Let now $j\in \N$, $j\geq 2$, and assume by induction that \eqref{e:mediej} and \eqref{e:eccj} 
hold for all $0\leq k\leq j-1$. Then for such values of $k$ we have
\begin{equation}\label{e:ecck}
\Exc_u(x_0,\tau^k r)<\tau^k\Exc_u(x_0,r)+\frac{\overline{c}\tau^{-n}}{1-\tau^{1-2\delta}}(\tau^{k-1}r)^{2\delta}
\end{equation}
and then %for all $\beta\in(0,1]$
\begin{equation}\label{e:ecck sum}
\sum_{k=0}^{j-1}\big(\Exc_u(x_0,\tau^k r)\big)^{\sfrac 1p}<\frac{(\Exc_u(x_0,r))^{\sfrac1p}}{1-\tau^{\sfrac 1p}}
+\Big(\frac{\overline{c}\tau^{-n}}{1-\tau^{1-2\delta}}\Big)^{{\sfrac 1p}}\frac{(\tau^{-1}r)^{\sfrac{2\delta}p}}{1-\tau^{\sfrac{2\delta}p}}.
\end{equation}
Hence, having fixed $\tau\in(0,\overline{\tau}]$, we may choose $\varepsilon_2=\varepsilon_2(\varepsilon_1,p,\tau)<\varepsilon_1$ 
and $\rho_2=\rho_2(\varepsilon_1,\overline{c},p,\delta)<\rho_1$ such that
if $\widetilde{C}:=c_K\vee c(\mu,M)\vee c^2_{KP}\vee 1$ and 
$\rho<\rho_2$, $\varepsilon<\varepsilon_2$ we find
\begin{equation}\label{e:ecck sum bound}
(\widetilde{C}\tau^{-n})^{{\sfrac 1p}}\sum_{k=0}^{j-1}\big(\Exc_u(x_0,\tau^k r)\big)^{{\sfrac 1p}}<\varepsilon_1.
\end{equation}
In particular, the inductive hypothesis on \eqref{e:mediej}, \eqref{e:medie nablauj} and \eqref{e:ecck sum bound} 
%for $\beta=\sfrac1p$ 
yield
\begin{align}\label{e:medie nablauj 2}
|(\nabla u)_{B_{\tau^j r}(x_0)}|\leq
M+(c_K\tau^{-n})^{\sfrac 1p}\sum_{k=0}^{j-1}\big(\Exc_u(x_0,\tau^k r)\big)^{\sfrac 1p}<M+1.
\end{align}
In turn, by the inductive assumption $|(e(u))_{B_{\tau^{k}r}(x_0)}|\leq M+1$ for all $0\leq k\leq j-1$, 
so that thanks to \eqref{e:medie Vj} and \eqref{e:ecck sum bound}, as $\sfrac 1p\wedge\sfrac 12=\sfrac 1p$, we infer
\begin{align}\label{e:medie Vj2}
\Big(\fint_{B_{\tau^j r}(x_0)}|V_\mu(e(u))|^2dx\Big)^{\sfrac 12}
&\leq%|V_\mu\big((e(u))_{B_{r}(x_0)}\big)|\\&
M+(c(\mu,M)\tau^{-n})^{\sfrac 12}\sum_{k=0}^{j-1}\big(\Exc_u(x_0,\tau^k r)\big)^{\sfrac 12}<M+1.
\end{align}
Finally, in view of \eqref{e:medie uj} and \eqref{e:ecck sum bound} %\comment{togliere for $\beta=\sfrac1p$} 
we get
\begin{align}\label{e:medie uj 2}
\Big(\fint_{B_{\tau^j r}(x_0)}|u|^pdx\Big)^{\sfrac 1p}&\leq
M+r\,(c_{KP}^2\tau^{-n})^{\sfrac 1p}\sum_{k=0}^{j-1}\big(\Exc_u(x_0,\tau^k r)\big)^{\sfrac 1p}+\frac{rM}{1-\tau}<M
+r\big(\varepsilon_1+\frac{M}{1-\overline{\tau}}\big).
\end{align}
Thus we have concluded \eqref{e:mediej} for the index $j$ provided that $\varepsilon<\varepsilon_2$
and $\rho<\rho_2\wedge (\varepsilon_1+\frac{M}{1-\overline{\tau}})^{-1}$.

Finally, we prove \eqref{e:eccj} for the index $j$ as follows. 
From \eqref{e:ecck sum} we have $\Exc_u(x_0,\tau^{j-1}r)<\varepsilon$, so that by the inductive hypothesis on the means
it turns out that $x_0\in\Omega_u^{M+1,\varepsilon,\rho}$ with corresponding 
radius $\tau^{j-1}r$. Moreover, the choice 
$\varepsilon<\frac1{c_0}\eta(M+1)$ and the definition of 
$\overline{\tau}$ (cf. the paragraph right before \eqref{e:decay superquadratic bis}) imply that \eqref{e:decay superquadratic bis} itself hold with the radii $\tau^{j-1}r$, $\tau^jr$ in place of 
$r$, $\tau r$ respectively. 
Thus, using the inductive assumption on \eqref{e:eccj} for
$j-1$ we conclude
\begin{align*}
\Exc_u(x_0,\tau^jr)&\leq\tau\Exc_u(x_0,\tau^{j-1}r)
+\overline{c}\tau^{-n}\tau^{j-1}r\\&\leq 
\tau^{j}\Exc_u(x_0,r)+\overline{c}\tau^{-n}
(\tau^{j-1}r)^{2\delta}\sum_{k=1}^{j-1}\tau^{(1-2\delta)k}
+\overline{c}\tau^{-n}\tau^{j-1}r\\&\leq 
\tau^{j}\Exc_u(x_0,r)+\overline{c}\tau^{-n}
(\tau^{j-1}r)^{2\delta}\sum_{k=0}^{j-1}\tau^{(1-2\delta)k},
\end{align*}
since $\delta< \sfrac 12$.
\medskip

The proof of \eqref{e:mediej} and \eqref{e:eccj} in the case 
$p\in(1,2)$ is quite similar. Hence, we will highlight only the main differences. First, arguing as in \eqref{e:mean nonautsub} (cf. \eqref{e:mean eu 0}, \eqref{e:mean eu bis}) and using Korn's inequality we have for some constant $c_K=c_K(n,p)$
	\begin{align}
	|(\nabla u)_{B_{\tau^j r}(x_0)}|&\leq
	|(\nabla u)_{B_{\tau^{j-1}r}(x_0)}|
	\notag\\
	&+c_K\tau^{-n} \Big(\mu^{{\sfrac p2}}+ |(\nabla u)_{B_{\tau^{j-1}r}(x_0)}|^p+\Exc_u(x_0,\tau^{j-1}r)\Big)^{\frac1p-\frac 12}
	(\Exc_u(x_0,\tau^{j-1}r))^{\frac 12}\notag.
	\end{align}
Thus, by induction we infer that
\begin{align}\label{mediee_pmin2}
|(\nabla u)_{B_{\tau^j r}(x_0)}|&\leq
|(\nabla u)_{B_{r}(x_0)}|\notag
\\
&+c_K\tau^{-n} \sum_{k=0}^{j-1}\big(\mu^{{\sfrac p2}}+ 
|(\nabla u)_{B_{\tau^{k}r}(x_0)}|^p+\Exc_u(x_0,\tau^{k}r)\big)^{\frac1p-\frac 12}
(\Exc_u(x_0,\tau^{k}r))^{\frac 12}.
\end{align}
Analogously to the derivation of \eqref{e:medie Vj},
by Lemma \ref{l:Vmu} (v) and (ii) we find
\begin{align}\label{e:medie Vj_pmin2}
\Big(\fint_{B_{\tau^j r}(x_0)}&|V_\mu(e(u))|^2dx\Big)^{\sfrac 12}
\leq |V_\mu\big((e(u))_{B_{r}(x_0)}\big)|+
(c(\mu,M)\tau^{-n})^{\sfrac12}\sum_{k=0}^{j-1}\big(\Exc_u(x_0,\tau^kr)\big)^{\sfrac12}.
\end{align}
Again, by Poincar\'e and Korn's inequalities
we find for a constant $c_{KP}=c_{KP}(n,p)>0$ 
(cf. the derivation of \eqref{e:medie uj} and \eqref{e:mean eu 0})
\begin{align*}
\Big(\fint_{B_{\tau^j r}(x_0)}|u|^pdx\Big)^{\sfrac 1p}
\leq&\tau^{j-1}r
\Big(c_{KP}^{2}\tau^{-n}\fint_{B_{\tau^{j-1}r}(x_0)}|e(u)-(e(u))_{B_{\tau^{j-1}r}(x_0)}|^pdx\Big)^{\sfrac 1p}\\
&+\Big(\fint_{B_{\tau^{j-1}r}(x_0)}|u|^pdx\Big)^{\sfrac 1p}
+\tau^j r|(\nabla u)_{B_{\tau^{j-1}r}(x_0)}|\notag\\
\leq&\tau^{j-1}r\,(c_{KP}^{2}\tau^{-n})^{\sfrac 1p}
\big(\mu^{{\sfrac p2}}+|(e(u))_{B_{\tau^{j-1}r}(x_0)}|^p+\Exc_u(x_0,\tau^{j-1}r)\big)^{\frac1p-\frac 12}
(\Exc_u(x_0,\tau^{j-1}r))^{\frac 12}\notag\\
&+\Big(\fint_{B_{\tau^{j-1}r}(x_0)}|u|^pdx\Big)^{\sfrac 1p}
+\tau^j r|(\nabla u)_{B_{\tau^{j-1}r}(x_0)}|.
\end{align*}
Therefore, by induction we conclude that 
\begin{align}\label{e:uj_pmin2}
&\Big(\fint_{B_{\tau^j r}(x_0)}|u|^pdx\Big)^{\sfrac 1p}\leq\Big(\fint_{B_r(x_0)}|u|^pdx\Big)^{\sfrac 1p}+r
\sum_{k=0}^{j-1}\tau^{k+1}|(\nabla u)_{B_{\tau^kr}(x_0)}|
\notag\\
&+r\,(c_{KP}^{2}\tau^{-n})^{\sfrac 1p}\sum_{k=0}^{j-1}\tau^{k}
\big(\mu^{{\sfrac p2}}+|(e(u))_{B_{\tau^{k}r}(x_0)}|^p+\Exc_u(x_0,\tau^{k}r)\big)^{\frac1p-\frac 12}
(\Exc_u(x_0,\tau^{k}r))^{\frac 12}.
\end{align}
From \eqref{mediee_pmin2}-\eqref{e:uj_pmin2} we easily deduce the basic induction step for \eqref{e:mediej}, provided that we choose 
$\varepsilon<\varepsilon_0\wedge
(c_K^{-1}\tau^{n})^2(\mu^{\sfrac p2}+M^p+1)^{1-\sfrac2p}\wedge c^{-1}(\mu,M)\tau^{n} \wedge 2^{-2}(c_{KP}^{-2}\tau^n)^{\sfrac2p}(\mu^{\sfrac p2}+M^p+1)^{1-\sfrac2p}$ and 
$\rho<\rho_0\wedge(2M)^{-1}$ ($\varepsilon_0$ and $\rho_0$ have been defined in Step 2). 
The general induction step $j\in \N$, $j\geq2$, is now completely similar to the case $p\geq2$.
\end{proof}

\appendix 

\section{Technical results}\label{a:technical}

In this section we collect several technical tools we have used to settle partial regularity 
in the autonomous case.
%We start by introducing some notation: 
{We recall that for} sequences of scalars $\lambda_h\downarrow 0$ and of 
matrices $\mathbb{A}_h\to \mathbb{A}$ {we set}
\[
\Fh(\xi):=\lambda_h^{-2}\big(f_\mu(\mathbb{A}_h+\lambda_h\xi)-f_\mu(\mathbb{A}_h)
-\lambda_h\langle\nabla f_\mu(\mathbb{A}_h),\xi\rangle\big).
\]
Let us prove some properties of $\Fh$.
\begin{lemma}\label{l:Fhsup}
Let $p\in(1,\infty)$ and $\mu>0$, then
\begin{itemize}
 \item[(i)] $\Fh\to \Finf$ in {$L^\infty_\loc(\R^{n\times n})$} as $h\uparrow\infty$, where 
 $\Finf(\xi):=\frac12\langle\nabla^2f_\mu(\mathbb{A})\xi,\xi\rangle$;
 \item[(ii)] there exists $\omega:(0,+\infty)\to(0,+\infty)$ non{-}decreasing such that 
 $\omega(t)\downarrow 0$ as $t\downarrow 0$ and for every $\xi\in\R^{n\times n}_\sym$ with $\lambda_h|\xi|\le 1$ one has
 \[
 \Fh(\xi)\geq \Finf(\xi)-{\omega(\lambda_h|\xi|+|\mathbb{A}_h-\mathbb{A}|)|\xi|^2; }
 \]
 \item[(iii)] there exists a constant {$c=c(\mu,M)>1$, with $M\geq\sup_h|\mathbb{A}_h|$, such that for} all $\xi\in\R^{n\times n}_\sym$
 \[
\frac {c^{-1}}{\lambda_h^2}|V_\mu(\lambda_h\xi)|^2\leq  \Fh(\xi)\leq \frac c{\lambda_h^2}|V_\mu(\lambda_h\xi)|^2;
 \]
 \item[(iv)] there exists a constant $c(p,\mu)>0$ such that for all $\xi,\,\eta\in\R^{n\times n}_\sym$
\begin{align*}
 \Fh(\xi)-\Fh(\eta)\geq &
  \frac c{\lambda_h^2} |V_\mu(\mathbb{A}_h+\lambda_h\xi)-V_\mu(\mathbb{A}_h+\lambda_h\eta)|^2\\&+
 \frac 1\lambda_h\langle\nabla f_\mu(\mathbb{A}_h+\lambda_h\eta)-\nabla f_\mu(\mathbb{A}_h),(\xi-\eta)\rangle;
\end{align*}
If, additionally, $\lambda_h|\eta|\le \mu$ then for some constant $c(p,\mu,M)>0$, with $M\geq\sup_h|\mathbb{A}_h|$, we have
\[
 \Fh(\xi)-\Fh(\eta)\geq \frac c{\lambda_h^2}|V_\mu(\lambda_h(\xi-\eta))|^2+
 \frac 1{\lambda_h}\langle\nabla f_\mu(\mathbb{A}_h+\lambda_h\eta)-\nabla f_\mu(\mathbb{A}_h),(\xi-\eta)\rangle.
\]
\end{itemize}
\end{lemma}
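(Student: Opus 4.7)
The plan is to base everything on the second-order Taylor expansion around $\mathbb{A}_h$ with integral remainder, namely
\begin{equation*}
F_h(\xi) = \int_0^1 (1-t)\, \langle \nabla^2 f_\mu(\mathbb{A}_h + t\lambda_h\xi)\xi,\xi\rangle\, dt,
\end{equation*}
obtained by writing the numerator of $F_h$ as $\int_0^1(1-t)\frac{d^2}{dt^2}f_\mu(\mathbb{A}_h+t\lambda_h\xi)\,dt$. Since $\mu>0$, assumption (Reg) ensures $\nabla^2 f_\mu \in C^0(\R^{n\times n}_\sym)$, hence uniformly continuous on any fixed bounded neighborhood of $\mathbb{A}$. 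For (i), on a compact set of $\xi$'s the integrand converges uniformly to $\langle \nabla^2 f_\mu(\mathbb{A})\xi,\xi\rangle$, yielding $F_h \to F_\infty$ in $L^\infty_{\loc}$. For (ii), the constraint $\lambda_h|\xi|\leq 1$ keeps the base point $\mathbb{A}_h + t\lambda_h\xi$ inside a fixed compact set, and one gets
\begin{equation*}
|F_h(\xi) - F_\infty(\xi)| \leq |\xi|^2\int_0^1(1-t)\,|\nabla^2 f_\mu(\mathbb{A}_h + t\lambda_h\xi) - \nabla^2 f_\mu(\mathbb{A})|\,dt \leq \omega(\lambda_h|\xi| + |\mathbb{A}_h - \mathbb{A}|)\,|\xi|^2
\end{equation*}
with $\omega$ a (nondecreasing) modulus of continuity of $\nabla^2 f_\mu$ on the fixed neighborhood, vanishing at $0$.

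For (iii), I would insert the two-sided estimate (Conv) into the Taylor formula and apply Lemma~\ref{l:tecnico} with $\gamma = p/2 - 1$ and $r=1$ to the pair $\mathbb{A}_h$, $\mathbb{A}_h + \lambda_h\xi$, obtaining
\begin{equation*}
F_h(\xi) \simeq (\mu + |\mathbb{A}_h|^2 + |\mathbb{A}_h + \lambda_h\xi|^2)^{p/2-1}|\xi|^2
\end{equation*}
up to constants depending only on $p$. Since $|\mathbb{A}_h|\leq M$ and $\mu>0$, one has $\mu + |\lambda_h\xi|^2 \leq \mu + |\mathbb{A}_h|^2 + |\mathbb{A}_h + \lambda_h\xi|^2 \leq c(\mu,M)\,(\mu + |\lambda_h\xi|^2)$ (with the nontrivial direction depending on the sign of $p/2-1$ but in both cases controlled by $\mu>0$ and $|\mathbb{A}_h|\leq M$); then (iii) follows from the identity $|V_\mu(\lambda_h\xi)|^2 = (\mu + |\lambda_h\xi|^2)^{p/2-1}|\lambda_h\xi|^2$.

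For (iv), the same Taylor identity applied now between the endpoints $\mathbb{A}_h + \lambda_h\eta$ and $\mathbb{A}_h + \lambda_h\xi$ gives
\begin{equation*}
F_h(\xi) - F_h(\eta) - \tfrac{1}{\lambda_h}\langle\nabla f_\mu(\mathbb{A}_h + \lambda_h\eta) - \nabla f_\mu(\mathbb{A}_h), \xi - \eta\rangle = \int_0^1 (1-t)\langle\nabla^2 f_\mu(\mathbb{A}_h + \lambda_h\eta + t\lambda_h(\xi-\eta))(\xi-\eta),(\xi-\eta)\rangle\,dt.
\end{equation*}
Combining (Conv) with Lemma~\ref{l:tecnico} (exponent $p/2-1$) bounds this from below by $c\,(\mu + |\mathbb{A}_h + \lambda_h\eta|^2 + |\mathbb{A}_h + \lambda_h\xi|^2)^{p/2-1}|\xi-\eta|^2$; applying Lemma~\ref{l:tecnico2} (with $\gamma = (p-2)/4$) to the same pair produces exactly $c\lambda_h^{-2}|V_\mu(\mathbb{A}_h + \lambda_h\xi) - V_\mu(\mathbb{A}_h + \lambda_h\eta)|^2$, which is the first claim of (iv). Under the extra assumption $\lambda_h|\eta|\leq \mu$, the base point $\mathbb{A}_h + \lambda_h\eta$ has norm bounded by $M+\mu$, so Lemma~\ref{l:Vmu}(i) for $p\geq 2$, respectively Lemma~\ref{l:Vmu}(ii) for $p\in(1,2)$, allows one to replace $|V_\mu(\cdot)-V_\mu(\cdot)|$ by $|V_\mu(\cdot-\cdot)| = |V_\mu(\lambda_h(\xi-\eta))|$, giving the second inequality of (iv).

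The only real subtlety is keeping all constants uniform in $h$. This is automatic once one observes that $\mu>0$ together with $|\mathbb{A}_h|\leq M$ confines every evaluation point of $\nabla^2 f_\mu$ and of $V_\mu$ that appears to a compact subset of $\R^{n\times n}_\sym$ on which $\nabla^2 f_\mu$ is continuous and uniformly elliptic; this is precisely why the non-degeneracy hypothesis $\mu>0$ is essential for (i)–(ii) (and hence for the linearization argument of Theorem~\ref{t:partial autonomous}).
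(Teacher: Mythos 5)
Your proof is correct and follows essentially the same path as the paper: the Taylor representation $\Fh(\xi)=\int_0^1(1-t)\langle\nabla^2 f_\mu(\mathbb{A}_h+t\lambda_h\xi)\xi,\xi\rangle\,dt$ drives (i)–(ii), the two-sided Hessian bound (Conv) together with Lemma~\ref{l:tecnico} and the crude equivalence $\mu+|\mathbb{A}_h|^2+|\mathbb{A}_h+\lambda_h\xi|^2\simeq_{\mu,M}\mu+|\lambda_h\xi|^2$ yields (iii), and the shifted Taylor identity combined with (Conv), Lemma~\ref{l:tecnico} and Lemma~\ref{l:tecnico2} gives the first part of (iv). The only cosmetic deviation is in the second inequality of (iv), where you invoke Lemma~\ref{l:Vmu}(i)/(ii) to pass from $|V_\mu(\mathbb{A}_h+\lambda_h\xi)-V_\mu(\mathbb{A}_h+\lambda_h\eta)|$ to $|V_\mu(\lambda_h(\xi-\eta))|$, while the paper re-runs the estimate of (iii) with $\mathbb{A}_h+\lambda_h\eta$ replacing $\mathbb{A}_h$; both produce the same constant dependence on $(p,\mu,M)$.
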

\begin{proof}
It suffices to take into account the representation formula 
\begin{equation}\label{e:fhrepr}
 \Fh(\xi)=\int_0^1\langle\nabla^2f_\mu(\mathbb{A}_h+t\lambda_h\xi)\xi,\xi\rangle(1-t)dt
\end{equation}
to establish items (i) and (ii). 

To prove (iii) first we notice the basic inequalities: 
\begin{equation}\label{e:A}
%\frac 34\frac{\mu}{\mu+4M^2}
\frac{\mu}{2(\mu+M^2)}(\mu+|s\,\xi|^2)\leq 
\mu+|\mathbb{A}_h+s\,\xi|^2\leq 2\frac{\mu+M{^2}}{\mu}(\mu+|s\,\xi|^2),
% \end{equation}
% and
% \begin{equation}\label{e:B}
% \mu+|\mathbb{A}_h+\lambda_h\xi|^2\geq \frac 34\frac{\mu}{\mu+4M^2}(\mu+|\lambda_h\xi|^2).
\end{equation}
where $M\geq\sup_h|\mathbb{A}_h|$ and $s>0$.
Thus, from \eqref{e:bdhessf}, \eqref{e:fhrepr} and \eqref{e:A} we deduce if $p\in(2,\infty)$ 
\[
 %{\tilde{c}_1}
 \frac 1c\Big(%\frac 34\frac{\mu}{\mu+4M^2}
 \frac{\mu}{2(\mu+M^2)}\Big)^{\frac p2-1} 
 \lambda_h^{-2}|V_\mu(\lambda_h\xi)|^2 
 \leq \Fh(\xi)\leq 
 %{\tilde{c}_2}
 c\Big(2\frac{\mu+M^{2}}{\mu}\Big)^{\frac p2-1} \lambda_h^{-2}|V_\mu(\lambda_h\xi)|^2
\]
for some constant $c>0$. 
The inequality on the left hand side follows by arguing as in Lemma~\ref{l:tecnico}. 
Analogously, the case with $p\in(1,2)$ holds with opposite inequalities. Instead, if $p=2$ (iii) is trivial.

To prove (iv) a simple computation yields
 \begin{multline*}
 \Fh(\xi)-\Fh(\eta)= \int_0^1\langle\nabla^2f_\mu(\mathbb{A}_h+t\lambda_h(\xi-\eta){+\lambda_h\eta})(\xi-\eta),(\xi-\eta)\rangle(1-t)dt\\
 +\lambda_h^{-1}\langle\nabla f_\mu(\mathbb{A}_h+\lambda_h\eta)-\nabla f_\mu(\mathbb{A}_h),(\xi-\eta)\rangle.
\end{multline*}
Therefore, the first inequality follows from \eqref{e:bdhessf} and Lemmas~\ref{l:tecnico}, \ref{l:tecnico2}.  
Instead, the second inequality follows by estimating the first term on the right hand side as for (iii).
\end{proof}

Consider $\FFh:L^{p}(B_1;\R^n)\times\mathcal{A}(B_1)\to[0,+\infty]$ defined by
\begin{equation}\label{e:Fh}
 \FFh(u,A)=\int_{A}\Fh(e(u))dx
\end{equation}
if $u\in W^{1,p}(B_1;\R^n)$, and $+\infty$ otherwise. Above, $\mathcal{A}(B_1)$ is the class of all
open subsets of $B_1$. We shall drop the dependence on $B_1$ on the left hand side if $A=B_1$.
Let $u_h$ be a local minimizer of $\FFh$, that is $\FFh(u_h)=\inf_{u_h+W^{1,p}_0(B_1;\R^n)}\FFh$,
and moreover assume that 
\begin{equation}\label{e:Fhuh}
 \fint_{B_1} u_hdx=0,\,\fint_{B_1}\nabla u_hdx=0,\textrm{ and }\quad
 \sup_h\int_{B_1}\lambda_h^{-2}|V_\mu(\lambda_h e(u_h))|^2dx<+\infty.
\end{equation}
In view of \eqref{e:Fhuh} and item (v) in  Lemma~\ref{l:Vmu}, 
it follows from Korn's inequality for $N$-functions in \cite[Lemma~2.9]{DieningKaplicky} applied to $|V_\mu(\cdot)|^2$ 
(cf. item (v) in Lemma \ref{l:Vmu}) that 
\begin{equation}\label{e:Fhuh2}
 \sup_h\int_{B_1}\lambda_h^{-2}|V_\mu(\lambda_h \nabla u_h)|^2dx<+\infty.
\end{equation}
Moreover, by item (v) in Lemma~\ref{l:Vmu} an application of Poincar\'e's inequality 
for $N$-functions (\cite[Theorem~6.5]{DieningRuzickaSchumacher10}) yields
\begin{equation}\label{e:Fhuh3}
 \sup_h\int_{B_1}\lambda_h^{-2}|V_\mu(\lambda_h u_h)|^2dx<+\infty.
\end{equation}
{where with abuse of notation we define $V_\mu:\R^n\to\R^n$ by the same formula used for matrices.}

The ensuing result is instrumental to prove that actually 
$(u_h)_h$ converges to $u_\infty$ strongly in $W^{1,p\wedge 2}_\loc(B_1;\R^n)$.

\begin{theorem}\label{t:Gammasup}
Let %$\Gamma(L^2)\mbox{-}\lim_{h\uparrow\infty}\FFh=\mathscr{F}_\infty$, where 
$\FFinf:L^2(B_1;\R^n)\times\mathcal{A}(B_1)\to[0,\infty]$ be given by 
\begin{equation}\label{e:Finftysup}
 \FFinf(u;A)=\frac12\int_{A}
 \langle\nabla^2f_\mu(\mathbb{A})e(u),e(u)\rangle\,dx
 \end{equation}
if $u\in W^{1,2}(B_1;\R^n)$ and $+\infty$ otherwise. 

If $\FFh$ are the functionals in \eqref{e:Fh} and $(u_h)_h$ is the sequence 
in \eqref{e:Fhuh}, then {after extracting a subsequence} $(u_h)_h$ converges weakly in $W^{1,p\wedge 2}(B_1;\R^n)$ 
to some function $u_\infty\in W^{1,2}(B_1;\R^n)$,
\begin{equation}\label{e:Finftysuplb}
\liminf_{h\uparrow\infty}\FFh(u_h,B_r)\geq\FFinf(u_\infty,B_r)\quad\textrm{for all $r\in(0,1]$},
 \end{equation}
and
\begin{equation}\label{e:Finftysupub}
\limsup_{h\uparrow\infty}\FFh(u_h,B_r)\leq\FFinf(u_\infty,B_r)\quad\textrm{for $\calL^1$ a.e. $r\in(0,1)$.}
\end{equation}
\end{theorem}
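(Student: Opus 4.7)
\emph{Compactness and identification of the limit.} Using $\lambda_h^{-2}|V_\mu(\lambda_h\xi)|^2=(\mu+\lambda_h^2|\xi|^2)^{p/2-1}|\xi|^2$ together with \eqref{e:Fhuh2} and Lemma~\ref{l:Vmu}(iv): for $p\ge 2$ the integrand is bounded below by $\mu^{p/2-1}|\nabla u_h|^2$, so $u_h$ is bounded in $W^{1,2}(B_1;\R^n)$; for $p\in(1,2)$, splitting $\nabla u_h$ according to whether $\lambda_h|\nabla u_h|\le\mu^{1/2}$, the first piece is bounded in $L^2$ (since $(\mu+\lambda_h^2|\nabla u_h|^2)^{p/2-1}\ge(2\mu)^{p/2-1}$ there) while the second vanishes in $L^p$ at rate $\lambda_h^{(2-p)/p}$. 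In either regime we extract a subsequence with $u_h\rightharpoonup u_\infty$ weakly in $W^{1,p\wedge 2}$. In the subquadratic case, the $L^2$-bounded piece has a weak $L^2$ limit $G$; since $\nabla u_h\rightharpoonup\nabla u_\infty$ weakly in $L^p$, uniqueness of weak limits forces $G=\nabla u_\infty\in L^2$, yielding $u_\infty\in W^{1,2}(B_1;\R^n)$.

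\emph{Liminf inequality.} Fix $r\in(0,1]$ and $w\in C^\infty(\overline B_1;\R^n)$ with $|e(w)|\le M$; for $h$ so large that $\lambda_h M\le\mu$, Lemma~\ref{l:Fhsup}(iv) with $\xi=e(u_h)$ and $\eta=e(w)$ yields pointwise
\begin{equation*}
F_h(e(u_h))\ge F_h(e(w))+\frac{1}{\lambda_h}\langle\nabla f_\mu(\mathbb{A}_h+\lambda_h e(w))-\nabla f_\mu(\mathbb{A}_h),\,e(u_h-w)\rangle+\frac{c}{\lambda_h^2}|V_\mu(\lambda_h e(u_h-w))|^2.
\end{equation*}
Integrating over $B_r$ and passing to $h\uparrow\infty$: the first term tends to $\FFinf(w,B_r)$ by Lemma~\ref{l:Fhsup}(i); the coefficient in the second converges uniformly on $B_r$ to $\nabla^2 f_\mu(\mathbb{A})e(w)$ by Taylor expansion, and combined with weak convergence $e(u_h-w)\rightharpoonup e(u_\infty-w)$ in $L^{p\wedge 2}$ the second term converges to $\int_{B_r}\langle\nabla^2 f_\mu(\mathbb{A})e(w),e(u_\infty-w)\rangle\,dx$; the third is nonnegative. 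Letting $w\to u_\infty$ in $W^{1,2}(B_1;\R^n)$ combines the first two contributions into $\FFinf(u_\infty,B_r)$, proving \eqref{e:Finftysuplb}.

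\emph{Limsup inequality.} Fix $r\in(0,1)$ and smooth $w\in C^\infty(\overline B_1;\R^n)$ approximating $u_\infty$ in both $W^{1,2}$ and $L^2$. For any $\sigma>0$ and $N\in\N$, a Fubini argument based on $\sup_h\FFh(u_h,B_1)<\infty$ produces $\rho_h\in(r-\sigma,r)$ with $\FFh(u_h,B_{\rho_h}\setminus B_{\rho_h-\sigma/N})\le C/N$. With $\chi\in C_c^\infty(B_{\rho_h})$, $\chi\equiv 1$ on $B_{\rho_h-\sigma/N}$, $|\nabla\chi|\le cN/\sigma$, define $v_h:=\chi w+(1-\chi)u_h\in u_h+W^{1,p}_0(B_r;\R^n)$; local minimality gives
\begin{equation*}
\FFh(u_h,B_{\rho_h-\sigma/N})\le\FFh(w,B_{\rho_h-\sigma/N})+\FFh(v_h,B_{\rho_h}\setminus B_{\rho_h-\sigma/N}).
\end{equation*}
The first term tends to $\FFinf(w,B_r)$. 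Convexity of $F_h$ and the $\Delta_2$-type bound $F_h(a+b)\le c(F_h(a)+F_h(b))$ (consequence of Lemma~\ref{l:Fhsup}(iii) and Lemma~\ref{l:Vmu}(iii)) control the annular term by a sum of $c\FFh(u_h,B_{\rho_h}\setminus B_{\rho_h-\sigma/N})\le C/N$, of $c\FFh(w,B_{\rho_h}\setminus B_{\rho_h-\sigma/N})$ which vanishes as $\sigma/N\to 0$, and of $c\int F_h((\nabla\chi)\odot(u_h-w))\,dx$. Using the uniform quadratic majorization $F_h(\xi)\le C|\xi|^2$ (valid for $p\in(1,2)$ since $(\mu+\lambda_h^2|\xi|^2)^{p/2-1}\le\mu^{p/2-1}$, and on bounded ranges for $p\ge 2$ by Lemma~\ref{l:Fhsup}(i)) together with Rellich compactness $u_h\to u_\infty$ in $L^2_\loc(B_1;\R^n)$, this last integral is dominated in the limit $h\to\infty$ by $cN^2\sigma^{-2}\|u_\infty-w\|_{L^2}^2$. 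Choosing $\|u_\infty-w\|_{L^2}\ll\sigma/N$ and passing sequentially to $h\to\infty$, $N\to\infty$, $\sigma\to 0$, then $w\to u_\infty$ in $W^{1,2}$ yields $\limsup_h\FFh(u_h,B_{r-\sigma})\le\FFinf(u_\infty,B_r)$; absolute continuity of $\FFinf(u_\infty,\cdot)$ with respect to Lebesgue measure then delivers \eqref{e:Finftysupub} at a.e.\ $r$.

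The main technical difficulty is the limsup step, where the Fubini slicing, the quadratic majorization of $F_h$, and the Rellich compactness $u_h\to u_\infty$ in $L^2_\loc$ must be balanced delicately; in particular the last requires the Sobolev exponent $(p\wedge 2)^\ast$ to exceed $2$ and forces careful coordination of the scales $N$, $\sigma$ and the approximating sequence $w$.
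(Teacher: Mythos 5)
Your compactness step and your identification of $u_\infty\in W^{1,2}$ follow the same route as the paper (split by the set where $\lambda_h|\nabla u_h|$ is small/large, bound the small part in $L^2$, show the large part vanishes); this is fine.

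Your liminf argument is a genuinely different route. The paper expands $F_h$ by its Taylor lower bound from Lemma~\ref{l:Fhsup}(ii) on the good set $\{\lambda_h^{1/2}|e(u_h)|<1\}$ and invokes weak $L^2$ lower semicontinuity of the quadratic form. You instead exploit the convexity estimate of Lemma~\ref{l:Fhsup}(iv) at a smooth comparison field $w$, pass to the limit in $h$ using uniform convergence of $\lambda_h^{-1}(\nabla f_\mu(\mathbb{A}_h+\lambda_h e(w))-\nabla f_\mu(\mathbb{A}_h))$ paired with weak $L^{p\wedge2}$ convergence of $e(u_h-w)$, and then let $w\to u_\infty$ in $W^{1,2}$. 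Both routes work; yours avoids the cut-off set and the modulus $\omega$ but spends the $\Gamma$--$\liminf$ on a convexity pairing rather than on semicontinuity.

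Your limsup argument, however, has two genuine gaps, and it is exactly here that the paper's proof does something nontrivial. First, the ``uniform quadratic majorization $F_h(\xi)\le C|\xi|^2$'' is false for $p>2$: combining Lemma~\ref{l:Fhsup}(iii) with Lemma~\ref{l:Vmu}(iv) yields only
$F_h(\xi)\le c\bigl(|\xi|^2+\lambda_h^{p-2}|\xi|^p\bigr)$,
and the extra $\lambda_h^{p-2}|\xi|^p$ contribution from the cut-off term $(\nabla\chi)\odot(u_h-w)$ does not disappear from a boundedness argument alone. The paper disposes of it by first showing $\lambda_h^{1-2/p}u_h\to0$ strongly in $L^p(B_1;\R^n)$ (via a.e.\ convergence plus weak $W^{1,p}$ compactness of $\lambda_h^{1-2/p}u_h$), and this ingredient is absent from your proposal. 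Second, and more seriously, your use of ``Rellich compactness $u_h\to u_\infty$ in $L^2_{\loc}(B_1;\R^n)$'' is unavailable when $p<\frac{2n}{n+2}$: then $(p\wedge2)^*=p^*<2$ and the embedding $W^{1,p}\hookrightarrow L^2$ fails, yet the theorem is claimed for all $p>1$ and all $n\ge2$. You flag this yourself as ``requires the Sobolev exponent $(p\wedge2)^*$ to exceed $2$'', but that restriction is not a hypothesis of the theorem, so the argument as written proves a weaker statement. This is precisely the obstruction the paper circumvents in the sub-quadratic case by a Lipschitz-truncation argument (producing a set $F_j$ with $|F_j|\lesssim\lambda_h^2$ and splitting $u_h-u_\infty$ into a uniformly $W^{1,2}$-bounded Lipschitz part and a $\lambda_h^{1-2/p}$-weighted $W^{1,p}$-small remainder), which yields $\lambda_h^{-2}\int|V_\mu(\lambda_h(u_h-u_\infty))|^2\to0$ without ever invoking strong $L^2$ convergence of $u_h$. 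You would need to import something of that kind to close the sub-quadratic limsup for small $p$.
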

% \begin{remark}
%  Actually, inequality \eqref{e:Finftysupub} hold for all $r\in(0,1]$ and Theorem~\ref{t:Gammasup}
%  can be turned into a $\Gamma$-convergence statement. This follows from the arguments used for the 
%  proof of Theorem~\ref{t:Gammasup} and the equi-integrability of $(\lambda_h^{-2}|V_\mu(\lambda_he(u_h))|^2)_h$ 
%  consequence of the up to the boundary higher integrability Gehring type result by...
% \end{remark}
\begin{proof}%[Proof of Theorem~\ref{t:Gammasup}]
First, we notice that, up to the extraction of a subsequence not relabeled for convenience, there exists 
$u_\infty\in W^{1,p\wedge 2}(B_1;\R^n)$ such that $(u_h)_h$ converges weakly in $W^{1,p\wedge 2}(B_1;\R^n)$ 
to $u_\infty$ with
\begin{equation}\label{e:Fhuh4}
 \fint_{B_1} u_\infty dx=\fint_{B_1}\nabla u_\infty dx=0.
\end{equation}
Indeed, for $p\geq 2$ from \eqref{e:Fhuh} we deduce that 
$\sup_h\|e(u_h)\|_{L^2(B_1;\R^n)}\leq c\,\mu^{1-\sfrac p2}$, thus the Korn's inequality, 
Poincar\`e inequality, and the fact that $u_h$ and its gradient have null mean value (cf. \eqref{e:Fhuh}) 
provide the conclusion.
{We observe that \eqref{e:Fhuh} also implies that $e(\lambda_h^{1-\sfrac2p}u_h)$ is bounded in $L^p(B_1;\R^{n\times n})$; hence, possibly after extracting a further subsequence, we can assume that
$\lambda_h^{1-\sfrac2p}u_h$ converges weakly in $W^{1,p}(B_1;\R^n)$ and pointwise almost everywhere to some function $z$. Since $u_h$ converges pointwise to $u_\infty$, we deduce that $z=0$, and in particular 
$\lambda_h^{1-\sfrac2p}u_h\to0$ in $L^p(B_1;\R^n)$}.

Instead, in case $p\in(1,2)$, we first note that as $\lambda_h\in(0,1)$  we have
\[
\int_{B_1}|V_\mu({e( u_h)})|^2dx\leq\int_{B_1}\lambda_h^{-2}|V_\mu(\lambda_h {e( u_h)})|^2dx\,,
\]
{so that (\ref{e:Fhuh}) implies
$\sup_h\|{e( u_h)}\|_{L^p(B_1;\R^n)}<+\infty$.} 
Arguing as in the previous case we establish the claimed result.

Next, we prove separately \eqref{e:Finftysuplb} and \eqref{e:Finftysupub} in the super-quadratic and in 
the sub-quadratic case.
\smallskip

\noindent\emph{The super-quadratic case {$p>2$}.} We first prove the lower bound inequality for $r\in(0,1]$. 
 Set $E_h:=\{\lambda_h^{\sfrac 12}|e(u_h)|\geq 1\}$, then $\calL^n(E_h)\downarrow 0$ and 
 $e(u_h)\chi_{E_h^c}{\rightharpoonup} e(u_\infty)$ weakly in $L^2(B_1;\R^{n\times n})$ as $h\uparrow\infty$. 
 Therefore, by (ii) in Lemma~\ref{l:Fhsup} 
 \begin{multline*}
  \FFh(u_h,B_r)\geq\int_{B_r\cap E_h^c}\Fh(e(u_h))dx\geq
   \int_{B_r\cap E_h^c}\big(\Finf(e(u_h))
  -\omega(\lambda_h^{\sfrac 12}{+|\mathbb{A}_h-\mathbb{A}|})|e(u_h)|^2\big)dx\\
    \geq\int_{B_r}\Finf(e(u_h)\chi_{E_h^c})dx
  -\omega(\lambda_h^{\sfrac 12}{+|\mathbb{A}_h-\mathbb{A}|})\int_{B_1}|e(u_h)|^2dx,
 \end{multline*}
and thus by $L^2$ weak lower semicontinuity of $\FFinf(\cdot,B_r)$ we conclude \eqref{e:Finftysuplb}.
% \[
%  \liminf_{h\uparrow\infty}\FFh(u_h,B_r)\geq\FFinf(u_\infty,B_r).
% \]

To prove the upper bound for all but countably many $r\in(0,1)$, we note that by Urysohn's property it 
suffices to show that for every subsequence $h_k\uparrow\infty$ we can extract $h_{k_j}\uparrow\infty$ such that
\[
\limsup_{j\uparrow\infty}\mathscr{F}_{h_{k_j}}(u_{h_{k_j}},B_r)\leq \FFinf(u_\infty,B_r). 
\]
By Friederich's theorem there exists $z_j\in C^\infty(\overline{B_1};\R^n)$ such that $z_j\to u_\infty$ 
in $W^{1,2}(B_1;\R^n)$. Hence, given $h_k\uparrow\infty$ we can extract $h_{k_j}$ such that
\[
 \lim_{j\uparrow\infty}\lambda_{h_{k_j}}^{p-2}\int_{B_1}\big(|\nabla z_j|^p+|z_j|^p\big)dx=0,
\]
and the measures 
{$\nu_j:=\lambda_{h_{k_j}}^{-2}|V_\mu\big(\lambda_{h_{k_j}} e(u_{h_{k_j}})\big)|^2
%+|V_\mu(\lambda_{h_{k_j}}u_{h_{k_j}})|^2\big)
\calL^n\res B_1$} converge 
weakly$^\ast$ in $B_1$ to some finite measure $\nu$.% (cf. \eqref{e:Fhuh2}, \eqref{e:Fhuh3}). 
%{$\mu$ è già la costante in $f_\mu$, possiamo usare $m$? FI} \comment{Ho introdotto 
%$\nu_j$, $\nu$ al posto di $\mu_j$, $\mu$. MF} 
% In addition, $\mu=\zeta\calL^n\res B_1$ for some $\zeta\in L^1(B_1)$ 
% in view of the local equi-integrability of 
% $(\lambda_{h_{k_j}}^{-2}|V_\mu(\lambda_{h_{k_j}}\nabla u_{h_{k_j}})|^2)_j$
% (cf. Corollary~\ref{c:equint}).

Let now $\rho\in(0,r)$ be fixed, let $\varphi\in \mathrm{Lip}\cap C_c(B_r;[0,1])$ be such 
that $\varphi|_{B_\rho}=1$ and $\|\nabla\varphi\|_{L^\infty(B_1;\R^n)}\leq 2(r-\rho)^{-1}$ and set
\[
 w_j:=\varphi z_j+(1-\varphi)u_{h_{k_j}}.
\]
Then, $w_j\in u_{h_{k_j}}+W^{1,2}_0(B_1;\R^n)$ with 
$w_j\to u{_\infty}$ in $L^2(B_1;\R^n)$. Therefore, by local minimality
of $u_{h_{k_j}}$ we get
\[
 \mathscr{F}_{h_{k_j}}(u_{h_{k_j}},B_r)\leq
 \mathscr{F}_{h_{k_j}}(w_j,B_r)=\int_{B_\rho}{F}_{h_{k_j}}(e(z_j))dx
 +\int_{B_r\setminus B_\rho}{F}_{h_{k_j}}(e(w_j))dx.
\]
Clearly, by {generalized} Lebesgue dominated convergence theorem
\[
 \limsup_{j\uparrow\infty}\int_{B_\rho}{F}_{h_{k_j}}(e(z_j))dx\leq \int_{B_\rho}{F}_\infty(e(u_{\infty}))dx,
\]
and by items (ii) and (iii) in Lemma~\ref{l:Vmu} %and in Lemma~\ref{l:Fhsup}
\begin{multline*}
\int_{B_r\setminus B_\rho}{F}_{h_{k_j}}(e(w_j))dx
\leq\frac c{\lambda_{h_{k_j}}^2}\int_{B_r\setminus B_\rho}|V_\mu(\lambda_{h_{k_j}}e(w_j))|^2dx\\
\leq \frac c{\lambda_{h_{k_j}}^2}\int_{B_r\setminus B_\rho}\big(|V_\mu(\lambda_{h_{k_j}}e(u_{h_{k_j}}))|^2
+|V_\mu(\lambda_{h_{k_j}}e(z_j))|^2+|V_\mu(\lambda_{h_{k_j}}\nabla\varphi\odot(u_{h_{k_j}}-z_j))|^2\big)dx\\
\leq c\,\nu_j(B_r\setminus \overline{B_\rho})
+c\int_{B_r\setminus B_\rho}\big(|e(z_j)|^2+\lambda_{h_{k_j}}^{p-2}|e(z_j)|^p\big)dx\\
+\frac c{(r-\rho)^p}\int_{B_r\setminus B_\rho}\big(|u_{h_{k_j}}-z_j|^2+\lambda_{h_{k_j}}^{p-2}|{u_{h_{k_j}}-}z_j|^p\big)dx. 
\end{multline*}
 Summarizing, if $r\in(0,1)$ and $\rho\in(0,r)$ are chosen such that 
 $\nu(\partial B_r)=\nu(\partial B_\rho)=0$,
 {recalling that $u_h\to u$, $z_j\to u$ in $L^2(B_1;\R^n)$, and that $\lambda_h^{1-\sfrac2p}u_h\to0$, $\lambda_{h_{k_j}}^{1-\sfrac2p}w_j\to0$ in $L^p(B_1;\R^n)$,}
 we have
\[%\begin{multline*}
\limsup_{j\uparrow\infty}\int_{B_r\setminus B_\rho}{F}_{h_{k_j}}(e({w_j}))dx\leq c\,\nu(B_r\setminus\overline{B_\rho})
+c\int_{B_r\setminus B_\rho}|e({u_\infty})|^2dx. %+c\,\mu^{\frac p2-1}\calL^n(B_1\setminus B_\rho).
\]%\end{multline*}
Thus, if $\rho_l\uparrow r$, we conclude at once by an easy diagonalization argument.
\medskip

% \end{proof}
% 
% \subsection{The sub-quadratic case}
% 
% If $p\in(1,2)$ the proof of the result is similar, though some additional difficulties arise.
% 
% \begin{theorem}\label{t:Gammasub}
% Let %$\Gamma(L^2)\mbox{-}\lim_{h\uparrow\infty}\FFh=\mathscr{F}_\infty$, where 
% $\FFinf:L^2(B_1,\R^n)\times\mathcal{A}(B_1)\to[0,\infty]$ 
% be given by 
% \begin{equation}\label{e:Finftysup}
%  \FFinf(u;A)=\frac12\int_{A}
%  \langle\nabla^2f_\mu(\mathbb{A})e(u),e(u)\rangle\,dx
%  \end{equation}
% if $u\in W^{1,2}(B_1,\R^n)$ and $+\infty$ otherwise. 
% 
% Then, if $\FFh$ are the functionals in \eqref{e:Fh} and $(u_h)_h$ is the sequence 
% in \eqref{e:Fhuh} which converges weakly in $W^{1,p}(B_1,\R^n)$ to some function 
% $u$, we have $u\in W^{1,2}(B_1,\R^n)$ and
% \begin{equation}\label{e:Finftysublb}
% \liminf_{h\uparrow\infty}\FFh(u_h,B_r)\geq\FFinf(u,B_r)\quad\textrm{for all $r\in(0,1]$}
%  \end{equation}
% and
% \begin{equation}\label{e:Finftysubub}
% \limsup_{h\uparrow\infty}\FFh(u_h,B_r)\leq\FFinf(u,B_r)\quad\textrm{for all $r\in(0,1)$}
% \end{equation}
% \end{theorem}
% \begin{proof}

\noindent\emph{The sub-quadratic case {$p\le 2$}.}
 We first prove that $u_\infty\in W^{1,2}(B_1;\R^n)$. 
 Set $E_h:=\{\lambda_h^{\sfrac 12}|e(u_h)|\geq 1\}$, then $\calL^n(E_h)\downarrow 0$ as $h\uparrow\infty$ and 
 \[
 (\mu+1)^{\sfrac p2-1} \int_{E_h^c}|e(u_h)|^2dx\leq \|\lambda_h^{-1}V_\mu(\lambda_he(u_h))\|_{L^2(B_1;\R^n)}^2.
 \]
Therefore, up to a subsequence not relabeled, $(e(u_h)\chi_{E_h^c})_h$ converges weakly in 
$L^2(B_1;\R^{n\times n})$ to some function $\vartheta$. Moreover, as for all 
$\varphi\in L^{\frac p{p-1}}(B_1;\R^{n\times n})$, $\varphi\chi_{E_h^c}\to \varphi$ in 
$L^{\frac p{p-1}}(B_1;\R^{n\times n})$, from the weak convergence of $(e(u_h))_h$ to $e(u_\infty)$ 
in $L^p(B_1;\R^{n\times n})$ we conclude
\[%\begin{multline*}
\int_{B_1}\langle \vartheta,\varphi\rangle dx=\lim_{h\uparrow\infty}\int_{B_1}\langle e(u_h)\chi_{E_h^c},\varphi\rangle dx
 =\lim_{h\uparrow\infty}\int_{B_1}\langle e(u_h),\varphi\chi_{E_h^c}\rangle dx=\int_{B_1}\langle e(u_\infty),\varphi\rangle dx,
\]%\end{multline*}
in turn implying $\vartheta=e(u_\infty)$ $\calL^n$ a.e. in $B_1$. 
Thus, by \eqref{e:Fhuh4}, Korn's inequality yields that $u_\infty\in W^{1,2}(B_1;\R^n)$. 

The lower bound inequality in \eqref{e:Finftysuplb} for $r\in(0,1]$ follows 
by arguing exactly as to derive it in case $p\geq 2$.
 
If $p\in(1,2)$ the proof of \eqref{e:Finftysupub} is similar to the super-quadratic case, though some 
additional difficulties arise. With fixed $r\in(0,1)$, by Urysohn's property it is sufficient to show 
that for every subsequence $h_k\uparrow\infty$ we can extract $h_{k_j}\uparrow\infty$ such that
\[
\limsup_{j\uparrow\infty}\mathscr{F}_{h_{k_j}}(u_{h_{k_j}},B_r)\leq \FFinf(u_{\infty},B_r). 
\]
Given a sequence $h_k\uparrow\infty$ we can find a subsequence $h_{k_j}$ and some finite measure $\nu$,
such that the measures 
$\nu_j:=\lambda_{h_{k_j}}^{-2}|V_\mu(\lambda_{h_{k_j}}e(u_{h_{k_j}}))|^2\calL^n\res B_1$ 
converge weakly$^\ast$ on $B_1$ to $\nu$. 
% with $\mu=\zeta\calL^n\res B_1$ for some $\zeta\in L^1(B_1)$ in view of the equi-integrability of
% $(\lambda_{h_{k_j}}^{-2}|V_\mu(\lambda_{h_{k_j}}\nabla u_{h_{k_j}})|^2)_j$.

Let now $\rho\in(0,r)$ and $\varphi\in \mathrm{Lip}\cap C_c(B_r;[0,1])$ be such that 
$\varphi|_{B_\rho}=1$ and $\|\nabla\varphi\|_{L^\infty(B_r;\R^n)}\leq 2({r}-\rho)^{-1}$ and set
\[
 w_j:=\varphi u_{\infty}+(1-\varphi)u_{h_{k_j}}.
\]
Then, $w_j\in u_{h_{k_j}}+W^{1,2}_0(B_1;\R^n)$ with {$w_j\to u_\infty$} in $L^p(B_1;\R^n)$. 
Moreover,
\[
\mathscr{F}_{h_{k_j}}(u_{h_{k_j}},B_r)\leq
\mathscr{F}_{h_{k_j}}(w_j,B_r)=\int_{B_\rho}{F}_{h_{k_j}}(e(u_{\infty}))dx+\int_{B_r\setminus B_\rho}{F}_{h_{k_j}}(e(w_j))dx.
\]
Clearly, by Lebesgue dominated convergence theorem
\[
 \limsup_{j\uparrow\infty}\int_{B_\rho}{F}_{h_{k_j}}(e(u_{\infty}))dx\leq \int_{B_\rho}{F}_\infty(e(u_{\infty}))dx,
\]
and by item (iii) both in Lemma~\ref{l:Vmu} and in Lemma~\ref{l:Fhsup} 
\begin{multline*}
\int_{B_r\setminus B_\rho}{F}_{h_{k_j}}(e(w_j))dx
\leq\frac c{\lambda_{h_{k_j}}^2}\int_{B_r\setminus B_\rho}|V_\mu(\lambda_{h_{k_j}}e(w_j))|^2dx\\
\leq\frac c{\lambda_{h_{k_j}}^2}\int_{B_r\setminus B_\rho}|V_\mu(\lambda_{h_{k_j}}e(u_{h_{k_j}}))|^2dx
+\frac c{\lambda_{h_{k_j}}^2}\int_{B_r\setminus B_\rho}|V_\mu(\lambda_{h_{k_j}}e(u_{\infty}))|^2dx\\
+\frac c{({r}-\rho)^{2}\lambda_{h_{k_j}}^2}
\underbrace{\int_{B_r\setminus B_\rho}|V_\mu(\lambda_{h_{k_j}}(u_{h_{k_j}}-u_{\infty}))|^2dx}_{I_j:=}\\
\leq c\,\nu_j(B_r\setminus\overline{B_\rho})+c\,\int_{B_r\setminus B_\rho}|e(u_{\infty})|^2dx
+\frac c{({r}-\rho)^{2}}\lambda_{h_{k_j}}^{-2}I_j.
\end{multline*}
{In order to estimate the last term we use a Lipschitz truncation in order to use Rellich's theorem separately on the part with quadratic growth and on the one with $p$-growth. Precisely, let
$E_j:=\{\lambda_{h_{k_j}}|\nabla(u_{h_{k_j}}-u_{\infty})|> 1\}$. Then there is a set $F_j$ with $E_j\subset F_j\subset B_1$
such that $\lambda_{h_{k_j}}(u_{h_{k_j}}-u_{\infty})$ is $c$-Lipschitz in $B_1\setminus F_j$ and \cite[Theorem 3, Section 6.6.3]{EvansGariepy}
\begin{equation*}
 |F_j|\le c \lambda_{h_{k_j}}^p \int_{E_j} |\nabla(u_{h_{k_j}}-u_{\infty})|^pdx \le  c  \int_{E_j} |V_\mu(\lambda_{h_{k_j}}\nabla(u_{h_{k_j}}-u_{\infty}))|^2dx \le c \lambda_{h_{k_j}}^2.
\end{equation*}
Let $w_j$ be a $c\lambda_{h_{k_j}}^{-1}$-Lipschitz extension of $u_{h_{k_j}}-u_{\infty}|_{B_1\setminus F_j}$. 
We estimate
\begin{equation*}
 \int_{B_1} |\nabla w_j|^2dx \le c \lambda_{h_{k_j}}^{-2} |F_j| + \int_{B_1\setminus F_j} \lambda_{h_{k_j}}^{-2}  |V_\mu(\lambda_{h_{k_j}}\nabla(u_{h_{k_j}}-u_{\infty}))|^2dx \le c.
\end{equation*}
Therefore $(w_j)_j$ is bounded in $W^{1,2}(B_1;\R^n)$, and, since it converges (up to a subsequence) pointwise almost everywhere to zero, it converges also strongly in $L^2(B_1;\R^n)$ to zero.
Consider now the difference $d_j=u_{h_{k_j}}-u_{\infty}-w_j$. We estimate
\begin{multline*}
 \int_{B_1} |\nabla d_j|^pdx \le c \int_{E_j}  |\nabla(u_{h_{k_j}}-u_{\infty})|^pdx + c |F_j| \lambda_{h_{k_j}}^{-p}\\
 \le 
 c  \int_{E_j} \lambda_{h_{k_j}}^{-p} |V_\mu(\lambda_{h_{k_j}}\nabla(u_{h_{k_j}}-u_{\infty}))|^2dx 
 +
 c \lambda_{h_{k_j}}^{2-p}\le 
 c \lambda_{h_{k_j}}^{2-p}.
\end{multline*}
Therefore $(\lambda_{h_{k_j}}^{1-2/p}d_j)_j$ is bounded in $W^{1,p}(B_1;\R^n)$ and converges in measure to zero, hence it converges also strongly in $L^p(B_1;\R^n)$ to zero.
We finally estimate, recalling that for $p\le 2$ we have $|V_\mu(\xi)|^2\le c (|\xi|^2\wedge |\xi|^p)$, 
\begin{align*}
\lambda_{h_{k_j}}^{-2} \int_{B_1}&|V_\mu(\lambda_{h_{k_j}}(u_{h_{k_j}}-u_{\infty}))|^2dx
 \le c\lambda_{h_{k_j}}^{-2}\int_{B_1}|V_\mu(\lambda_{h_{k_j}}w_{h_{k_j}})|^2dx\\&+c\lambda_{h_{k_j}}^{-2}\int_{F_j}|V_\mu(\lambda_{h_{k_j}}d_{h_{k_j}})|^2dx
 \le c\int_{B_1}|w_{h_{k_j}}|^2dx+ c \int_{B_1} \lambda_{h_{k_j}}^{p-2}|d_{h_{k_j}}|^pdx
\end{align*}
and see that each term in the right-hand side converges to zero.
}

Therefore, we deduce that 
\[
 \limsup_{j\uparrow\infty}\lambda_{h_{k_j}}^{-2}I_j=0.
\]
Thus, in conclusion provided $r\in(0,1)$ and $\rho\in(0,r)$ are such that 
$\nu(\partial B_r)=\nu(\partial B_\rho)=0$ we have
\[%\begin{multline*}
\limsup_{j\uparrow\infty}\int_{B_r\setminus B_\rho}{F}_{h_{k_j}}(e(w_j))dx\leq 
c\,\nu(B_r\setminus \overline{B_\rho})+c\int_{B_r\setminus B_\rho}|e(u_{\infty})|^2dx. 
%+c\,\mu^{\frac p2-1}\calL^n(B_1\setminus B_\rho).
\]%\end{multline*}
 Thus, if $\rho_l\uparrow r$, we conclude by an easy diagonalization argument.
\end{proof}

We next deduce that $u_\infty$ is actually the solution of a linear elliptic system.
\begin{corollary}\label{c:pde}
 The limit function $u_\infty\in W^{1,2}(B_1;\R^n)$ satisfies  \begin{equation}\label{e:pdeinfty}
\int_{B_1}%{\lambda_h^{-1}}
\langle \nabla^2 f_\mu(\mathbb{A}_\infty) e(u_\infty),e(\varphi)\rangle dx=0
 \end{equation}
 for all $\varphi\in C_c^\infty(B_1;\R^n)$.
\end{corollary}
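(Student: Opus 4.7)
The plan is to pass to the limit as $h\uparrow\infty$ in the Euler--Lagrange system associated with $u_h$. Since $u_h$ is a local minimizer of $\mathscr{F}_h$ and $\nabla F_h(0)=0$, the first variation yields
\begin{equation*}
0=\int_{B_1}\langle\nabla F_h(e(u_h)),e(\varphi)\rangle\,dx=\int_{B_1}\langle H_h(x)\,e(u_h),e(\varphi)\rangle\,dx
\end{equation*}
for every $\varphi\in C_c^\infty(B_1;\R^n)$, where
\begin{equation*}
H_h(x):=\int_0^1\nabla^2 f_\mu\bigl(\mathbb{A}_h+t\lambda_h\,e(u_h)(x)\bigr)\,dt
\end{equation*}
is well defined thanks to (Reg) and $\mu>0$. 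The corollary will then follow once one shows that $\int_{B_1}\langle H_h\,e(u_h),e(\varphi)\rangle\,dx\to \int_{B_1}\langle \nabla^2 f_\mu(\mathbb{A}_\infty)\,e(u_\infty),e(\varphi)\rangle\,dx$.

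To carry this out I would fix $\delta>0$ and introduce the truncation $E_h^\delta:=\{x\in B_1:\lambda_h|e(u_h)(x)|>\delta\}$, eventually sending $\delta\downarrow 0$ after $h\uparrow\infty$. On $B_1\setminus E_h^\delta$ the argument of $\nabla^2 f_\mu$ stays in the compact set $\{|\xi|\le \sup_h|\mathbb{A}_h|+\delta\}$; by uniform continuity of $\nabla^2 f_\mu$ there (which requires $\mu>0$) together with $\mathbb{A}_h\to\mathbb{A}_\infty$ one obtains
\begin{equation*}
\|H_h-\nabla^2 f_\mu(\mathbb{A}_\infty)\|_{L^\infty(B_1\setminus E_h^\delta)}\le \omega(\delta+|\mathbb{A}_h-\mathbb{A}_\infty|)
\end{equation*}
for some modulus $\omega$ depending only on $\mu,p$ and $\sup_h|\mathbb{A}_h|$. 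Paired with the weak convergence $e(u_h)\rightharpoonup e(u_\infty)$ in $L^{p\wedge 2}(B_1;\R^{n\times n}_\sym)$ established in Theorem~\ref{t:Gammasup}, the contribution from $B_1\setminus E_h^\delta$ then converges to the desired limit up to an $O(\delta)$ error.

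The main obstacle is to verify that the contribution on $E_h^\delta$ vanishes. In the subquadratic case $p\in(1,2]$ this is immediate, since $|H_h|\le c\,\mu^{(p-2)/2}$ uniformly, $|E_h^\delta|\to 0$ by Chebyshev, and $e(u_h)$ is bounded in $L^p(B_1;\R^{n\times n}_\sym)$ via Lemma~\ref{l:Vmu}(iv) and \eqref{e:Fhuh}. In the superquadratic case $p>2$ the Hessian is unbounded and a finer argument is needed: Lemma~\ref{l:Vmu}(iv) combined with \eqref{e:Fhuh} yields both $\|e(u_h)\|_{L^2(B_1)}\le C$ and $\lambda_h^{p-2}\|e(u_h)\|_{L^p(B_1)}^p\le C$, so that $|E_h^\delta|\le c(\delta)\lambda_h^2\to 0$. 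Bounding the integrand by $|H_h\,e(u_h)|\le c\bigl[(1+|\mathbb{A}_h|^2)^{(p-2)/2}|e(u_h)|+\lambda_h^{p-2}|e(u_h)|^{p-1}\bigr]$ and applying H\"older, the contribution on $E_h^\delta$ is controlled by $c\,\|e(\varphi)\|_\infty\bigl(|E_h^\delta|^{1/2}\|e(u_h)\|_{L^2}+\lambda_h^{p-2}|E_h^\delta|^{1/p}\|e(u_h)\|_{L^p}^{p-1}\bigr)$; a careful counting of powers gives $\lambda_h^{p-2}\|e(u_h)\|_{L^p}^{p-1}\le C\,\lambda_h^{(p-2)/p}\to 0$, and both terms therefore vanish. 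Sending $\delta\downarrow 0$ after passing to the limit in $h$ identifies the desired weak limit and yields \eqref{e:pdeinfty}.
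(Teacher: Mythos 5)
Your proof is correct and follows essentially the same strategy as the paper's: write the Euler--Lagrange system for $u_h$, split $B_1$ into the set where $\lambda_h|e(u_h)|$ is small and its complement, pass to the limit on the good set using boundedness of the Hessian and weak convergence of $e(u_h)$, and show the bad-set contribution vanishes via a measure bound combined with the $V_\mu$-energy bound in \eqref{e:Fhuh}. The only cosmetic differences are that the paper fixes the threshold at $\sqrt{\mu}$ and invokes dominated convergence (via $\lambda_h e(u_h)\to 0$ a.e.) on the good set, while you use a $\delta$-threshold sent to zero after $h$ together with a uniform-continuity modulus for $\nabla^2 f_\mu$; both are equivalent here.
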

\begin{proof}
 Being $u_h$ a local minimizer of $\FFh$, for all $\varphi\in C_c^\infty(B_1;\R^n)$ it holds
\[
\lambda_h^{-1}\int_{B_1}\langle\nabla f_\mu(\mathbb{A}_h+\lambda_h e(u_h))
-\nabla f_\mu(\mathbb{A}_h),e(\varphi)\rangle dx=0.
\]
Consider the sets 
\[
 E_h^+:=\{x\in B_1:\,|\lambda_h e(u_h)|\geq \sqrt{\mu}\},\quad 
 E_h^-:=\{x\in B_1:\,|\lambda_h e(u_h)|<\sqrt{\mu}\}.
\]
By the weak convergence of $(u_h)_h$ to $u_\infty$ in $W^{1,p\wedge 2}(B_1;\R^n)$, 
we get
\begin{equation}\label{e:measEhp}
 \calL^n(E_h^+)\leq {\mu^{-\sfrac p2\wedge 1}}\int_{B_1}|\lambda_h e(u_h)|^{p\wedge 2}dx\leq c\lambda_h^{p\wedge 2},
\end{equation}
so that $\calL^n(E_h^+)=o(\lambda_h)$ as $h\uparrow\infty$. Hence, %Moreover, by \eqref{e:Fhuh} and  
we deduce that
\begin{align*}
\limsup_{h\uparrow\infty} &\left|\lambda_h^{-1}\int_{E_h^+}\langle\nabla f_\mu(\mathbb{A}_h+\lambda_h e(u_h))
-\nabla f_\mu(\mathbb{A}_h),e(\varphi)\rangle dx\right|\\
&\stackrel{\eqref{e:growth condition nabla}}{\leq}
\limsup_{h\uparrow\infty}\Big(c\,\frac{\calL^n(E_h^+)}{\lambda_h}+c\lambda_h^{p-2}\int_{E_h^+}|e(u_h)|^{p-1}dx\Big)\\
&\leq c\limsup_{h\uparrow\infty}\lambda_h^{p-2}(\calL^n(E_h^+))^{\sfrac 1p}\Big(\int_{E_h^+}|e(u_h)|^pdx\Big)^{\sfrac{(p-1)}p}\\
&\leq c\limsup_{h\uparrow\infty}\lambda_h^{1-\frac2p}(\calL^n(E_h^+))^{\sfrac 1p}
\Big(\int_{E_h^+}\lambda_h^{-2}|V_\mu(\lambda_he(u_h))|^2dx\Big)^{\sfrac{(p-1)}p}\\
&\stackrel{\eqref{e:Fhuh},\,\eqref{e:measEhp}}{\leq}  c\limsup_{h\uparrow\infty}\lambda_h^{(2-\frac2p)\wedge 1}=0,
\end{align*}
by taking into account item (iv) in Lemma~\ref{l:Vmu} to infer 
the last but one inequality. Finally, note that 
\begin{multline*}
\lambda_h^{-1}\int_{E_h^-}\langle\nabla f_\mu(\mathbb{A}_h+\lambda_h e(u_h))
-\nabla f_\mu(\mathbb{A}_h),e(\varphi)\rangle dx\\
=\int_{E_h^-}\langle\Big(\int_0^1\nabla^2 f_\mu(\mathbb{A}_h+t\lambda_h e(u_h))dt\Big)
e(u_h),e(\varphi)\rangle dx,
\end{multline*}
then as $(u_h)_h$ converges weakly to $u_\infty$ in $W^{1,p\wedge 2}(B_1;\R^n)$, 
$\lambda_he(u_h)\to 0$ $\calL^n$ a.e. on $B_1$, and as $f_\mu\in C^2(\R^{n\times n}_\sym)$ 
if $\mu>0$, by the dominated convergence theorem we get
\[%\begin{multline*}
\lim_{h\uparrow\infty}\lambda_h^{-1}\int_{E_h^-}\langle\nabla f_\mu(\mathbb{A}_h+\lambda_h e(u_h))
-\nabla f_\mu(\mathbb{A}_h),e(\varphi)\rangle dx
=\int_{B_1}\langle \nabla^2 f_\mu(\mathbb{A}_\infty) e(u_\infty),e(\varphi)\rangle dx.
\]%\end{multline*}
\end{proof}

In turn, this last result provides the claimed local strong convergence.
\begin{corollary}\label{c:strongconv}
 Let $(u_h)_h$ be the sequence in \eqref{e:Fhuh} converging weakly in $W^{1,p\wedge 2}(B_1;\R^n)$ 
 to the function $u_\infty\in W^{1,2}(B_1;\R^n)$. Then, for all $r\in(0,1)$
 \[
  \lim_{h\uparrow\infty}\int_{B_r}\lambda_h^{-2}|V_\mu(\lambda_h e(u_h-u_\infty)|^2dx=0.
 \]
In particular, $(u_h)_h$ converges to $u_\infty$ in $W^{1,p\wedge 2}_\loc(B_1;\R^n)$.
\end{corollary}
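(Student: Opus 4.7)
The plan is to combine the uniform convexity of $F_h$ (Lemma~\ref{l:Fhsup}(iv), second form) with the convergence of energies from Theorem~\ref{t:Gammasup}, exploiting the smoothness of $u_\infty$ granted by Corollary~\ref{c:pde}. Since the conclusion is monotone in $r$, it is enough to establish it for a dense set of radii. First, I would fix $r\in(0,1)$ such that $\FFh(u_h,B_r)\to\FFinf(u_\infty,B_r)$: Theorem~\ref{t:Gammasup} provides the lower bound for every $r$ and the matching upper bound for $\calL^1$-a.e.\ $r$. Next, I would invoke Corollary~\ref{c:pde}: since $u_\infty$ weakly solves a constant-coefficient linear system with principal part Legendre-elliptic on symmetric matrices (by \eqref{e:bdhessf}), standard elliptic regularity gives $u_\infty\in C^\infty_\loc(B_1;\R^n)$, so that $e(u_\infty)\in L^\infty(B_r)$ and $\lambda_h|e(u_\infty)|\le\mu$ on $B_r$ for all $h$ sufficiently large, making the second form of Lemma~\ref{l:Fhsup}(iv) applicable with $\eta=e(u_\infty)$.

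For such $h$ I would apply the pointwise inequality from Lemma~\ref{l:Fhsup}(iv) with $\xi=e(u_h)$ and $\eta=e(u_\infty)$ and integrate on $B_r$ to get
\begin{equation*}
\FFh(u_h,B_r)-\int_{B_r}F_h(e(u_\infty))\,dx\ge \frac{c}{\lambda_h^2}\int_{B_r}|V_\mu(\lambda_h\,e(u_h-u_\infty))|^2\,dx+R_h,
\end{equation*}
with $R_h:=\int_{B_r}\lambda_h^{-1}\langle\nabla f_\mu(\mathbb{A}_h+\lambda_h e(u_\infty))-\nabla f_\mu(\mathbb{A}_h),\,e(u_h-u_\infty)\rangle\,dx$. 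The first left-hand term converges to $\FFinf(u_\infty,B_r)$ by the choice of $r$. The Taylor representation $F_h(e(u_\infty))=\int_0^1\langle\nabla^2 f_\mu(\mathbb{A}_h+s\lambda_h e(u_\infty))e(u_\infty),e(u_\infty)\rangle(1-s)\,ds$, combined with $\lambda_h e(u_\infty)\to 0$ uniformly on $B_r$ and the continuity of $\nabla^2 f_\mu$, yields $F_h(e(u_\infty))\to F_\infty(e(u_\infty))$ in $L^\infty(B_r)$, so the second left-hand term also tends to $\FFinf(u_\infty,B_r)$.

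The main obstacle is to show $R_h\to 0$. Writing $R_h=\int_{B_r}\mathbb{B}_h e(u_\infty)\cdot e(u_h-u_\infty)\,dx$ with $\mathbb{B}_h:=\int_0^1\nabla^2 f_\mu(\mathbb{A}_h+s\lambda_h e(u_\infty))\,ds$, the same uniform convergence argument gives $\mathbb{B}_h e(u_\infty)\to\nabla^2 f_\mu(\mathbb{A}_\infty)e(u_\infty)$ strongly in $L^\infty(B_r)$, and hence in every $L^q(B_r)$. Since $u_h\rightharpoonup u_\infty$ in $W^{1,p\wedge 2}$, $e(u_h-u_\infty)\rightharpoonup 0$ weakly in $L^{p\wedge 2}(B_r)$, and $R_h\to 0$ follows by weak-strong pairing. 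Passing to the limit in the displayed inequality then yields $\lim_h\lambda_h^{-2}\int_{B_r}|V_\mu(\lambda_h\,e(u_h-u_\infty))|^2\,dx=0$.

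For the ``in particular'' part, the strong $W^{1,p\wedge 2}_\loc$ convergence is extracted by unwrapping $V_\mu$. If $p\ge 2$, the elementary inequality $|V_\mu(\xi)|^2\ge\mu^{(p-2)/2}|\xi|^2$ immediately provides $L^2(B_r)$ convergence of $e(u_h-u_\infty)$. If $p\in(1,2)$, Lemma~\ref{l:Vmu}(iv) combined with a case split on the sets $\{\lambda_h|e(u_h-u_\infty)|\le\mu^{1/2}\}$ and its complement produces $L^2$ control on the first set and weighted $L^p$ control (with weight $\lambda_h^{p-2}\to\infty$) on the second, whence H\"older's inequality gives $L^p(B_r)$ convergence of $e(u_h-u_\infty)$. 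In both cases, Korn's inequality on a ball strictly smaller than $B_r$, together with the weak convergence $u_h\rightharpoonup u_\infty$ already established, upgrades this to strong convergence of $\nabla(u_h-u_\infty)$ in $L^{p\wedge 2}_\loc$.
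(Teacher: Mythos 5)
Your proposal is correct and follows essentially the same route as the paper: select a good radius via Theorem~\ref{t:Gammasup}, use the smoothness of $u_\infty$ from Corollary~\ref{c:pde} to invoke the second form of Lemma~\ref{l:Fhsup}(iv) with $\eta=e(u_\infty)$, and dispose of the linear remainder by an $L^\infty$-against-weakly-null-$L^{p\wedge 2}$ pairing. Your treatment of the ``in particular'' part (the $p\ge 2$ vs.\ $p\in(1,2)$ split via Lemma~\ref{l:Vmu}(iv) plus Korn) supplies details the paper leaves implicit, but the argument is the same in substance.
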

\begin{proof}
It is sufficient to show the conclusion for all those $r\in(0,1)$ for which both inequalities 
\eqref{e:Finftysuplb} and \eqref{e:Finftysupub} in Theorem~\ref{t:Gammasup} hold true. 
In such a case, we have  
 \[
  \lim_{h\uparrow\infty}\FFh(u_h,B_r)=\FFinf(u_\infty,B_r).
 \]
 {We observe that 
  $u_\infty\in C^\infty(B_1;\R^n)$ by 
Corollary~\ref{c:pde} and the regularity theory for linear elliptic systems. Therefore for $h$ sufficiently large we have 
$\lambda_h|e(u_\infty)|<\mu$ uniformly on $B_r$.}
By item (iv) in Lemma~\ref{l:Fhsup} we get
\begin{align*}
%\FFh(u_h,B_r)-\FFinf(u_\infty,B_r)=\FFh(u_h,B_r)-\FFh(u_\infty,B_r)+\FFh(u_\infty,B_r)-\FFinf(u_\infty,B_r)\\
\FFh(u_h,B_r)-\FFh(u_\infty,B_r)
 \geq & c\int_{B_r}\lambda_h^{-2}
 |V_\mu(\lambda_h e(u_h-u_\infty)|^2dx\\
 &+\frac{1}{\lambda_h}\int_{B_r}\langle\nabla f_\mu(\mathbb{A}_h+\lambda_h e(u_\infty))
-\nabla f_\mu(\mathbb{A}_h),e(u_h-u_\infty)\rangle dx \\
= &c\int_{B_r}\lambda_h^{-2}|V_\mu(\lambda_h e(u_h-u_\infty)|^2dx\\
&+\int_{B_r}\langle\Big(\int_0^1\nabla^2 f_\mu(\mathbb{A}_h+t\lambda_h e(u_\infty))dt\Big)e(u_\infty),
e(u_h-u_\infty)\rangle dx.
\end{align*}
Since $\FFh(u_\infty,B_r)\to\FFinf(u_\infty;B_r)$ as $h\uparrow\infty$, and 
\[
 \int_0^1\nabla^2 f_\mu(\mathbb{A}_h+t\lambda_h e(u_\infty)) e(u_\infty)dt
 \to\int_0^1\nabla^2 f_\mu(\mathbb{A}_\infty) e(u_\infty)dt
\]
in $L^\infty_\loc(B_1;\R^{n\times n})$, we conclude by the weak convergence of $(u_h)_h$ to $u_\infty$
in $W^{1,p\wedge 2}(B_1;\R^{n\times n})$. 
\end{proof}

\section*{Acknowledgments} 
This work was partially supported 
by the Deutsche Forschungsgemeinschaft through the Sonderforschungsbereich 1060 
{\sl ``The mathematics of emergent effects''}.
S.~Conti thanks the University of Florence for
the warm hospitality of the DiMaI ``Ulisse Dini'', where part of this work was 
carried out. F.~Iurlano has been partially supported by the project EMERGENCE of the Sorbonne Université 
and by a post-doctoral fellowship of the Fondation Sciences Mathématiques de Paris.
M.~Focardi and F.~Iurlano are members of the Gruppo Nazionale per
l'Analisi Matematica, la Probabilit\`a e le loro Applicazioni (GNAMPA)
of the Istituto Nazionale di Alta Matematica (INdAM).

%\section*{Appendix}

%\bibliographystyle{alpha-noname}
\bibliographystyle{siam}
\bibliography{biblio}

\end{document}